\newtheorem{theorem}{Theorem}[section]
\newtheorem{lemma}[theorem]{Lemma}
\newtheorem{definition}[theorem]{Definition}
\newtheorem{proposition}[theorem]{Proposition}
\newtheorem{corollary}[theorem]{Corollary}
\newtheorem{remark}[theorem]{Remark}
\newtheorem{example}[theorem]{Example}
\numberwithin{equation}{section}
\newcommand{\colim}{\operatorname{colim}}
\newcommand{\add}{\operatorname{add}}
\newcommand{\Aut}{\operatorname{Aut}}
\newcommand{\D}{{\operatorname{D}}}
\newcommand{\bd}{{\op{b}}}
\newcommand{\bR}{{\mathbf{R}}}
\newcommand{\bfMF}{{\mathbf{MF}}}
\newcommand{\HP}{{\operatorname{HP}}}
\newcommand{\opH}{{\operatorname{H}}}
\newcommand{\opc}{{\op{c}}}
\newcommand{\opSH}{{\operatorname{SH}}}
\newcommand{\an}{{\op{an}}}
\newcommand{\geom}{{\op{geom}}}
\newcommand{\Spec}{\operatorname{Spec}}
\newcommand{\id}{\operatorname{id}}
\newcommand{\ra}{\rightarrow}
\newcommand{\hra}{\hookrightarrow}
\newcommand{\xra}[1]{\xrightarrow{#1}}
\newcommand{\sira}{\xra{\sim}}
\newcommand{\op}[1]{{\operatorname{#1}}}
\newcommand{\DZ}{\mathbb{Z}}
\newcommand{\DN}{\mathbb{N}}
\newcommand{\DC}{\mathbb{C}}
\newcommand{\DA}{{\mathbb{A}}}
\newcommand{\DP}{{\mathbb{P}}}
\newcommand{\DL}{\mathbb{L}}
\newcommand{\DG}{{\mathbb{G}}}
\newcommand{\DGm}{{\DG_\op{m}}}
\newcommand{\Irr}{\op{Irr}}
\newcommand{\Coh}{{\op{Coh}}}
\newcommand{\ol}[1]{{\overline{#1}}}
\newcommand{\ul}[1]{{\underline{#1}}}
\newcommand{\sat}{\op{sat}}
\newcommand{\tildew}[1]{\widetilde{#1}}
\newcommand{\Var}{{\op{Var}}}
\newcommand{\define}[1]{{\textbf{#1}}}
\renewcommand{\epsilon}{{\varepsilon}}
\newcommand{\Sing}{{\op{Sing}}}
\newcommand{\sing}{{\op{sing}}}
\newcommand{\reg}{{\op{reg}}}
\newcommand{\Crit}{{\op{Crit}}}
\newcommand{\kk}{{\mathsf{k}}}
\author{Valery A.~Lunts \and Olaf M.~Schn{\"u}rer}
\address{
  Department of Mathematics\\
  Indiana University\\
  Rawles Hall\\
  831 East 3rd Street\\
  Bloomington, IN 47405\\
  USA
}
\email{vlunts@indiana.edu} 
\address{
  Mathematisches Institut\\ 
  Universit{\"a}t Bonn\\
  Endenicher Allee 60\\
  53115 Bonn\\
  Germany
}
\email{olaf.schnuerer@math.uni-bonn.de}
\title{Motivic vanishing cycles as a motivic measure}
\thanks{}
\begin{document}

\begin{abstract}
  We show that the motivic vanishing cycles introduced by J.\
  Denef and F.\ Loeser give rise to a motivic
  measure on the Grothendieck ring of varieties over the affine
  line. We discuss the relation of this motivic measure to the 
  motivic measure we constructed earlier using categories of
  matrix factorizations.
\end{abstract}

\maketitle
\setcounter{tocdepth}{1}

\tableofcontents

\section{Introduction}
\label{sec:introduction}

The motivic nearby fiber and the motivic vanishing cycles were
introduced by J.~Denef and F.~Loeser (see
\cite{denef-loeaser-motivic-igusa-zeta, denef-loeser-motivic,
  denef-loeser-arc-spaces, looijenga-motivic-measures}). Let $V
\colon X \ra \DA^1_\kk$ 
be a morphism of $\kk$-varieties where $\kk$ is an algebraically
closed field of characteristic zero and $X$ is smooth over $\kk$
and connected.
The motivic nearby fiber $\psi_{V,a}$ and the motivic vanishing
cycles $\phi_{V,a}$ of $V$
at a point $a \in \kk=\DA^1_\kk(\kk)$ are elements of
a localization $\mathcal{M}_{|X_a|}^{\hat{\upmu}}$ of the
equivariant Grothendieck ring $K_0(\Var_{|X_a|}^{\hat{\upmu}})$ 
of varieties over the reduced fiber $|X_a|$ of $V$ over $a$.
We refer the reader to the main body of this article for precise
definitions. We will often view $\psi_{V,a}$ and $\phi_{V,a}$ as
elements of $\mathcal{M}_{\kk}^{\hat{\upmu}}$ in this introduction.

The motivic nearby fiber is additive on the Grothendieck group
$K_0(\Var_{\DA^1_\kk})$ of varieties over $\DA^1_\kk$, as shown
by F.~Bittner 
\cite{bittner-motivic-nearby} 
and by G.~Guibert, F.~Loeser and M.~Merle 
\cite[Thm.~3.9]{guibert-loeser-merle-convolution}.
Namely, for any $a \in \kk$,
there is a map
\begin{equation*}
  K_0(\Var_{\DA^1_\kk}) \ra \mathcal{M}_\kk^{\hat{\upmu}}  
\end{equation*}
of $K_0(\Var_\kk)$-modules which maps the class of a proper
morphism $V \colon X \ra \DA^1_\kk$ with $X$ as above to 
the motivic nearby fiber $\psi_{V,a}$.
  
The motivic Thom-Sebastiani theorem
\cite{guibert-loeser-merle-convolution} 
is a
local multiplicativity result for motivic vanishing cycles. Given
another morphism $W \colon Y \ra \DA^1_\kk$ as above define
$V \circledast W \colon X \times Y \ra \DA^1_\kk$ by $(V
\circledast W)(x,y)=V(x)+W(y)$. Then the motivic
Thom-Sebastiani Theorem states that a certain convolution of the
motivic vanishing cycles $\phi_{V,a}$ and $\phi_{W,b}$ determines
some part of the motivic vanishing cycles
$\phi_{V \circledast W,a+b}$ (see Theorem
\ref{t:thom-sebastiani-GLM-motivic-vanishing-cycles}).

Our main result states that after small adjustments - the
motivic vanishing cycles $\phi_{V,a}$ we use differ by a factor
$(-1)^{\dim X}$ from the usual motivic vanishing cycles (see
Remark~\ref{rem:motivic-fibers-signs}) - the motivic vanishing
cycles are 
both additive and multiplicative.

\begin{theorem}
  [{see Theorem~\ref{t:total-motivic-vanishing-cycles-ring-homo}}]
  \label{t:main-intro}
  There is a morphism
  \begin{equation}
    \label{eq:60}
    (K_0(\Var_{\DA^1_\kk}), \star) \ra
    (\mathcal{M}_\kk^{\hat{\upmu}},*) 
  \end{equation}
  of $K_0(\Var_\kk)$-algebras - called \textbf{motivic vanishing
    cycles measure} -
  which is uniquely
  determined by the following 
  property: it maps the class of each proper
  morphism $V \colon  X \ra \DA^1_\kk$ from a smooth and connected
  $\kk$-variety $X$ to the sum 
  $\sum_{a \in \kk} \phi_{V,a}$ of its motivic vanishing cycles.
\end{theorem}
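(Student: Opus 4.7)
The plan is to construct the map in two stages---first additivity, then multiplicativity on generators---using Bittner's presentation of $K_0(\Var_{\DA^1_\kk})$ to reduce to the classes of proper morphisms $V\colon X\to\DA^1_\kk$ from smooth connected $X$, on which the value is forced by the stated uniqueness property. For additivity, F.~Bittner and G.~Guibert--F.~Loeser--M.~Merle have already constructed, for each $a\in\kk$, a $K_0(\Var_\kk)$-linear map $K_0(\Var_{\DA^1_\kk})\to\mathcal{M}_\kk^{\hat{\upmu}}$ sending the class of such a $V$ to the motivic nearby fiber $\psi_{V,a}$. Subtracting the obvious additive ``fiber at $a$'' map $[V\colon X\to\DA^1_\kk]\mapsto[X_a]$ and incorporating the sign normalization of Remark~\ref{rem:motivic-fibers-signs} (which inserts the factor $(-1)^{\dim X}$ into the usual formula) yields an additive map whose value on each generator equals $\phi_{V,a}$. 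Since only critical values of $V$ contribute non-trivially, the sum over $a\in\kk$ is finite on each class and produces the additive map underlying \eqref{eq:60}.

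For multiplicativity, both $\star$ and $*$ are bilinear over $K_0(\Var_\kk)$, so it suffices to verify the identity on products of two generators $[V\colon X\to\DA^1_\kk]$ and $[W\colon Y\to\DA^1_\kk]$. By definition $[V]\star[W]=[V\circledast W]$ with $(V\circledast W)(x,y)=V(x)+W(y)$, so the image of $[V]\star[W]$ under \eqref{eq:60} equals $\sum_c\phi_{V\circledast W,c}$, while the convolution of the two images equals $\sum_{a,b}\phi_{V,a}*\phi_{W,b}$. The needed identity
\begin{equation*}
\phi_{V\circledast W,c}=\sum_{a+b=c}\phi_{V,a}*\phi_{W,b}
\end{equation*}
is the globalization of Theorem~\ref{t:thom-sebastiani-GLM-motivic-vanishing-cycles} along the stratification $(V\circledast W)^{-1}(c)=\bigsqcup_{a+b=c}V^{-1}(a)\times W^{-1}(b)$ of the fiber: on each stratum the local motivic Thom--Sebastiani identity describes the restriction of $\phi_{V\circledast W,c}$ as the external convolution $\phi_{V,a}*\phi_{W,b}$, and the strata assemble to the full $\phi_{V\circledast W,c}$.

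The main technical obstacle I anticipate is the bookkeeping of the sign convention $(-1)^{\dim X}$ in concert with both additivity---where the blow-up relations in Bittner's presentation mix the dimensions of $X$, the center $Z$, and the exceptional divisor $E$, whose parities need not agree---and with Thom--Sebastiani, whose natural dimension shift is $\dim X+\dim Y$. Careful tracking of this sign together with the $\hat{\upmu}$-equivariant structure threaded through the convolution $*$ will be required to confirm that the factor $(-1)^{\dim X}$ is compatible with each structure. Once these compatibilities are checked term-by-term, the theorem follows by combining the already established additivity of the motivic nearby fiber with the local motivic Thom--Sebastiani theorem.
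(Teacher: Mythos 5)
Your additivity step matches the paper's in outline (it rests on Theorem~\ref{t:GLM-measure} from \cite{guibert-loeser-merle-convolution} plus the obvious ``fiber at $a$'' map), and your globalization of Thom--Sebastiani is essentially Corollary~\ref{c:global-thom-sebastiani}, though the correct device for the ``assembly'' is not a stratification of the fiber by pieces indexed over all $(a,b)$ with $a+b=c$ (there are infinitely many such pairs), but the finiteness of critical values together with Proposition~\ref{p:motivic-vanishing-cycles-over-singular-part}, which places $\phi_{V,a}$ over $|\Sing(V)\cap X_a|$ and yields the finite disjoint decomposition of $|\Sing(V\circledast W)|$ in \eqref{eq:63}, \eqref{eq:Sing-V*W}.

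The genuine gap is in the multiplicativity step, and it is exactly the point the paper singles out as the main new ingredient. You write that ``the image of $[V]\star[W]$ under \eqref{eq:60} equals $\sum_c \phi_{V\circledast W,c}$.'' But the defining property of the map only prescribes its value on classes of \emph{proper} morphisms from smooth connected sources, and $V\circledast W\colon X\times Y\to\DA^1_\kk$ is almost never proper even when $V$ and $W$ are (it factors through $\DA^1_\kk\times\DA^1_\kk\xrightarrow{+}\DA^1_\kk$, which is not proper). So you cannot read off $\Phi([X\times Y\xra{V\circledast W}\DA^1_\kk])$ from the formula; you must compute it from the additive extension. The paper resolves this via the compactification Proposition~\ref{p:compactification}: it realizes $V\circledast W$ as the restriction to a dense open of a projective $h\colon Z\to\DA^1_\kk$ whose boundary is a simple normal crossing divisor on which $h$ restricts to \emph{smooth} projective morphisms (whose classes are killed by $\Phi'$ by Remark~\ref{rem:Phi'-on-smooth-proper-morphisms}), and whose critical locus is contained in $X\times Y$; Corollary~\ref{c:mot-van-fiber-compactification} then identifies $(\phi_h)_{\DA^1_\kk}$ with $(\phi_{V\circledast W})_{\DA^1_\kk}$. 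Without this (or some replacement), your argument does not close, because you are evaluating a map on a class that is not a generator of the given form.

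As a secondary remark, the sign issue you flag is not really the obstacle: once $\phi_{V,a}$ is defined as $[|X_a|]-\psi_{V,a}$ (no dimension-dependent sign in the formula itself), the blow-up relations are automatically respected; indeed Remark~\ref{rem:justify-sign-vanishing} shows that it is precisely the Denef--Loeser sign convention, with its overall $(-1)^{\dim X}$, that would \emph{break} Bittner's relations. So worrying about $(-1)^{\dim X}$ in the blow-up bookkeeping points in the wrong direction; the real technical work is the compactification argument handling non-properness of $V\circledast W$.
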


The motivic vanishing cycles measure is a motivic
measure in the sense that it is a ring morphism from
some Grothendieck ring of
varieties to another ring.
The multiplication $*$ on the target of our measure is a convolution
product whose definition is due to Looijenga and involves Fermat
varieties. The 
multiplication $\star$ on the source is given by
$[X \xra{V} \DA^1_\kk] \star [Y \xra{W} \DA^1_\kk] = 
[X \times Y \xra{V \circledast W} \DA^1_\kk]$.
Apart from the additivity and local multiplicativity results
mentioned 
above, the main ingredient in the proof of
Theorem~\ref{t:main-intro} is a compactification construction
described in 
\cite{valery-olaf-matfak-motmeas}. 
In fact, 
we prove a slightly stronger statement
in Theorem~\ref{t:total-motivic-vanishing-cycles-ring-homo}: the
motivic 
vanishing cycles 
measure \eqref{eq:60}
comes from a
morphism $(K_0(\Var_{\DA^1_\kk}), \star) \ra
(\tilde{\mathcal{M}}_{\DA^1_\kk}^{\hat{\upmu}},\star)$ 
of $K_0(\Var_\kk)$-algebras. 
Let us mention that our sign adjustments are already
necessary for additivity (see
Remark~\ref{rem:justify-sign-vanishing}).

In the last part of this article we compare the motivic vanishing
cycles measure 
with a motivic measure of a completely different categorical
nature (in case 
$\kk=\DC$). Mapping a projective morphism $W \colon X \ra \DA^1_\DC$
from a smooth complex variety $X$ to its category of matrix
factorizations gives rise to
a ``matrix factorization'' motivic
measure
\begin{equation*}
  \mu \colon 
  (K_0(\Var_{\DA^1_\DC}), \star)  
  \ra K_0(\sat_\DC^{{\DZ}_2})  
\end{equation*}
as we explained in 
\cite{valery-olaf-matfak-semi-orth-decomp,
  valery-olaf-matfak-motmeas}. The target of this ring morphism
is the Grothendieck
ring of saturated differential $\DZ_2$-graded categories.
Here is our comparison result.

\begin{theorem} 
  [{see Theorem~\ref{t:comparison}}]
  \label{t:comparison-intro}
  We have the following commutative diagram of ring homomorphisms
  \begin{equation*}
    \xymatrix{
      {(K_0(\Var_{\DA^1_\DC}), \star)} 
      \ar[r]^-{\mu} \ar[d]
      & {K_0(\sat_{\DC}^{{\DZ}_2})} \ar[d]^-{\chi_{\HP}} \\
      {(\mathcal{M}_{\DC}^{\hat{\upmu}},*)} \ar[r]^-{\chi_\opc} 
      & {\DZ}
    }
  \end{equation*}
  where the left vertical arrow is 
  the motivic vanishing cycles measure \eqref{eq:60}
  from Theorem~\ref{t:main-intro},
  the lower horizontal arrow is induced by forgetting the group
  action and taking the Euler
  characteristic with compact support, and the 
  right vertical arrow is induced by taking the Euler
  characteristic of periodic cyclic homology.
\end{theorem}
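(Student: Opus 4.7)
The plan is to observe that both compositions around the square are ring homomorphisms $K_0(\Var_{\DA^1_\DC}) \to \DZ$, so by Bittner's presentation of $K_0(\Var_{\DA^1_\DC})$ it suffices to check agreement on classes $[V \colon X \to \DA^1_\DC]$ with $V$ projective and $X$ smooth and connected. These are precisely the generators on which $\mu$ is originally defined and on which the motivic vanishing cycles measure of Theorem~\ref{t:main-intro} is characterised. All four edges of the square are ring maps: the two verticals are the motivic measures in question, and the two horizontals are Euler-characteristic maps which are multiplicative with respect to the relevant convolution products. For $\chi_\opc$ the multiplicativity uses the compactly-supported Euler characteristics of the Fermat varieties entering the definition of $*$, and for $\chi_{\HP}$ it follows from the K\"unneth formula for periodic cyclic homology of saturated differential $\DZ_2$-graded categories.

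For a generator $[V \colon X \to \DA^1_\DC]$, the counterclockwise composition yields $\sum_{a \in \DC} \chi_\opc(\phi_{V,a})$ after forgetting the $\hat{\upmu}$-action. By Denef and Loeser's identification of the Hodge-theoretic realisation of the motivic vanishing cycles with Deligne's topological vanishing cycle complex, this integer equals $(-1)^{\dim X}$ times the Euler characteristic of the total vanishing cycle cohomology $\bR\Gamma(X, \phi_V \DC_X)$, where the sign reflects the normalisation recalled in Remark~\ref{rem:motivic-fibers-signs}. The clockwise composition yields $\chi_{\HP}(\bfMF(X, V))$, and a theorem due to Efimov (building on earlier work of Preygel, Kapranov--Vasserot, and Segal) identifies the $\DZ_2$-graded vector space $\HP_*(\bfMF(X, V))$ with the two-periodic hypercohomology of the same vanishing cycle sheaf, shifted by $\dim X$. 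Taking Euler characteristics of the two identifications produces the same integer.

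The main obstacle is the bookkeeping of signs, shifts and orientations: the factor $(-1)^{\dim X}$ built into our normalisation of $\phi_{V,a}$, the $\DZ_2$-grading convention on $\bfMF(X,V)$, and the degree shift appearing in Deligne's vanishing cycle functor must align exactly so that the two Euler characteristics coincide as integers rather than differing by a global sign. A secondary technical point is verifying that $\chi_\opc$ is genuinely a ring map out of $(\mathcal{M}_\DC^{\hat\upmu}, *)$; this requires inspecting Looijenga's convolution product and the Fermat normalisation. Once these conventions are matched and the Efimov-type comparison is invoked, the commutativity on each generator reduces to a single topological identity, and the Bittner-type reduction above completes the proof.
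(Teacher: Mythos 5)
Your approach is essentially the same as the paper's: reduce via Bittner's presentation to generators $[V\colon X\to\DA^1_\DC]$ with $V$ projective and $X$ smooth connected, where both motivic measures are explicit; then compare the two Euler characteristics at each critical value $a$ using Efimov's theorem on $\chi_{\HP}(\bfMF(X,V-a))$ on one side and a Denef--Loeser/GLM-type realization statement on the other. The ring-map properties of the four edges are also established exactly as you indicate (the multiplicativity of $\chi_\opc$ out of $(\mathcal{M}^{\hat\upmu}_\DC,*)$ is Proposition~\ref{p:chi_c-covolution}, proved by a fibration/Euler-characteristic argument with the Fermat varieties, just as you sketch).

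Two points where your write-up is loose and would need tightening. First, the sign bookkeeping: you repeatedly invoke a factor $(-1)^{\dim X}$, but in the normalisation used here ($\phi_{V,a}=[|X_a|]-\psi_{V,a}$, which already absorbs the $(-1)^{\dim X}$ from the GLM convention) the comparison works out to a plain overall minus sign on both sides, namely $\chi_\opc((\epsilon_0)_!\phi_{W,0}) = -\sum_i(-1)^i\dim\opH^i_\opc(X_0^\an,\phi^{\geom}_W\DC_{X^\an})$, and Efimov's theorem (as cited) produces the identical right-hand side for $\chi_{\HP}(\bfMF(X,W))$; no dimension-dependent sign survives. Second, the phrase ``Denef and Loeser's identification of the Hodge-theoretic realisation'' needs to be made precise: the paper does this via an explicit realization homomorphism $\opSH_Y\colon K_0(\Var_Y)\to K_0(\D^\bd_\opc(Y^\an))$ compatible with pushforward (Lemma~\ref{l:SH}), combined with the GLM result that $\opSH_{X_0}(\psi_{W,0})=[\psi^\geom_W\DC_{X^\an}]$ and the standard distinguished triangle relating $\psi^\geom$ and $\phi^\geom$. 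You also omit the (small but necessary) observation that passing to the Karoubi envelope $\bfMF(X,W)^\natural$ does not change $\HP$, by the localization theorem for cyclic homology. None of these is a wrong turn, but as stated the proposal leaves exactly the delicate steps to the reader.
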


The main ingredients in the proof of this theorem are the
comparison between the periodic cyclic homology of the dg
category of matrix factorizations of a given potential $V$ with
the vanishing cohomology of $V$ due to A.~Efimov
\cite{efimov-cyclic-homology-matrix-factorizations}, and the
comparison between the motivic and geometric vanishing cycles due
to G.~Guibert, F.~Loeser and M.~Merle
\cite{guibert-loeser-merle-convolution}.
 
\subsection{Structure of the article}
\label{sec:structure-article}

\begin{itemize}
\item[\S\ref{sec:grothendieck-rings}]
  We remind the reader of various (equivariant) Grothendieck
  abelian groups of 
  varieties and multiplications (or ``convolutions'') on them.
  We recall Looijenga's convolution
  product $*$ in section~\ref{sec:convolution} and include a direct
  proof of associativity (see
  Proposition~\ref{p:convolution-comm-ass-unit}); 
  this reproves results of 
  \cite[5.1-5.5]{guibert-loeser-merle-convolution}.
  We also define a variant of Looijenga's convolution product for
  varieties over $\DA^1_\kk$ in section~\ref{sec:conv-vari-over}.
\item[\S\ref{sec:motiv-vanish-fiber}]
  We recall the definition of the motivic nearby
  fiber $\psi_{V,a}$ and the motivic vanishing
  cycles $\phi_{V,a}$ and show that $\phi_{V,a}$ lies in 
  $\mathcal{M}^{\hat{\upmu}}_{|\Sing(V) \cap X_a|}$ (see
  Proposition~\ref{p:motivic-vanishing-cycles-over-singular-part}). 
  We also show an invariance property of $\phi_{V,a}$ 
  in Corollary~\ref{c:mot-van-fiber-compactification}.
\item[\S\ref{sec:thom-sebastiani}]
  We state the motivic Thom-Sebastiani Theorem
  \cite[Thm.~5.18]{guibert-loeser-merle-convolution}
  as
  Theorem~\ref{t:thom-sebastiani-GLM-motivic-vanishing-cycles} and
  give some corollaries. In particular, we globalize the
  Thom-Sebastiani Theorem to 
  Corollary~\ref{c:global-thom-sebastiani}.
\item[\S\ref{sec:motiv-vanish-fiber-1}]
  A corollary of
  \cite[Thm.~3.9]{guibert-loeser-merle-convolution} is given as
  Theorem~\ref{t:GLM-measure}. We obtain additivity of the
  motivic vanishing cycles in 
  Theorem~\ref{t:total-vanishing-cycles-group-homo}.
  Then we deduce our main
  Theorem~\ref{t:total-motivic-vanishing-cycles-ring-homo} 
  using the previous Thom-Sebastiani results
  and
  the compactification result stated as
  Proposition~\ref{p:compactification}.
\item [\S\ref{sec:comp-with-matr}]
  We remind the reader of the categorical motivic measure
  in \cite{bondal-larsen-lunts-grothendieck-ring}
  and its relation to the matrix factorization measure.
  Then we prove Theorem~\ref{t:comparison}. We finish by giving
  two examples and by drawing a diagram relating the motivic
  measures considered in this article.
\end{itemize}

\subsection{Acknowledgements}
\label{sec:acknowledgements}

We thank Daniel Bergh, Alexander Efimov, Annabelle Hartmann,
Fran\-\c{c}ois Loeser, and 
Anatoly Preygel for useful discussions, and the referee for
careful reading.

The first author was supported by 
NSA grant 43.294.03.
The second author was supported by a postdoctoral
fellowship of the DFG
and by 
SFB/TR 45 of the DFG.

\subsection{Conventions}
\label{sec:conventions}

We fix an algebraically closed field $\kk$ of characteristic
zero.
By a $\kk$-variety we mean a separated reduced scheme of finite
type over $\kk$.
A morphism of $\kk$-varieties is a morphism of $\kk$-schemes.
Let $\Var_\kk$ be the category of
$\kk$-varieties.
We write $\times$ instead of $\times_{\Spec \kk}$. By our
assumptions on $\kk$, the product of two $\kk$-varieties is again
reduced and hence a $\kk$-variety.
If $X$ is a scheme we denote by $|X|$ the
corresponding reduced closed subscheme. 

\section{Grothendieck rings of varieties}
\label{sec:grothendieck-rings}

\subsection{Grothendieck rings of varieties over a base variety}
\label{sec:groth-rings-vari}


Fix a $\kk$-variety $S$. By an $S$-variety we mean a morphism
$X \ra S$ of $\kk$-varieties. Let $\Var_S$ be the category of
$S$-varieties.
The Grothendieck group $K_0(\Var_S)$ of $S$-varieties is the
quotient of the free abelian group on 
isomorphism 
classes $\langle X \ra S\rangle$ of $S$-varieties $X \ra S$ by the subgroup generated by the scissor
expressions $\langle X \ra S \rangle - \langle (X - Y) \ra S
\rangle - \langle Y \ra S\rangle$ where $Y \subset X$ is a
closed reduced subscheme.
Any $S$-variety $X \ra S$  
defines an element $[X \ra S]$ of $K_0(\Var_S)$. 

Given $S$-varieties 
$X \ra S$ and $Y \ra S$, the composition
$|X \times_S Y| \ra X \times_S Y \ra S$ is an $S$-variety; this
operation turns $K_0(\Var_S)$ into a commutative associative ring
with identity element
$[S \xra{\id} S]$ (use \cite[Prop.~4.34]{goertz-wedhorn-AGI} for
associativity). 

Let $\mathcal{M}_S:= K_0(\Var_S)[\DL_S^{-1}]$ be the ring obtained
from  
$K_0(\Var_S)$ by inverting $\DL_S=[\DA^1_S \ra S]$.

We usually write $K_0(\Var_\kk)$ instead of $K_0(\Var_{\Spec
  \kk})$, $\DL=\DL_\kk$ instead of $\DL_{\Spec \kk}$, and $\mathcal{M}_\kk$ instead of $\mathcal{M}_{\Spec \kk}$.

\begin{remark}
  \label{rem:compare-Grothendieck-rings}
  Note that the Grothendieck ring $K_0(\Var_S)$ defined here is
  canonically isomorphic to the Grothendieck ring defined in
  \cite[3.1]{nicaise-sebag-grothendieck-ring-of-varieties}, by
  \cite[3.2.2]{nicaise-sebag-grothendieck-ring-of-varieties}.
\end{remark}

\subsubsection{Pullback}
\label{sec:pullback}

Let $f \colon T \ra S$ be a morphism of $\kk$-varieties.  Then
the functor $\Var_S \ra \Var_T$,
$(X \ra S) \mapsto (|T \times_S X| \ra T \times_S X \ra T)$,
induces a morphism
\begin{equation}
  \label{eq:1}
  f^* \colon K_0(\Var_S) \ra K_0(\Var_T)
\end{equation}
of commutative unital rings
which satisfies $f^*(\DL_S)=\DL_T$
and hence 
induces a morphism
\begin{equation}
  \label{eq:2}
  f^* \colon \mathcal{M}_S \ra \mathcal{M}_T
\end{equation}
of rings. 
If $g \colon U \ra T$ is another morphism of
$\kk$-varieties, we have $g^* f^* =(fg)^*$, by 
\cite[Prop.~4.34]{goertz-wedhorn-AGI}.

In particular, $K_0(\Var_S)$ (resp.\ $\mathcal{M}_S$) becomes a
$K_0(\Var_\kk)$-algebra (resp.\ $\mathcal{M}_\kk$-algebra), and 
\eqref{eq:1} (resp.\ \eqref{eq:2}) is a morphism of
$K_0(\Var_\kk)$-algebras (resp of $\mathcal{M}_\kk$-algebras).
Note that the obvious map defines a canonical
isomorphism  
\begin{equation*}
  \mathcal{M}_\kk \otimes_{K_0(\Var_\kk)} K_0(\Var_S) \sira
  \mathcal{M}_S 
\end{equation*}
of $\mathcal{M}_\kk$-algebras.

\subsubsection{Pushforward}
\label{sec:push-forward}

Let $f \colon T \ra S$ be a morphism of $\kk$-varieties.
The functor $\Var_T \ra \Var_S$, $(Y \xra{y} T) \mapsto (Y
\xra{fy} S)$, induces a morphism
\begin{equation*}
  f_! \colon K_0(\Var_T) \ra K_0(\Var_S)
\end{equation*}
of $K_0(\Var_\kk)$-modules. 
Tensoring with $\mathcal{M}_\kk$ yields a morphism
\begin{equation*}
  f_! \colon \mathcal{M}_T \ra \mathcal{M}_S
\end{equation*}
of $\mathcal{M}_\kk$-modules which sends
$[Y \xra{y} T] \cdot \DL_T^{-i}$ to
$[Y \xra{fy} S] \cdot \DL_S^{-i}$.

\begin{remark}
  \label{rem:compare-Grothendieck-rings-and-pullback}
  The canonical isomorphisms from 
  Remark~\ref{rem:compare-Grothendieck-rings}
  are compatible with pullback and pushforward, 
  by \cite[Prop.~4.34]{goertz-wedhorn-AGI}.
\end{remark}

\subsection{Grothendieck rings of equivariant varieties over a
  base variety}
\label{sec:groth-rings-equiv}

For $n \in \DN_{>0}$ let $\upmu_n=\Spec(\kk[x]/(x^n-1))$ be the
group $\kk$-variety of $n$-th roots of unity. Note that actions
of $\upmu_n$ on a $\kk$-variety $X$ correspond bijectively to group
morphisms $\upmu_n(\kk) \ra \Aut_{\Var_\kk}(X)$. 

Fix a $\kk$-variety $S$ and let $n \in \DN_{>0}$. Recall that a
good $\upmu_n$-action on a $\kk$-variety is a $\upmu_n$-action
such that each $\upmu_n(\kk)$-orbit is contained in an affine
open subset of $X$. An $S$-variety with a good $\upmu_n$-action is
an $S$-variety $p \colon X \ra S$ together with a good
$\upmu_n$-action on $X$. So $p$ is $\upmu_n$-equivariant if we equip
$S$ with the trivial $\upmu_n$-action. We obtain the category
$\Var_S^{\upmu_n}$ of $S$-varieties with good $\upmu_n$-action.
 
The definition of the Grothendieck ring $K_0(\Var_S^{\upmu_n})$ of
$S$-varieties with good $\upmu_n$-action is evident from
\cite[2.2-2.5]{guibert-loeser-merle-convolution}; apart from the
usual scissor relations there is another family of relations,
cf.\ \cite[(2.2.1)]{guibert-loeser-merle-convolution}.  Any
$S$-variety $X \ra S$ with good $\upmu_n$-action gives rise to an
element $[X \ra S]=[X]$ of $K_0(\Var_S^{\upmu_n})$. The
product of $[X \ra S]$ and $[Y \ra S]$ is the element obtained
from $|X \times_S Y| \ra S$ with the obvious diagonal
$\upmu_n$-action.  Define
$\DL_S=\DL_{S, \upmu_n}=[\DA^1_S \ra S] \in K_0(\Var_S^{\upmu_n})$
where $\upmu_n$ acts trivially on $\DA^1_S$. Let
$\mathcal{M}_S^{\upmu_n}:= K_0(\Var_S^{\upmu_n})[\DL_S^{-1}]$.

We write 
$K_0(\Var_\kk^{\upmu_n})$ and $\mathcal{M}_\kk^{\upmu_n}$ instead of 
$K_0(\Var_{\Spec \kk}^{\upmu_n})$ and $\mathcal{M}_{\Spec \kk}^{\upmu_n}$.

If $f \colon T \ra S$ is a morphism of $\kk$-varieties we obtain
as above a pullback morphism
$f^* \colon K_0(\Var_S^{\upmu_n}) \ra K_0(\Var_T^{\upmu_n})$
of $K_0(\Var_\kk^{\upmu_n})$-algebras
satisfying $f^*(\DL_S)=\DL_T$ and an induced
pullback morphism
$f^* \colon \mathcal{M}_S^{\upmu_n} \ra \mathcal{M}_T^{\upmu_n}$
of $\mathcal{M}_\kk^{\upmu_n}$-algebras.
We also have a pushforward morphism
$f_! \colon K_0(\Var_T^{\upmu_n}) \ra K_0(\Var_S^{\upmu_n})$
of $K_0(\Var_\kk^{\upmu_n})$-modules, and a pushforward morphism
$f_! \colon \mathcal{M}_T^{\upmu_n} \ra \mathcal{M}_S^{\upmu_n}$
of $\mathcal{M}_\kk^{\upmu_n}$-modules. 
For $n=1$ we recover the notions from 
\ref{sec:groth-rings-vari}.

Whenever $n'$ is a multiple of $n$ there is a morphism
$\upmu_{n'} \ra \upmu_n$, $\lambda
\mapsto \lambda^{n'/n}$, of $\kk$-group varieties inducing
morphisms 
\begin{align}
  \label{eq:28}
  K_0(\Var_S^{\upmu_n}) & \ra K_0(\Var_S^{\upmu_{n'}}),\\
  \label{eq:29}
  \mathcal{M}_S^{\upmu_n} & \ra \mathcal{M}_S^{\upmu_{n'}},
\end{align}
of rings.
These 
morphism are compatible with pullback and pushforward morphisms.

In particular, $K_0(\Var^{\upmu_n}_S)$ (resp.\
$\mathcal{M}^{\upmu_n}_S$) becomes a $K_0(\Var_\kk)$-algebra
(resp.\ $\mathcal{M}_\kk$-algebra)
and the morphisms 
\eqref{eq:28}
and \eqref{eq:29} are morphisms of algebras. We have a canonical
isomorphism 
\begin{equation}
  \label{eq:15}
  \mathcal{M}_\kk \otimes_{K_0(\Var_\kk)} K_0(\Var^{\upmu_n}_S)
  \sira \mathcal{M}_S^{\upmu_n}  
\end{equation}
of $\mathcal{M}_\kk$-algebras
given by $\tfrac{r}{(\DL_\kk)^n}
\otimes a \mapsto \tfrac{ra}{(\DL_S)^n}$. 

Let $\hat{\upmu}$ be the (inverse) limit of the
$(\upmu_n(\kk))_{n \in \DN_{>0}}$ with respect to the morphisms
$\upmu_{n'}(\kk) \ra \upmu_n(\kk)$,
$\lambda \mapsto \lambda^{n'/n}$, whenever $n'$ is a multiple of
$n$.

An $S$-variety with good $\hat{\upmu}$-action is by definition an
$S$-variety $X \ra S$ together with a group morphism $\hat{\upmu} \ra
\Aut_{\Var_\kk}(X)$ that comes from a good $\upmu_n$-action on $X$,
for some $n \in \DN_{>0}$.
As in \cite[2.2]{guibert-loeser-merle-convolution} we obtain 
the category $\Var_S^{\hat{\upmu}}$ of $S$-varieties with good
$\hat{\upmu}$-action.
We define $K_0(\Var_S^{\hat{\upmu}})$ and
$\mathcal{M}_S^{\hat{\upmu}}$ in the obvious way so that we have
\begin{align*}
  K_0(\Var_S^{\hat{\upmu}}) & = \colim_n K_0(\Var_S^{\upmu_n}),\\
  \mathcal{M}_S^{\hat{\upmu}} & = \colim_n \mathcal{M}_S^{\upmu_n}.
\end{align*}
The Grothendieck ring 
$K_0(\Var_S^{\hat{\upmu}})$
is an $K_0(\Var_\kk)$-algebra (even a 
$K_0(\Var_\kk^{\hat{\upmu}})$-algebra), and  
$\mathcal{M}_S^{\hat{\upmu}}$ is a $\mathcal{M}_\kk$-algebra
(even a 
$\mathcal{M}_\kk^{\hat{\upmu}}$-algebra). We have 
\begin{equation*}
  \mathcal{M}_\kk \otimes_{K_0(\Var_\kk)} K_0(\Var^{\hat{\upmu}}_S)
  \cong \mathcal{M}_S^{\hat{\upmu}}  
\end{equation*}
canonically as rings.
If $f \colon T \ra S$ is a morphism of $\kk$-varieties, we obtain
a pullback morphism
$f^* \colon K_0(\Var_S^{\hat{\upmu}}) \ra  K_0(\Var_T^{\hat{\upmu}})$
of $K_0(\Var_\kk)$-algebras and a pushforward morphism
$f_! \colon
 K_0(\Var_T^{\hat{\upmu}})
\ra
 K_0(\Var_S^{\hat{\upmu}})$
of $K_0(\Var_\kk)$-modules. The base changes of these morphisms along
the ring morphism $K_0(\Var_\kk) \ra \mathcal{M}_\kk$ are denoted
by the same symbols.

Instead of working with $\upmu_n$ we could work more generally with
$\upmu_{n_1} \times \dots \upmu_{n_r}$ (for $r \in \DN$ and $n_1,
\dots, n_r \in 
\DN_{>0}$), and instead of $\hat{\upmu}$ we could work
with $\hat{\upmu}^r$ (for $r \in \DN$). We extend our notation
accordingly. 

\begin{remark}
  \label{rem:dictionary}
  There is an alternative description of 
  $K_0(\Var_S^{\hat{\upmu}^r})$ and
  $\mathcal{M}_S^{\hat{\upmu}^r}$,
  see 
  the dictionary in
  \cite[2.3-2.6]{guibert-loeser-merle-convolution}. 
  When referring to results of
  \cite{guibert-loeser-merle-convolution} we will usually
  translate 
  them using this dictionary.
\end{remark}

\begin{lemma}
  \label{l:open-closed-decomposition}
  Let $S$ be a $\kk$-variety and $F \subset S$ a closed reduced
  subscheme with open complement $U$.
  Let $i \colon F \ra S$ and $j \colon U \ra S$ denote the
  inclusions. Then 
  \begin{align*}
    (j^*, i^*) \colon K_0(\Var_S^{\hat{\upmu}}) 
    & \sira
    K_0(\Var_U^{\hat{\upmu}})
    \times
    K_0(\Var_F^{\hat{\upmu}}),\\
    A & \mapsto (j^*(A), i^*(A)),
  \end{align*}
  is an isomorphism of $K_0(\Var_\kk)$-algebras, with
  inverse given by $(B,C) \mapsto j_!(B) + i_!(C)$.
  Similarly, 
  \begin{equation*}
    (j^*, i^*) \colon \mathcal{M}_S^{\hat{\upmu}} \sira
    \mathcal{M}_U^{\hat{\upmu}}
    \times \mathcal{M}_F^{\hat{\upmu}}
  \end{equation*}
  is an isomorphism of $\mathcal{M}_\kk$-algebras.
\end{lemma}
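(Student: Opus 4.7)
The plan is to verify the lemma by the standard scissor-relation argument, promoted to the equivariant setting; and then deduce the $\mathcal{M}$-version by localization.

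First I would note that $(j^*,i^*)$ is clearly a morphism of $K_0(\Var_\kk)$-algebras, since each of $j^*$ and $i^*$ is one (with $U \times_S U = U$ and $F \times_S F = F$ accounting for unitality), and the target carries the componentwise product. So it suffices to show that the proposed module map $\Phi \colon (B,C) \mapsto j_!(B)+i_!(C)$ is a two-sided inverse; then $\Phi$ will automatically inherit the ring structure.

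The key input is the equivariant scissor relation. For a generator $[X \xra{x} S]$ with good $\hat\upmu$-action, the preimages $X_U := x^{-1}(U)$ and $X_F := x^{-1}(F) = |X\times_S F|$ form a $\hat\upmu$-stable open/closed decomposition of $X$, and by the scissor relation in $K_0(\Var_S^{\hat\upmu})$ we have
\begin{equation*}
  [X \ra S] \;=\; [X_U \hra X \ra S] + [X_F \hra X \ra S]
  \;=\; j_!j^*[X \ra S] + i_!i^*[X \ra S].
\end{equation*}
Since the right-hand side equals $\Phi \circ (j^*,i^*)([X\ra S])$, this shows $\Phi \circ (j^*,i^*) = \id$ on generators, hence everywhere. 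For the other composition, on a generator $[Z \xra{z} U]$ one has $i^* j_! [Z \xra{z} U] = [Z \times_S F \ra F]$, and $Z \times_S F = Z \times_U (U \times_S F) = \emptyset$ because $U \cap F = \emptyset$; symmetrically $j^* i_! = 0$; and $j^* j_! = \id$, $i^* i_! = \id$ follow from $U \times_S U = U$ and $F \times_S F = F$ (the diagonal $\hat\upmu$-action on the fiber product restricts to the original action in these cases). Hence $(j^*,i^*) \circ \Phi = \id$.

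For the $\mathcal{M}$-statement, one observes that $j^*\DL_S = \DL_U$, $i^*\DL_S = \DL_F$, while $j_!$ and $i_!$ send $\DL_U$-torsion and $\DL_F$-torsion to $\DL_S$-torsion (in fact they respect multiplication by the appropriate $\DL$); hence all four morphisms descend to the localizations and the same inverse formula works, yielding the $\mathcal{M}_\kk$-algebra isomorphism.

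I do not anticipate a real obstacle: the one point requiring care is the equivariant enhancement, i.e.\ checking that the decomposition $X = X_U \sqcup X_F$ is $\hat\upmu$-stable (immediate since the action fixes the structure morphism to $S$, on which $\hat\upmu$ acts trivially) and that the good-action hypothesis is preserved under passage to locally closed subvarieties and fiber products (which is standard, cf.\ \cite[2.2-2.5]{guibert-loeser-merle-convolution}). Everything else is a formal consequence of base-change identities of the form $U\times_S F = \emptyset$ and $U\times_S U = U$.
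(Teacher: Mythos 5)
Your proof is correct and is exactly the spelling-out of what the paper dismisses with ``This is obvious from the definitions'': define $\Phi$, verify $\Phi\circ(j^*,i^*)=\id$ via the scissor relation, verify $(j^*,i^*)\circ\Phi=\id$ via the base-change identities $U\times_S U\cong U$, $F\times_S F\cong F$, $U\times_S F=\emptyset$, and descend to $\mathcal{M}$ by localization. One small remark on the $\mathcal{M}$-step: the phrase about sending $\DL$-torsion to $\DL$-torsion is a slightly awkward way to say it; the cleanest route is that $(j^*,i^*)$ is a $K_0(\Var_\kk)$-algebra isomorphism carrying $\DL_S$ to $(\DL_U,\DL_F)$, so localizing at these elements (equivalently, applying $\mathcal{M}_\kk\otimes_{K_0(\Var_\kk)}-$, using the canonical isomorphism recorded in \S\ref{sec:groth-rings-equiv}) preserves the isomorphism.
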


\begin{proof}
  This is obvious from the definitions.
\end{proof}

\begin{remark}
  \label{rem:standard-multiplication}
  Recall that $K_0(\Var_S)$,
  $K_0(\Var_S^{\upmu_n})$ and
  $K_0(\Var_S^{\hat{\upmu}})$ are $K_0(\Var_\kk)$-algebras
  whose multiplications are
  induced from the fiber product over
  $S$. In the rest of this article mainly the 
  underlying $K_0(\Var_\kk)$-module structure on 
  $K_0(\Var_S)$,
  $K_0(\Var_S^{\upmu_n})$ and
  $K_0(\Var_S^{\hat{\upmu}})$ will be important. 
  Given $(T \ra \Spec \kk)$ in $\Var_\kk$ and $(Z \ra S)$ in $\Var_S$
  or $\Var_S^{\upmu_n}$ or
  $\Var_S^{\hat{\upmu}}$ it is given by
  \begin{equation*}
    [T \ra \Spec \kk].[Z \ra S] = [T \times Z \ra S].
  \end{equation*}
  In fact, we will introduce other multiplications on 
  the 
  $K_0(\Var_\kk)$-modules
  $K_0(\Var_S^{\upmu_n})$ and
  $K_0(\Var_S^{\hat{\upmu}})$
  turning them into 
  $K_0(\Var_\kk)$-algebras.
\end{remark}

\subsection{Convolution}
\label{sec:convolution}

After some preparations we define the convolution product $*$ on
$K_0(\Var_S^{\upmu_n})$ (Definition~\ref{d:convolution})
and show that it turns
$K_0(\Var_S^{\upmu_n})$ into a 
$K_0(\Var_\kk)$-algebra
(Proposition~\ref{p:convolution-comm-ass-unit}). 
This is not a new result: see 
\cite[5.1-5.5]{guibert-loeser-merle-convolution} and use the
dictionary from Remark~\ref{rem:dictionary}. 
Nevertheless we liked the exercise of showing associativity 
without using this dictionary.



Let $S$ be a $\kk$-variety and $n \in \DN_{>0}$.  Let
$p \colon Z \ra S$ be an object of $\Var^{\upmu_n \times \upmu_n}_S$.
We assume that $\upmu_n \times \upmu_n$ acts on $Z$ from the right.
The group $\upmu_n \times \upmu_n$ acts on the $\kk$-variety
$Z \times \DGm \times \DGm$ via
$(z,x,y).(s,t):=(z.(s,t), s^{-1} x, t^{-1} y)$. The quotient with
respect to this action is the balanced
product 
$Z \times^{\upmu_n \times \upmu_n} \DGm \times \DGm$ which is again
a $\kk$-variety (use \cite[Exp.~V.1]{SGA-1}).
We equip it with the
diagonal $\upmu_n$-action given by 
$[z,x,y].t= [z, tx, ty]=[z.(t,t), x, y]$.
With the obvious morphism to $S$ induced by $p$ it is 
an object of
$\Var_S^{\upmu_n}$.
Similarly, starting from the two closed $S$-subvarieties
of $Z \times \DGm \times \DGm$ defined by the equations $x^n+y^n=1$ and
$x^n+y^n=0$, we obtain the two objects
$(Z \times^{\upmu_n \times \upmu_n} \DGm \times \DGm)|_{x^n+y^n=1}$
and
$(Z \times^{\upmu_n \times \upmu_n} \DGm \times \DGm)|_{x^n+y^n=0}$
of $\Var_S^{\upmu_n}$.

Given $(Z \xra{p} S) \in \Var_S^{\upmu_n \times \upmu_n}$ as above define
\begin{align}
  \label{eq:psi-two-mu-s}
  \Psi(Z \xra{p} S)
  := &
  -[(Z \times^{\upmu_n \times \upmu_n} \underset{x}{\DGm} \times
  \underset{y}{\DGm})|_{x^n+y^n=1} \xra{[z,x,y] \mapsto p(z)}
  S]\\
  \notag
  & +[(Z \times^{\upmu_n \times \upmu_n} \underset{x}{\DGm} \times
  \underset{y}{\DGm})|_{x^n+y^n=0}
  \xra{([z,x,y]) \mapsto p(z)} S] 
  \in K_0(\Var_S^{\upmu_n}).
\end{align}
Here the symbols $x$ and $y$ below $\DGm \times \DGm$ indicate
that $(x,y)$ forms a system 
of coordinates 
on $\DGm \times \DGm$.
Similar notation will be
used below without further explanations.

\begin{example}
  \label{ex:trivial-action}
  Let $p \colon Z \ra S$ be as above and assume that
  $\upmu_n \times \upmu_n$ acts trivially on $Z$. Then
  $Z \times^{\upmu_n \times \upmu_n} \DGm \times \DGm \sira Z
  \times \DGm \times \DGm$,
  $[z,x,y] \mapsto (z,x^n, y^n)$, is an isomorphism which is
  $\upmu_n$-equivariant if we equip 
  $Z \times \DGm \times \DGm$
  with the trivial $\upmu_n$-action. This implies
  $\Psi(Z \xra{p} S) = [Z \xra{p} S]$ in $K_0(\Var_S^{\upmu_n})$
  where $Z$ is considered as a $\upmu_n$-variety over $S$ with
  trivial action. In particular, we obtain
  $\Psi(S \xra{\id} S) = [S \xra{\id} S]$.
\end{example}

\begin{example}
  \label{ex:p-product-over-k-trivial-on-one-factor}
  Assume that $p=p_1 \times p_2 \colon Z=Z_1 \times Z_2 \ra S=S_1
  \times S_2$ where $S_1$ and $S_2$ are $\kk$-varieties and 
  $p_i \colon Z_i \ra S_i$
  is an object of $\Var_{S_i}^{\upmu_n}$, for $i=1,2$.
  Moreover assume that the action of $\upmu_n$ on
  $Z_2$ is trivial. Then 
  $Z_1 \times Z_2 \times^{\upmu_n \times \upmu_n} \DGm \times
  \DGm \sira (Z_1 \times^{\upmu_n} \DGm) \times (Z_2 \times \DGm)$,
  $[z_1,z_2,x,y] \mapsto ([z_1,x], z_2, y^n)$, is an isomorphism
  over $S$, and we can simplify 
  \eqref{eq:psi-two-mu-s} to
  \begin{align*}
    \Psi(Z_1 \times Z_2 \xra{p_1 \times p_2} S_1 \times S_2)
    = 
    &
      -[(Z_1 \times^{\upmu_n} \underset{x}{\DGm})|_{x^n\not=1} \times Z_2
      \ra S_1 \times S_2]\\
    \notag
    &
      +[(Z_1 \times^{\upmu_n} \underset{x}{\DGm}) \times Z_2
      \ra S_1 \times S_2]\\
    \notag
    =
    & [(Z_1 \times^{\upmu_n} \upmu_n) \times Z_2
      \ra S_1 \times S_2]\\
    \notag
    =
    & [Z_1 \times Z_2
      \ra S_1 \times S_2].
  \end{align*}
  This example will be useful later on.
\end{example}

In fact, $\Psi$ induces a morphism
\begin{equation}
  \label{eq:10}
  \Psi \colon  K_0(\Var_S^{\upmu_n \times \upmu_n}) \ra K_0(\Var_S^{\upmu_n})
\end{equation}
of $K_0(\Var_\kk)$-modules.

Our next aim is to prove Proposition~\ref{p:Psi-associative}
which will later on imply associativity of the convolution
product.

Let $p \colon Z \ra S$ be an object of $\Var^{\upmu_n \times \upmu_n
  \times \upmu_n}_S$. Similarly as above we 
define
\begin{multline}
  \label{eq:psi-123}
  \Psi_{123}(Z \xra{p} S) := 
  -[(Z \times^{\upmu_n \times \upmu_n \times
    \upmu_n} \underset{x_1}{\DGm} \times \underset{x_2}{\DGm}
    \times \underset{x_3}{\DGm})|_{x_1^n+x_2^n+x_3^n=1}
    \xra{[z,x_1,x_2,x_3] \mapsto p(z)}
    S]\\
    +[(Z
    \times^{\upmu_n \times \upmu_n \times
    \upmu_n} \underset{x_1}{\DGm} \times \underset{x_2}{\DGm}
    \times \underset{x_3}{\DGm})|_{x_1^n+x_2^n+x_3^n=0}
    \xra{([z,x_1,x_2,x_3]) \mapsto p(z)} S] \in K_0(\Var_S^{\upmu_n})
\end{multline}
where the closed subvarieties of $Z \times^{\upmu_n \times \upmu_n \times \upmu_n}
\underset{x_1}{\DGm} \times \underset{x_2}{\DGm} \times
\underset{x_3}{\DGm}$ are equipped with the $\upmu_n$-action
$[z,x_1,x_2,x_3].t=[z,tx_1, tx_2, tx_3]=[z.(t,t,t), x_1, x_2,
x_3]$.
Again we obtain a morphism
\begin{equation*}
  \Psi_{123} \colon  K_0(\Var_S^{\upmu_n \times \upmu_n \times \upmu_n}) \ra
  K_0(\Var_S^{\upmu_n}) 
\end{equation*}
of $K_0(\Var_k)$-modules.

Similarly we associate to 
$(p \colon Z \ra S) \in \Var^{\upmu_n \times \upmu_n
  \times \upmu_n}_S$ the element
\begin{align}
  \label{eq:psi-13}
  \Psi_{13}&(Z) := 
  -[(Z \times^{\upmu_n \times \{1\} \times
    \upmu_n} \underset{x_1}{\DGm} \times \{1\}
    \times \underset{x_3}{\DGm})|_{x_1^n+x_3^n=1}
    \xra{[z,x_1,1,x_3] \mapsto p(z)}
    S]
  \\
  \notag 
  & +[(Z
    \times^{\upmu_n \times \{1\} \times
    \upmu_n} \underset{x_1}{\DGm} \times \{1\}
    \times \underset{x_3}{\DGm})|_{x_1^n+x_3^n=0}
    \xra{([z,x_1,1,x_3]) \mapsto p(z)} S] 
    \in K_0(\Var_S^{\upmu_n \times \upmu_n}).
\end{align}
Here the $\upmu_n \times \upmu_n$-action is
given by the two commuting $\upmu_n$-actions
$[z,x_1,1,x_3].s=[z,sx_1, 1, sx_3]=[z.(s,1,s), x_1, 1, x_3]$
and
$[z,x_1,1,x_3].t=[z.(1,t,1),x_1, 1, x_3]$, i.\,e.\ we have
$[z,x_1,1,x_3].(s,t) =[z.(s,t,s), x_1, 1, x_3]$.
As above we obtain a morphism
\begin{equation*}
  \Psi_{13} \colon  K_0(\Var_S^{\upmu_n \times \upmu_n \times \upmu_n}) \ra
  K_0(\Var_S^{\upmu_n \times \upmu_n}) 
\end{equation*}
of $K_0(\Var_k)$-modules.
Similarly we define $\Psi_{12}$ and $\Psi_{23}$.

\begin{remark}
  \label{rem:Psi-compatible-with-pushforward-pullback}
  If $f \colon  S \ra S'$ is a morphism of $\kk$-varieties, all maps
  $\Psi$, 
  $\Psi_{123}$, 
  $\Psi_{12}$,
  $\Psi_{13}$,
  $\Psi_{23}$
  are compatible with $f_!$ and $f^*,$ for example
  $\Psi(f_!(Z))=f_!(\Psi(Z))$ for $Z \in
  K_0(\Var_{S}^{\upmu_n \times \upmu_n})$
  and  
  $\Psi(f^*(Z))=f^*(\Psi(Z))$ for $Z \in
  K_0(\Var_{S'}^{\upmu_n \times \upmu_n})$.
  For $f_!$ this is obvious. For $f^*$ one uses the fact that
  $Z \times \DGm \times \DGm \ra Z \times^{\upmu_n \times
    \upmu_n} \DGm \times \DGm$ is a $(\upmu_n \times
  \upmu_n)$-torsor and hence its pullback under the base change
  morphism $f$ is again such a torsor.
\end{remark}

\begin{proposition}
  [{\cite[Prop.~5.5]{guibert-loeser-merle-convolution}}]
  \label{p:Psi-associative}
  We have
  \begin{equation*}
    \Psi_{123}=\Psi \circ \Psi_{13}
    =\Psi \circ \Psi_{12}
    =\Psi \circ \Psi_{23}
  \end{equation*}
  as morphisms 
  $K_0(\Var_S^{\upmu_n \times \upmu_n \times \upmu_n}) \ra
  K_0(\Var_S^{\upmu_n})$
  of $K_0(\Var_\kk)$-modules.
\end{proposition}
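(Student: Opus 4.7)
The plan is to prove $\Psi_{123} = \Psi \circ \Psi_{13}$ by unraveling the iterated balanced product and applying an explicit change of variables; since $\Psi_{123}$ is manifestly invariant under permutations of the three $\upmu_n$-factors acting on $Z$, the identities $\Psi_{123} = \Psi \circ \Psi_{12}$ and $\Psi_{123} = \Psi \circ \Psi_{23}$ then follow from the first by relabeling these factors.

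Writing out $\Psi(\Psi_{13}(Z))$ yields a signed sum $\sum_{\epsilon, \delta \in \{0,1\}} (-1)^{\epsilon+\delta}[C^{\epsilon, \delta}]$, where
\[
C^{\epsilon, \delta} = \Bigl((Z \times^{\upmu_n \times 1 \times \upmu_n} \underset{x_1}{\DGm} \times \{1\} \times \underset{x_3}{\DGm})\big|_{x_1^n + x_3^n = \epsilon} \times^{\upmu_n \times \upmu_n} \underset{y_1}{\DGm} \times \underset{y_2}{\DGm}\Bigr)\Big|_{y_1^n + y_2^n = \delta}.
\]
Pulling back through both balanced product quotients, $C^{\epsilon, \delta}$ is the quotient of the subvariety of $Z \times \DGm_{x_1} \times \DGm_{x_3} \times \DGm_{y_1} \times \DGm_{y_2}$ defined by $x_1^n + x_3^n = \epsilon$ and $y_1^n + y_2^n = \delta$ under the $\upmu_n^4$-action sending $(s_1, s_3, s, t)\cdot (z, x_1, x_3, y_1, y_2)$ to $(z.(s_1 s, t, s_3 s), s_1^{-1}x_1, s_3^{-1}x_3, s^{-1}y_1, t^{-1}y_2)$, while the residual diagonal $\upmu_n$-action scales $(y_1, y_2)$ by a common $r$.

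The key step is the substitution $u_1 := x_1 y_1$, $u_2 := y_2$, $u_3 := x_3 y_1$, keeping $y_1$ as a fourth coordinate. Reparameterizing $\upmu_n^4$ via $(r_1, r_2, r_3, s) := (s_1 s, t, s_3 s, s)$, the action decouples as $\upmu_n^3 \times \upmu_n$: the $\upmu_n^3$-factor acts on $(z, u_1, u_2, u_3)$ by $(z.(r_1, r_2, r_3), r_1^{-1} u_1, r_2^{-1} u_2, r_3^{-1} u_3)$, exactly the rule defining $Z \times^{\upmu_n^3} \DGm^3$, while the extra $\upmu_n$-factor acts only on $y_1$ via $y_1 \mapsto s^{-1} y_1$. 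Quotienting out this last $\upmu_n$ replaces $y_1$ by $w := y_1^n \in \DGm$, and the two defining equations become $u_1^n + u_3^n = \epsilon w$ and $w + u_2^n = \delta$; eliminating $w$ leaves the locally closed condition $u_1^n + u_3^n = \epsilon(\delta - u_2^n)$, $u_2^n \neq \delta$. The residual diagonal $\upmu_n$-action on $(u_1, u_2, u_3)$ matches the action used in the definition of $\Psi_{123}$.

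Unpacking the four cases realizes each $C^{\epsilon, \delta}$ as an explicit locally closed subvariety of $Z \times^{\upmu_n^3} \DGm_{u_1} \times \DGm_{u_2} \times \DGm_{u_3}$: $C^{1, 0} = \{u_1^n + u_2^n + u_3^n = 0\}$ (the side condition $u_2^n \neq 0$ is automatic), $C^{1, 1} = \{u_1^n + u_2^n + u_3^n = 1,\ u_2^n \neq 1\}$, while $C^{0, 0}$ and $C^{0, 1}$ both sit inside $\{u_1^n + u_3^n = 0\}$ with difference $C^{0, 0} \setminus C^{0, 1}$ equal to the locus $\{u_2^n = 1,\ u_1^n + u_3^n = 0\}$. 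This locus coincides with $\{u_1^n + u_2^n + u_3^n = 1\} \setminus C^{1, 1}$, so the signed sum $\sum (-1)^{\epsilon+\delta}[C^{\epsilon, \delta}]$ telescopes via scissor relations to $-[\{\textstyle\sum_i u_i^n = 1\}] + [\{\textstyle\sum_i u_i^n = 0\}] = \Psi_{123}(Z)$. The main obstacle is the careful bookkeeping of the $\upmu_n$-equivariant structure through the change of variables and the sign accounting in the final scissor manipulation; compatibility with the underlying $K_0(\Var_\kk)$-module structure comes for free from the functoriality noted in Remark~\ref{rem:Psi-compatible-with-pushforward-pullback}.
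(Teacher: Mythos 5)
Your route is essentially the paper's: expand $\Psi(\Psi_{13}(Z))$ into a signed sum of four quotients of $Z \times \DGm^4$, pass to the coordinates $u_1 = x_1 y_1$, $u_2 = y_2$, $u_3 = x_3 y_1$ together with $y_1$, reparameterize $\upmu_n^4$ so that one factor acts only on $y_1$, quotient it out by passing to $w = y_1^n$, eliminate $w$, and finish by scissor relations. Your change of variables, the decoupling of the group action, and the identification of the four loci $C^{\epsilon,\delta}$ all match the paper's $D_{\delta,\epsilon}$; and passing the two remaining identities to $\Psi\circ\Psi_{12}$, $\Psi\circ\Psi_{23}$ by permuting the three slots is a clean shortcut.

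The last sentence, however, has a sign slip. By your own identifications, $[C^{0,0}] - [C^{0,1}] = [\{u_1^n + u_3^n = 0,\ u_2^n = 1\}]$, which equals $[\{\sum_i u_i^n = 1\}] - [C^{1,1}]$, so
\[
\sum_{\epsilon,\delta}(-1)^{\epsilon+\delta}[C^{\epsilon,\delta}]
= [C^{1,1}] - [C^{1,0}] - [C^{0,1}] + [C^{0,0}]
= [\{\textstyle\sum_i u_i^n = 1\}] - [\{\textstyle\sum_i u_i^n = 0\}],
\]
\emph{not} its negative. You can see this concretely for $n=1$ and $Z=\Spec\kk$: all the actions are then trivial, both $\Psi$ and $\Psi_{13}$ reduce to the identity, so $\Psi(\Psi_{13}(\Spec\kk)) = [\Spec\kk]$, whereas $-[\{\sum x_i=1\}\cap\DGm^3] + [\{\sum x_i=0\}\cap\DGm^3] = -(\DL^2-3\DL+3)+(\DL^2-3\DL+2) = -[\Spec\kk]$. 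Note that the paper's displayed computation also lands on $+[\{\Sigma=1\}]-[\{\Sigma=0\}]$; its subsequent appeal to \eqref{eq:psi-123} in fact exposes a sign typo in that display -- the $\pm$ there should be reversed for the proposition (and for $\Psi_{123}$ to be the identity on trivial actions) to hold. So your error is a genuine arithmetic slip in the telescoping, which coincidentally matches the misprinted formula rather than the correct value.
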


\begin{proof}
  Let $p \colon Z \ra S$ be an object of
  $\Var^{\upmu_n \times \upmu_n \times \upmu_n}_S$. It is enough to
  show that
  $\Psi_{123}(Z) =\Psi(\Psi_{13}(Z)) =\Psi(\Psi_{12}(Z))
  =\Psi(\Psi_{23}(Z))$
  in $K_0(\Var_S^{\upmu_n})$. We only prove
  $\Psi_{123}(Z) =\Psi(\Psi_{13}(Z))$ and leave the remaining
  cases to the reader.
  
  From 
  \eqref{eq:psi-two-mu-s}
  and 
  \eqref{eq:psi-13}
  we obtain
  \begin{align}
    \label{eq:psi-psi13}
    \Psi(\Psi_{13}(Z)) = 
    & -\Psi([(Z \times^{\upmu_n \times \{1\}
      \times \upmu_n} \underset{x_1}{\DGm} \times \{1\} \times
    \underset{x_3}{\DGm})|_{x_1^n+x_3^n=1} \ra
    S])\\
    \notag 
    & +\Psi([(Z \times^{\upmu_n \times \{1\} \times \upmu_n}
      \underset{x_1}{\DGm} \times \{1\} \times
      \underset{x_3}{\DGm})|_{x_1^n+x_3^n=0} \ra S])\\
    \notag = &
    \sum_{\delta, \epsilon \in \{0,1\}}
    (-1)^{\delta+\epsilon}
    [D_{\delta, \epsilon} \ra S]
  \end{align}
  where 
  \begin{align*}
    D_{\delta, \epsilon} & :=
    ((Z \times^{\upmu_n \times \{1\} \times \upmu_n}
    \underset{x_1}{\DGm} \times \{1\} \times
    \underset{x_3}{\DGm})|_{x_1^n+x_3^n=\delta} \times^{\upmu_n \times
      \upmu_n} \underset{y_1}{\DGm} \times
    \underset{y_2}{\DGm})|_{y_1^n+y_2^n=\epsilon}\\
    \notag
    & =
    (Z \times^{\upmu_n \times \upmu_n}
    \underset{x_1}{\DGm} \times
    \underset{x_3}{\DGm} \times^{\upmu_n \times
      \upmu_n} \underset{y_1}{\DGm} \times
    \underset{y_2}{\DGm})|_{x_1^n+x_3^n=\delta, \atop
      y_1^n+y_2^n=\epsilon}\\
    \notag
    & =
    \frac{(Z \times
      \underset{x_1}{\DGm} \times
      \underset{x_3}{\DGm} \times \underset{y_1}{\DGm} \times
      \underset{y_2}{\DGm})|_{x_1^n+x_3^n=\delta, \atop
        y_1^n+y_2^n=\epsilon}}
    {\upmu_n \times \upmu_n \times \upmu_n \times \upmu_n}.
  \end{align*}
  Here, by the definitions of the quotients in
  \eqref{eq:psi-two-mu-s}
  and \eqref{eq:psi-13},
  the quotient is formed with respect to the
  $(\upmu_n)^{\times 4}$-action 
  \begin{equation*}
    (z,x_1, x_3, y_1, y_2).(s,t,u,v)=
    (z.(su,v,tu),s^{-1}x_1, t^{-1}x_3, u^{-1}y_1, v^{-1}y_2).    
  \end{equation*}
  and $D_{\delta,\epsilon}$ is a $\upmu_n$-variety with action
  \begin{equation*}
    [z,x_1,x_3,y_1,y_2].m=[z,x_1,x_3,my_1,my_2]=
    [z.(m,m,m),x_1,x_3,y_1,y_2].
  \end{equation*}

  The coordinate changes $a_1=x_1y_1$, $a_2=x_3y_1$, $b=y_1$,
  $a_3=y_2$ in $(\DGm)^{\times 4}$ and $s'=su$,
  $t'=tu$, $u=u$, $v=v$ in $(\upmu_n)^{\times 4}$ show that
  \begin{equation*}
    D_{\delta,\epsilon}
    \cong
    \frac{(Z \times
      \underset{a_1}{\DGm} \times
      \underset{a_2}{\DGm} \times \underset{b}{\DGm} \times
      \underset{a_3}{\DGm})|_{a_1^n+a_2^n=\delta b^n, \atop
        b^n+a_3^n=\epsilon}}
    {\upmu_n \times \upmu_n \times \upmu_n \times \upmu_n}.
  \end{equation*}
  where the quotient is formed with respect to the
  $(\upmu_n)^{\times 4}$-action 
  \begin{equation*}
    (z,a_1, a_2, b, a_3).(s',t',u,v)=
    (z.(s',v,t'),s'^{-1}a_1, t'^{-1}a_2, u^{-1}b, v^{-1}a_3)    
  \end{equation*}
  and the $\upmu_n$-action on this quotient is given by 
  \begin{equation*}
    [z,a_1,a_2,b,a_3].m=[z,ma_1,ma_2,mb,ma_3]=
    [z.(m,m,m),a_1,a_2,b,a_3].
  \end{equation*}
  The quotient of 
  $\underset{a_1}{\DGm} \times
  \underset{a_2}{\DGm} \times 
  \underset{b}{\DGm} \times 
  \underset{a_3}{\DGm}|_{a_1^n+a_2^n=\delta b^n, \atop
        b^n+a_3^n=\epsilon}$
  under the obvious action of $\{1\} \times \{1\} \times \upmu_n
  \times \{1\}$ on the factor $\DGm$ with coordinate $b$ is 
  clearly isomorphic to 
  \begin{equation*}
    Q_{\delta, \epsilon}:=(\underset{a_1}{\DGm} \times
    \underset{a_2}{\DGm} \times 
    \underset{a_3}{\DGm})|_{a_1^n+a_2^n=\delta (\epsilon -a_3^n),
      \atop 
      a_3^n\not=\epsilon}
  \end{equation*}
  So we obtain
  \begin{equation*}
    D_{\delta, \epsilon} \cong Z \times^{\upmu_n \times \upmu_n \times
      \upmu_n} Q_{\delta,\epsilon} 
  \end{equation*}
  where the quotient is formed with respect
  to the $(\upmu_n)^{\times 3}$-action
  \begin{equation*}
    (z,a_1, a_2, a_3).(s',t',v)=
    (z.(s',v,t'),s'^{-1}a_1, t'^{-1}a_2, v^{-1}a_3)    
  \end{equation*}
  and the $\upmu_n$-action on this quotient is given by 
  \begin{equation*}
    [z,a_1,a_2,a_3].m=
    [z.(m,m,m),a_1,a_2,a_3]=
    [z,ma_1,ma_2,ma_3].
  \end{equation*}
  Continuing the computation
  \eqref{eq:psi-psi13} we obtain
  \begin{align*}
    \Psi(\Psi_{13}(Z)) 
    = &
    + [Z \times^{\upmu_n \times \upmu_n
      \times \upmu_n}
    (\underset{a_1}{\DGm} \times
    \underset{a_2}{\DGm} \times
    \underset{a_3}{\DGm})|_{a_1^n+a_2^n+a_3^n=1
      \atop 1 \not= a_3^n}
    \ra S]\\
    \notag
    & -[Z \times^{\upmu_n \times \upmu_n
      \times \upmu_n}
    (\underset{a_1}{\DGm} \times
      \underset{a_2}{\DGm} \times
      \underset{a_3}{\DGm})|_{a_1^n+a_2^n+a_3^n=0}
    \ra S]\\
    \notag
    & -[Z \times^{\upmu_n \times \upmu_n
      \times \upmu_n}
    (\underset{a_1}{\DGm} \times
    \underset{a_2}{\DGm} \times \underset{a_3}{\DGm})|_{a_1^n+a_2^n=0
      \atop 1 \not= a_3^n}
    \ra S]\\
    \notag
    & + [Z \times^{\upmu_n \times \upmu_n
      \times \upmu_n}
    (\underset{a_1}{\DGm} \times
    \underset{a_2}{\DGm} \times \underset{a_3}{\DGm})|_{a_1^n+a_2^n=0}
    \ra S]
  \end{align*}
  The last two summands simplify to 
  \begin{equation*}
    +[Z \times^{\upmu_n \times \upmu_n
      \times \upmu_n}
    (\underset{a_1}{\DGm} \times
    \underset{a_2}{\DGm} \times
    \underset{a_3}{\DGm})|_{a_1^n+a_2^n=0 
      \atop 1=a_3^n} 
    \ra S].
  \end{equation*}
  The two conditions
  $a_1^n+a_2^n=0$ and $1=a_3^n$ are equivalent to
  the two conditions
  $a_1^n+a_2^n+a_3^n=1$ and $1=a_3^n$.
  Hence we can further simplify and obtain
  \begin{align*}
    \Psi(\Psi_{13}(Z)) = &
    + [Z \times^{\upmu_n \times \upmu_n
      \times \upmu_n}
    (\underset{a_1}{\DGm} \times
    \underset{a_2}{\DGm} \times
    \underset{a_3}{\DGm})|_{a_1^n+a_2^n+a_3^n=1}
    \ra S]\\
    & -[Z \times^{\upmu_n \times \upmu_n
      \times \upmu_n}
    (\underset{a_1}{\DGm} \times
      \underset{a_2}{\DGm} \times
      \underset{a_3}{\DGm})|_{a_1^n+a_2^n+a_3^n=0}
    \ra S]\\
    = &
    \Psi_{123}(Z).
  \end{align*}
  where the last equality holds by definition
  \eqref{eq:psi-123}.
\end{proof}

\begin{definition}
  [{Convolution product}]
  \label{d:convolution}
  The convolution product $*$ on $K_0(\Var_S^{\upmu_n})$ is
  defined as the $K_0(\Var_\kk)$-linear composition
  \begin{equation}
    \label{eq:25}
    * \colon K_0(\Var_S^{\upmu_n}) \otimes_{K_0(\Var_\kk)}
    K_0(\Var_S^{\upmu_n})  
    \xra{\times_S} K_0(\Var_S^{\upmu_n \times \upmu_n})
    \xra{\Psi}  K_0(\Var_S^{\upmu_n})
  \end{equation}
  where the first map $\times_S$ is 
  the $K_0(\Var_\kk)$-linear map
  induced by mapping a pair
  $(A,B)$ of 
  $S$-varieties with good $\upmu_n$-action to the class of the
  $S$-variety $|A \times_S B|$ with good $(\upmu_n \times
  \upmu_n)$-action. 
\end{definition}

More explicitly, if $A \ra S$ and $B \ra S$ are
$S$-varieties with good $\upmu_n$-action, then
\begin{align}
  \label{eq:convolution-explicit}
  [A \ra S] * [B \ra S] = 
  & -[(|A \times_S B| \times^{\upmu_n \times \upmu_n}
    \underset{x}{\DGm} \times 
    \underset{y}{\DGm})|_{x^n+y^n=1} \ra S]\\
  \notag
  & +[(|A \times_S B| \times^{\upmu_n \times \upmu_n}
    \underset{x}{\DGm} \times 
    \underset{y}{\DGm})|_{x^n+y^n=0}
    \ra S]
  \\
  \notag
  =
  & -[|(A \times_S B \times^{\upmu_n \times \upmu_n}
    \underset{x}{\DGm} \times 
    \underset{y}{\DGm})|_{x^n+y^n=1}| \ra S]
  \\
  \notag
  & +[|(A \times_S B \times^{\upmu_n \times \upmu_n}
    \underset{x}{\DGm} \times 
    \underset{y}{\DGm})|_{x^n+y^n=0}|
    \ra S]. 
\end{align}
The second equality comes from the fact that taking the reduced
subscheme structure commutes with fiber 
products 
(\cite[Prop.~4.34]{goertz-wedhorn-AGI})
and with quotients under the action of a finite group.

\begin{remark}
  \label{rem:action-on-B-trivial}
  Let $A \ra S$ and $B \ra S$ be
  $S$-varieties with good $\upmu_n$-action, and assume that the
  $\upmu_n$-action on $B$ is trivial.
  Similar as in
  Example~\ref{ex:p-product-over-k-trivial-on-one-factor}
  we deduce from \eqref{eq:convolution-explicit} that
  \begin{multline*}
    [A] * [B]
    = 
    -[|((A \times^{\upmu_n}
    \underset{x}{\DGm}) \times_S 
    (B \times \underset{y'}{\DGm}))|_{x^n+y'=1}|]
    +[|((A \times^{\upmu_n}
    \underset{x}{\DGm}) \times_S 
    (B \times \underset{y'}{\DGm}))|_{x^n+y'=0}|]
    \\
    = 
    -[|((A \times^{\upmu_n}
    \underset{x}{\DGm}) \times_S B)|_{x^n\not=1}|]
    +[|(A \times^{\upmu_n} \DGm) \times_S B|]
    \\
    = [|(A \times^{\upmu_n} \upmu_n) \times_S B|]
    = [|A \times_S B|]= [A] [B].
  \end{multline*}
\end{remark}

\begin{proposition}
  [{\cite[Prop.~5.2]{guibert-loeser-merle-convolution}}]
  \label{p:convolution-comm-ass-unit}
  Let $S$ be a $\kk$-variety and $n \geq 1$.
  The convolution product $*$ turns $K_0(\Var_S^{\upmu_n})$ into
  an associative commutative unital $K_0(\Var_\kk)$-algebra. The
  identity element is  
  the class of $(\id_S \colon S \ra S)$ where $\upmu_n$ acts
  trivially on $S$.  
  We denote this ring as $(K_0(\Var_S^{\upmu_n}),*)$.
\end{proposition}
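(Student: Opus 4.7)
The plan is to verify separately that $*$ is $K_0(\Var_\kk)$-bilinear, commutative, unital, and associative; bilinearity is automatic from the definition \eqref{eq:25} as a composition of $K_0(\Var_\kk)$-linear maps.

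For commutativity, it suffices to check $[A]*[B]=[B]*[A]$ for $S$-varieties $A$ and $B$ with good $\upmu_n$-action. The combined swap $(a,b)\leftrightarrow(b,a)$ on $|A\times_S B|$ together with $(x,y)\leftrightarrow(y,x)$ on $\DGm\times\DGm$ yields an isomorphism over $S$ that preserves the closed subvarieties cut out by $x^n+y^n=1$ and $x^n+y^n=0$ and is equivariant for the residual diagonal $\upmu_n$-action; the only effect is swapping the two factors of $\upmu_n\times\upmu_n$, which is absorbed once one passes to the balanced product. Hence the two sides of \eqref{eq:convolution-explicit} agree.

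For the unit, I show $[A]*[S\xra{\id}S]=[A]$ (with $\upmu_n$ acting trivially on $S$); combined with commutativity this gives the two-sided identity. This is immediate from Remark~\ref{rem:action-on-B-trivial} applied with $B=S$: the action on $S$ is trivial, so $[A]*[S]=[|A\times_S S|]=[A]$.

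Associativity is the substantive step, and it reduces to Proposition~\ref{p:Psi-associative}. Given $A,B,C\in\Var_S^{\upmu_n}$, regard $|A\times_S B\times_S C|$ as an object of $\Var_S^{\upmu_n\times\upmu_n\times\upmu_n}$ via the coordinatewise action. The key compatibility I need is
\[
\Psi_{12}\bigl([A\times_S B\times_S C]\bigr)=\Psi\bigl([A\times_S B]\bigr)\times_S[C]
\]
in $K_0(\Var_S^{\upmu_n\times\upmu_n})$, and analogously for $\Psi_{23}$ vs.\ $[A]\times_S \Psi([B\times_S C])$. This is a tautology once one unwinds the definition of $\Psi_{12}$ obtained from \eqref{eq:psi-13} by relabeling indices: the third copy of $\upmu_n$ acts only on $C$ and trivially on the factors $\DGm_{x_1}\times\DGm_{x_2}$ used in $\Psi_{12}$, so the balanced product factors as $(|A\times_S B|\times^{\upmu_n\times\upmu_n}\DGm_{x_1}\times\DGm_{x_2})\times_S C$, and the conditions on $x_1^n+x_2^n$ involve only the first factor. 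Granted this, one obtains $(A*B)*C=\Psi(\Psi_{12}([A\times_S B\times_S C]))$ and symmetrically $A*(B*C)=\Psi(\Psi_{23}([A\times_S B\times_S C]))$; by Proposition~\ref{p:Psi-associative} both equal $\Psi_{123}([A\times_S B\times_S C])$, which proves associativity. The main obstacle is the bookkeeping for the compatibility identity for $\Psi_{12}$; it is routine but requires care to track the residual $(\upmu_n\times\upmu_n)$-equivariance and the pushforward along the projection to $S$.
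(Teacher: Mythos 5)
Your proof is correct and follows essentially the same route as the paper's: commutativity by the evident swap, the unit via Remark~\ref{rem:action-on-B-trivial}, and associativity by writing $(A*B)*C=\Psi(\Psi_{12}([|A\times_S B\times_S C|]))$ and $A*(B*C)=\Psi(\Psi_{23}([|A\times_S B\times_S C|]))$ and invoking Proposition~\ref{p:Psi-associative}. The only difference is that you make explicit the compatibility identity $\Psi_{12}([|A\times_S B\times_S C|])=\Psi([|A\times_S B|])\times_S[C]$ (and its $\Psi_{23}$ analogue), which the paper states without comment.
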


\begin{proof}
  Clearly, the convolution product is commutative.
  Remark~\ref{rem:action-on-B-trivial}
  shows that $[\id_S \colon S \ra S]$ is the identity with respect to
  the convolution 
  product.  
  Associativity follows from 
  Proposition~\ref{p:Psi-associative}:
  \begin{multline*}
    ([A] * [B]) * [C] 
    = \Psi(\Psi_{12}([|A \times_S B \times_S C|]) 
    = \Psi(\Psi_{23}([|A \times_S B \times_S C|]) 
    = [A] * ([B] * [C]).
  \end{multline*}
  Here we again use that passing to the reduced subscheme structure
  commutes with fiber products and taking quotients under the
  action of a finite group.
\end{proof}

\begin{remark}
  \label{rem:convolution-n=1}
  For $n=1$ the convolution product $*$ on $K_0(\Var_S^{\upmu_1})$
  coincides with the product on
  $K_0(\Var_S)=K_0(\Var_S^{\upmu_1})$,
  so $K_0(\Var_S)=(K_0(\Var_S^{\upmu_1}),*)$ as $K_0(\Var_\kk)$-algebras.
  This follows immediately from 
  Remark~\ref{rem:action-on-B-trivial}.
\end{remark}

Let $(Z \ra S) \in \Var_S^{\upmu_n \times \upmu_n}$ and assume
that $n'=dn$ is a multiple of $n$. Then the morphism
$Z \times \DGm \times \DGm \ra Z \times \DGm \times \DGm$,
$(z,x,y) \mapsto (z,x^d,y^d)$ defines an isomorphism
\begin{equation}
  \label{eq:22}
  Z \times^{\upmu_{n'} \times \upmu_{n'}} \DGm \times \DGm \sira 
  Z \times^{\upmu_n \times \upmu_n} \DGm \times \DGm
\end{equation}
in $\Var_S^{\upmu_{n'}}$.
This implies that $\Psi$ is compatible with the morphisms
$K_0(\Var_S^{\upmu_n \times \upmu_n}) \ra K_0(\Var_S^{\upmu_{n'}
  \times \upmu_{n'}})$
and 
$K_0(\Var_S^{\upmu_n}) \ra K_0(\Var_S^{\upmu_{n'}})$,
cf.\ \eqref{eq:28}, and so is the 
first map in \eqref{eq:25}.
We deduce that the obvious morphism
\begin{equation*}
  (K_0(\Var_S^{\upmu_n}),*) \ra (K_0(\Var_S^{\upmu_{n'}}),*)
\end{equation*}
is a map of $K_0(\Var_\kk)$-algebras.
Hence convolution turns $K_0(\Var_S^{\hat{\upmu}})$ 
into an associative commutative unital 
$K_0(\Var_\kk)$-algebra; we denote this algebra by 
$(K_0(\Var_S^{\hat{\upmu}}),*)$. 
 
If $f \colon T \ra S$ is a morphism of $\kk$-varieties, 
the 
pullback maps 
$f^* \colon (K_0(\Var_S^{\upmu_n}),*) \ra
(K_0(\Var_T^{\upmu_n}),*)$
and
$f^* \colon (K_0(\Var_S^{\hat{\upmu}}),*) \ra
(K_0(\Var_T^{\hat{\upmu}}),*)$
are maps of $K_0(\Var_\kk)$-algebras (use
Remark~\ref{rem:Psi-compatible-with-pushforward-pullback}
and that the first map in \eqref{eq:25} is compatible
with pullbacks). 


We also want to define a convolution product on
$\mathcal{M}_S^{\upmu_n}$ and $\mathcal{M}_S^{\hat{\upmu}}$.

Consider 
the localization of  
$(K_0(\Var_S^{\upmu_n}),*)$ at the multiplicative set $\{1,
\DL_S, \DL_S * \DL_S, \dots\}$. The $n$-fold convolution
product of $\DL_S=[\DA^1_S]$ with itself is $[\DA^n_S]$ 
and we have 
$[A]*[\DA^n_S]=[A][\DA^n_S]$ for $[A] \in K_0(\Var_S^{\upmu_n})$,
by
Remark~\ref{rem:action-on-B-trivial}. Hence the underlying
abelian group of
this localization is canonically identified with the underlying
abelian group of $\mathcal{M}_S^{\upmu_n}$. We can therefore
denote the above localization by $(\mathcal{M}_S^{\upmu_n},*)$.

Because the structure morphism $K_0(\Var_\kk) \ra
(K_0(\Var_S^{\upmu_n}),*)$ sends $\DL_\kk$ to $\DL_S$ we obtain
a canonical isomorphism
\begin{equation}
  \label{eq:26}
  \mathcal{M}_\kk \otimes_{K_0(\Var_\kk)} (K_0(\Var_S^{\upmu_n}),*)
  \sira
  (\mathcal{M}_S^{\upmu_n},*)
\end{equation}
of $\mathcal{M}_\kk$-algebras which we will often treat as an
equality in the following. Its underlying morphism of 
$\mathcal{M}_\kk$-modules coincides with \eqref{eq:15}.

Similarly, we define the convolution product $*$ on
$\mathcal{M}_S^{\hat{\upmu}}$ and obtain the $\mathcal{M}_\kk$-algebra
$\mathcal{M}_\kk \otimes_{K_0(\Var_\kk)} (K_0(\Var_S^{\hat{\upmu}}),*)=
(\mathcal{M}_S^{\hat{\upmu}},*)$. 
The map $\Psi$ from \eqref{eq:10} gives in the obvious way rise to a morphism
\begin{equation*}
  \Psi 
  \colon  
  \mathcal{M}_S^{\hat{\upmu} \times \hat{\upmu}} 
  \ra 
  \mathcal{M}_S^{\hat{\upmu}}
\end{equation*}
of $\mathcal{M}_\kk$-modules; 
the 
convolution product $*$ on
$\mathcal{M}_S^{\hat{\upmu}}$ is then given as the composition
\begin{equation*}
  * \colon \mathcal{M}_S^{\hat{\upmu}} \otimes_{\mathcal{M}_\kk}
  \mathcal{M}_S^{\hat{\upmu}}
  \xra{\times_S} \mathcal{M}_S^{\hat{\upmu} \times \hat{\upmu}}
  \xra{\Psi} \mathcal{M}_S^{\hat{\upmu}}.
\end{equation*}

Given $f \colon T \ra S$ as above, we obtain pullback maps
$f^* \colon (\mathcal{M}_S^{\upmu_n},*) \ra (\mathcal{M}_T^{\upmu_n},*)$
and
$f^* \colon (\mathcal{M}_S^{\hat{\upmu}},*) \ra
(\mathcal{M}_T^{\hat{\upmu}},*)$ of
$\mathcal{M}_\kk$-algebras. 
Under the isomorphisms \eqref{eq:26} they are just obtained by
scalar extension along $K_0(\Var_\kk) \ra \mathcal{M}_\kk$ from
the previous pullback maps.

\subsection{Convolution of varieties over
  \texorpdfstring{$\DA^1_\kk$}{A1}}
\label{sec:conv-vari-over}

We now use that $\DA^1_\kk$ is a commutative 
group
$\kk$-variety.
Let $\add \colon  \DA^1_\kk \times \DA^1_\kk \ra \DA^1_\kk$, $(x,y)
\mapsto x+y$, be the 
addition morphism. Let $n \geq 1$.

\begin{definition}
  [{Convolution over $\DA^1_\kk$}]
  \label{d:convolution-over-A1}
  The convolution product $\star$ on $K_0(\Var_{\DA^1_\kk}^{\upmu_n})$ is
  defined as the $K_0(\Var_\kk)$-linear composition
  \begin{equation}
    \label{eq:convolution-A1}
    \star \colon K_0(\Var_{\DA^1_\kk}^{\upmu_n}) \otimes_{K_0(\Var_\kk)}
    K_0(\Var_{\DA^1_\kk}^{\upmu_n})  
    \xra{\times}
    K_0(\Var_{\DA^1_\kk \times \DA^1_\kk}^{\upmu_n \times \upmu_n})
    \xra{\add_!}
    K_0(\Var_{\DA^1_\kk}^{\upmu_n \times \upmu_n})
    \xra{\Psi}  K_0(\Var_{\DA^1_\kk}^{\upmu_n})
  \end{equation}
  where the first map $\times$ is 
  the $K_0(\Var_\kk)$-linear map
  induced by mapping a pair
  $(A,B)$ of 
  $\DA^1_\kk$-varieties with good $\upmu_n$-action to the
  $S$-variety $A \times B$ with good $(\upmu_n \times
  \upmu_n)$-action. 
\end{definition}

By Remark~\ref{rem:Psi-compatible-with-pushforward-pullback} we
have
\begin{equation*}
  A \star B
  =\Psi(\add_!(A \times B)) 
  =\add_!(\Psi(A \times B))
\end{equation*}
for $A$, $B \in 
K_0(\Var_{\DA^1_\kk}^{\upmu_n})$.


\begin{remark}
  \label{rem:action-on-B-trivial-over-A1}
  Let $A \xra{\alpha} \DA^1_\kk$ and $B \xra{\beta} \DA^1_\kk$ be
  $\DA^1_\kk$-varieties with good $\upmu_n$-action, and assume
  that the 
  $\upmu_n$-action on $B$ is trivial.
  Then 
  Example~\ref{ex:p-product-over-k-trivial-on-one-factor}
  implies that
  \begin{equation*}
    [A \xra{\alpha} \DA^1_\kk] \star [B \xra{\beta} \DA^1_\kk] =
    [A \times B \xra{\alpha \circledast \beta} \DA^1_\kk]
  \end{equation*}
  where $(\alpha \circledast \beta)(a,b)=\alpha(a)+\beta(b)$;
  the $\upmu_n$-action on $A \times B$ is the obvious diagonal
  action $(a,b).t=(a.t,b.t)=(a.t,b)$.
\end{remark}

\begin{remark}
  \label{rem:no-group-action}
  In the case $n=1$ the convolution product $\star$ on
  $K_0(\Var_{\DA^1_\kk}^{\upmu_1})= K_0(\Var_{\DA^1_\kk})$
  satisfies
  \begin{equation*}
    [A \xra{\alpha} \DA^1_\kk] \star [B \xra{\beta} \DA^1_\kk] =
    [A \times B \xra{\alpha \circledast \beta} \DA^1_\kk]
  \end{equation*}
  where $A \xra{\alpha} \DA^1_\kk$ and $B \xra{\beta} \DA^1_\kk$ 
  are 
  $\DA^1_\kk$-varieties.
  This is a special case of
  Remark~\ref{rem:action-on-B-trivial-over-A1}.   
\end{remark}

\begin{proposition}
  \label{p:A1-convolution-comm-ass-unit}
  The convolution product $\star$ 
  turns 
  $K_0(\Var_{\DA^1_\kk}^{\upmu_n})$
  into an associative commutative $K_0(\Var_\kk)$-algebra with
  identity element 
  $[\Spec k \xra{0} \DA^1_\kk]$.
  We denote this ring by $(K_0(\Var_{\DA^1_\kk}^{\upmu_n}), \star)$.
\end{proposition}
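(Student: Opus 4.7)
The plan is to parallel the proof of Proposition~\ref{p:convolution-comm-ass-unit}. Commutativity of $\star$ will be immediate: the external product $\times$, the addition map $\add \colon \DA^1_\kk \times \DA^1_\kk \ra \DA^1_\kk$, and the map $\Psi$ are each symmetric in their two arguments. For the identity, I would invoke Remark~\ref{rem:action-on-B-trivial-over-A1} applied to $[\Spec \kk \xra{0} \DA^1_\kk]$, whose $\upmu_n$-action is necessarily trivial: this gives $[A \xra{\alpha} \DA^1_\kk] \star [\Spec \kk \xra{0} \DA^1_\kk] = [A \times \Spec \kk \xra{\alpha \circledast 0} \DA^1_\kk] = [A \xra{\alpha} \DA^1_\kk]$, and commutativity handles the other side.

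For associativity, I plan to introduce a symmetric triple convolution and reduce both $(A \star B) \star C$ and $A \star (B \star C)$ to it via Proposition~\ref{p:Psi-associative}. For $A,B,C$ in $K_0(\Var_{\DA^1_\kk}^{\upmu_n})$, the external product $A \times B \times C$ lies in $K_0(\Var_{(\DA^1_\kk)^3}^{\upmu_n \times \upmu_n \times \upmu_n})$; pushing forward along the triple sum $\add^{(3)} \colon (\DA^1_\kk)^3 \ra \DA^1_\kk$, $(x,y,z) \mapsto x+y+z$, produces an element of $K_0(\Var_{\DA^1_\kk}^{\upmu_n \times \upmu_n \times \upmu_n})$ to which $\Psi_{12}$, $\Psi_{23}$, and $\Psi_{123}$ all apply. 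The target identities will be
\[ (A \star B) \star C = \Psi\bigl(\Psi_{12}\bigl(\add^{(3)}_!(A \times B \times C)\bigr)\bigr) \]
together with its analogue with $\Psi_{23}$ for $A \star (B \star C)$; Proposition~\ref{p:Psi-associative} then equates both sides with $\Psi_{123}\bigl(\add^{(3)}_!(A \times B \times C)\bigr)$, concluding the proof.

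Unwinding $(A \star B) \star C = \Psi(\add_!(\Psi(\add_!(A \times B)) \times C))$ to reach the displayed identity rests on two compatibilities. First, for $Y \in K_0(\Var_S^{\upmu_n \times \upmu_n})$ and $C \in K_0(\Var_T^{\upmu_n})$, I need $\Psi(Y) \times C = \Psi_{12}(Y \times C)$ in $K_0(\Var_{S \times T}^{\upmu_n \times \upmu_n})$, since $\Psi_{12}$ touches only the first two $\upmu_n$-factors; the extra $\DGm$-coordinate on which the third $\upmu_n$ would act decouples, in the spirit of Example~\ref{ex:p-product-over-k-trivial-on-one-factor}. Second, $\Psi$ and $\Psi_{12}$ commute with pushforward by Remark~\ref{rem:Psi-compatible-with-pushforward-pullback}, and the base-change identity $\add_!(A \times B) \times C = (\add \times \id)_!(A \times B \times C)$ together with $\add \circ (\add \times \id) = \add^{(3)}$ gives $\add_! \circ (\add \times \id)_! = \add^{(3)}_!$ by functoriality. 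Chaining these observations will yield the required expression, and the analogous computation with $\Psi_{23}$ will handle $A \star (B \star C)$. The main obstacle is verifying the first compatibility $\Psi(Y) \times C = \Psi_{12}(Y \times C)$; once this is in hand, the rest is formal bookkeeping as in the proof of Proposition~\ref{p:convolution-comm-ass-unit}.
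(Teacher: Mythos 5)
Your proposal is correct and follows essentially the same route as the paper's proof: identity via Remark~\ref{rem:action-on-B-trivial-over-A1}, commutativity from symmetry, and associativity by reducing both triple products to $\Psi_{123}\bigl(\op{addd}_!(A \times B \times C)\bigr)$ via Proposition~\ref{p:Psi-associative}, Remark~\ref{rem:Psi-compatible-with-pushforward-pullback}, and the external-product/pushforward compatibilities. The compatibility $\Psi(Y) \times C = \Psi_{12}(Y \times C)$ you flag as the ``main obstacle'' is indeed used (implicitly) in the paper too; it is immediate from the definition of $\Psi_{12}$, since the third $\upmu_n$-factor plays no role in the balanced product there (so no extra $\DGm$-coordinate actually appears), and $C$ just passes through untouched.
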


\begin{proof}
  Commutativity follows from commutativity of $\DA^1_\kk$.
  That 
  $[\Spec k \xra{0} \DA^1_\kk]$ is the identity element with respect
  to $\star$ follows from
  Remark~\ref{rem:action-on-B-trivial-over-A1}.
  Denote the morphism $(\DA^1_\kk)^{\times 3} \ra \DA^1_\kk$,
  $(x,y,z) 
  \mapsto x+y+z$ by $\op{addd}$.
  Remark~\ref{rem:Psi-compatible-with-pushforward-pullback} and
  Proposition~\ref{p:Psi-associative}
  show that
  \begin{align*}
    (A \star B) \star C 
    & = 
    \add_!(\Psi(\add_!(\Psi(A \times B)) \times C))\\
    & = 
    \add_!((\add \times \id)_!(\Psi(\Psi(A \times B) \times C)))\\
    & = \op{addd}_!
      (\Psi(\Psi_{12}(A \times B \times C)))\\
    & = 
    \op{addd}_!(\Psi_{123}(A \times B \times C))
  \end{align*}
  A similar computation shows that the last term equals $A
  \star 
  (B \star C)$. This proves associativity.
\end{proof}

Mapping a $\kk$-variety $(A \ra \Spec \kk)$ to $(A \xra{0}
\DA^1_\kk)$ induces a morphism of unital rings
\begin{equation}
  \label{eq:46}
  K_0(\Var_\kk) \ra (K_0(\Var_{\DA^1_\kk}^{\upmu_n}), \star)
\end{equation}
as follows immediately from
Remark~\ref{rem:action-on-B-trivial-over-A1}. 
This map is the structure map of the  $K_0(\Var_\kk)$-algebra
$(K_0(\Var_{\DA^1_\kk}^{\upmu_n}), \star)$.
Denote the image of $\DL_\kk$ under this map by
\begin{equation}
  \label{eq:def-L-A1-0}
  \DL_{(\DA^1_\kk,0)}:=[\DA^1_\kk \xra{0} \DA^1_\kk] \in
  K_0(\Var_{\DA^1_\kk}^{\upmu_n}). 
\end{equation}

Let us denote the localization of 
$(K_0(\Var_{\DA^1_\kk}^{\upmu_n}), \star)$ with respect to the
multiplicative set $\{1, \DL_{(\DA^1_\kk,0)}, \DL_{(\DA^1_\kk,0)}
\star \DL_{(\DA^1_\kk,0)}, \dots\}$ by
$(\tilde{\mathcal{M}}_{\DA^1_\kk}^{\upmu_n}, \star)$. Then there is a canonical isomorphism
\begin{equation*}
  \mathcal{M}_\kk \otimes_{K_0(\Var_\kk)}
  (K_0(\Var_{\DA^1_\kk}^{\upmu_n}), \star) 
  \sira
  (\tilde{\mathcal{M}}_{\DA^1_\kk}^{\upmu_n}, \star)
\end{equation*}
of $\mathcal{M}_\kk$-algebras given by $\tfrac{r}{(\DL_\kk)^n}
\otimes a \mapsto \tfrac{ra}{(\DL_{(\DA^1_\kk,0)})^n}$. 
If we compare with 
the isomorphism~\eqref{eq:15} we see that
$\tfrac{b}{(\DL_{\DA^1_\kk})^n} \mapsto
\tfrac{b}{(\DL_{(\DA^1_\kk,0)})^n}$
defines an isomorphism 
\begin{equation}
  \label{eq:71}
  \mathcal{M}_{\DA^1_\kk}^{\upmu_n}
  \sira
  \tilde{\mathcal{M}}_{\DA^1_\kk}^{\upmu_n}
\end{equation}
of $\mathcal{M}_\kk$-modules.

Similarly as above (cf.\ the reasoning around \eqref{eq:22}),
the various $K_0(\Var_\kk)$-algebras
$(K_0(\Var_{\DA^1_\kk}^{\upmu_n}), \star)$ for $n \geq 1$ are
compatible. Hence we obtain the 
$K_0(\Var_\kk)$-algebras
$(K_0(\Var_{\DA^1_\kk}^{\hat{\upmu}}), \star)$
and
\begin{equation*}
  \mathcal{M}_\kk \otimes_{K_0(\Var_\kk)}
  (K_0(\Var_{\DA^1_\kk}^{\hat{\upmu}}), \star) 
  \sira
  (\tilde{\mathcal{M}}_{\DA^1_\kk}^{\hat{\upmu}}, \star)
\end{equation*}
and an isomorphism
\begin{equation}
  \label{eq:62}
  \mathcal{M}_{\DA^1_\kk}^{\hat{\upmu}}
  \sira
  \tilde{\mathcal{M}}_{\DA^1_\kk}^{\hat{\upmu}}
\end{equation}
of $\mathcal{M}_\kk$-modules.

\begin{lemma}
  \label{l:convolution-products-compatible}
  Let $\epsilon \colon  \DA^1_\kk \ra \Spec \kk$ be the structure
  morphism. 
  Then mapping an object $(A \xra{\alpha} \DA^1_\kk) \in
  \Var_{\DA^1_\kk}^{\upmu_n}$ to $(A \xra{\epsilon \alpha} \Spec \kk)
  \in \Var^{\upmu_n}_\kk$ induces a morphism
  \begin{equation}
    \label{epsilon!-star-*}
    \epsilon_! \colon  (\tilde{\mathcal{M}}_{\DA^1_\kk}^{\hat{\upmu}}, \star)
    \ra (\mathcal{M}_k^{\hat{\upmu}}, *)
  \end{equation}
  of $\mathcal{M}_\kk$-algebras.
\end{lemma}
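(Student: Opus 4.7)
The strategy is to verify directly that the pushforward $\epsilon_!$, which is already a morphism of $\mathcal{M}_\kk$-modules by the discussion in Section~\ref{sec:groth-rings-equiv}, intertwines the algebra structures $\star$ on the source and $*$ on the target, and then to check that it descends to the localization $\tilde{\mathcal{M}}_{\DA^1_\kk}^{\hat{\upmu}}$.

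The main point is multiplicativity, and the key input is Remark~\ref{rem:Psi-compatible-with-pushforward-pullback}, which tells us that $\Psi$ commutes with pushforward along any morphism of $\kk$-varieties. Given two objects $(A \xra{\alpha} \DA^1_\kk)$ and $(B \xra{\beta} \DA^1_\kk)$ of $\Var^{\upmu_n}_{\DA^1_\kk}$, the plan is to compute
\begin{align*}
\epsilon_!([A] \star [B])
&= \epsilon_! \Psi(\add_!(A \times B)) \\
&= \Psi((\epsilon \circ \add)_!(A \times B)) \\
&= \Psi(A \times B \ra \Spec \kk) \\
&= \epsilon_!([A]) * \epsilon_!([B]),
\end{align*}
where the second equality uses the $\Psi$-pushforward compatibility together with functoriality of pushforward, and the last equality uses the definition of $*$ in the case $S = \Spec \kk$ together with the observation that the morphism $\epsilon \circ \add \colon \DA^1_\kk \times \DA^1_\kk \ra \Spec \kk$ agrees with the structure morphism, so that $(\epsilon \circ \add)_!(A \times B)$ is nothing but $\epsilon_!(A) \times_\kk \epsilon_!(B)$ as an object of $\Var^{\upmu_n \times \upmu_n}_\kk$.

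For the unit, the identity of $(K_0(\Var_{\DA^1_\kk}^{\hat{\upmu}}), \star)$ is $[\Spec \kk \xra{0} \DA^1_\kk]$ by Proposition~\ref{p:A1-convolution-comm-ass-unit}, and its $\epsilon_!$-image is $[\Spec \kk \xra{\id} \Spec \kk]$, which is the identity of $(\mathcal{M}_\kk^{\hat{\upmu}}, *)$ by Proposition~\ref{p:convolution-comm-ass-unit}. To descend to the localization, note that $\epsilon_!(\DL_{(\DA^1_\kk, 0)}) = \DL_\kk$, which is $*$-invertible in $(\mathcal{M}_\kk^{\hat{\upmu}}, *)$ by construction, so the universal property of localization yields the desired morphism \eqref{epsilon!-star-*}. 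Passage from $\upmu_n$ to the $\hat{\upmu}$-case is handled by taking colimits and using that all the constructions above are compatible with the transition maps $\upmu_{n'} \ra \upmu_n$.

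The only step requiring genuine input is the application of Remark~\ref{rem:Psi-compatible-with-pushforward-pullback}; everything else is bookkeeping, in particular the verification that the diagonal $\hat{\upmu}$-actions are transported correctly under the morphisms involved, which is immediate by functoriality.
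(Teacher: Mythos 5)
Your proof is correct and follows essentially the same route as the paper: you use Remark~\ref{rem:Psi-compatible-with-pushforward-pullback} together with functoriality of $\cdot_!$ to establish multiplicativity, check the unit explicitly, pass to $\hat{\upmu}$ by colimits, and descend to the localizations via $\epsilon_!(\DL_{(\DA^1_\kk,0)}) = \DL_\kk$. The only cosmetic difference is that you apply $\epsilon_!$ to the representation $A \star B = \Psi(\add_!(A \times B))$ directly, whereas the paper first rewrites $A \star B$ as $\add_!(\Psi(A \times B))$ before applying $\epsilon_!$; both computations are immediate consequences of the same two facts.
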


\begin{proof}
  Certainly we have a morphism
  \begin{equation}
    \label{eq:33}
    \epsilon_! \colon  K_0(\Var_{\DA^1_\kk}^{\upmu_n})
    \ra K_0(\Var_k^{\upmu_n})  
  \end{equation}
  of $K_0(\Var_\kk)$-modules.
  If $Z$ is a $\kk$-variety we denote its structure morphism
  $Z \ra \Spec \kk$ by $\epsilon^Z$.
  Let $A, B \in  K_0(\Var_{\DA^1_\kk}^{\upmu_n})$.
  Since $\epsilon_!(A)$ and $\epsilon_!(B)$ are in
  $K_0(\Var_k^{\upmu_n})$,  
  $\epsilon_!(A) * \epsilon_!(B)$ is defined using the fiber
  product over $\kk$.
  Using Remark~\ref{rem:Psi-compatible-with-pushforward-pullback}
  we obtain
  \begin{multline*}
     \epsilon_!(A \star B) 
     = \epsilon_!(\add_!(\Psi(A \times B)))
     = \epsilon^{\DA^1_\kk \times \DA^1_\kk}_!(\Psi(A \times B))
     = \Psi(\epsilon^{\DA^1_\kk \times \DA^1_\kk}_!(A \times B))\\
     = \Psi(\epsilon_!(A) \times \epsilon_!(B))
     = \epsilon_!(A) * \epsilon_!(B).
  \end{multline*}
  Clearly, \eqref{eq:33} maps $[\Spec \kk \xra{0} \DA^1_\kk]$ to
  $[\Spec \kk \ra \Spec \kk]$. Therefore it is a morphism of
  $K_0(\Var_\kk)$-algebras
  $\epsilon_! \colon  (K_0(\Var_{\DA^1_\kk}^{\upmu_n}), \star)
  \ra (K_0(\Var_k^{\upmu_n}),*)$.  
  We can pass to $\hat{\upmu}$. Then
  base change along $K_0(\Var_\kk) \ra
  \mathcal{M}_\kk$ (or noting that
  $\DL_{(\DA^1_\kk,0)}$ goes to $\DL_{\Spec \kk}$) yields a
  morphism
  $\epsilon_! \colon (\tilde{\mathcal{M}}_{\DA^1_\kk}^{\upmu_n},
  \star) \ra (\mathcal{M}_k^{\upmu_n}, *)$
  of $\mathcal{M}_\kk$-algebras.  The lemma follows.
\end{proof}

\section{Motivic vanishing cycles}
\label{sec:motiv-vanish-fiber}

Let $X$ be a smooth connected (nonempty) $\kk$-variety and let 
$V \colon  X \ra \DA^1_\kk$ be a morphism. 
Given $a \in \kk = \DA^1_\kk(\kk)$ we
denote by $X_a$ the scheme theoretic fiber of $V$ over $a$.

We quickly review the definition of the motivic vanishing
cycles. For details we refer to  
\cite[Sect.~3]{guibert-loeser-merle-convolution};
note however that we use slightly different signs, see Remark 
\ref{rem:motivic-fibers-signs} below. 
Following Denef and Loeser, the \define{motivic zeta function of
  $V$ at $a$} is a certain power series 
\begin{equation*}
  Z_{V,a}(T) \in \mathcal{M}^{\hat{\upmu}}_{|X_a|}[[T]]
\end{equation*}
whose coefficients are defined using arc spaces, see
\cite[(3.2.2)]{guibert-loeser-merle-convolution}.
It is possible to evaluate $Z_{V,a}$ at $T=\infty$. 
This is clear if $V$ is constant because then $Z_{V,a}=0$. 
If $V$ is not 
equal to $a$
there is a formula expressing $Z_{V,a}$ in terms of an embedded
resolution of $|X_a| \subset X$
which makes it evident that the
evaluation at $T=\infty$ exists.
 
The \define{motivic nearby fiber
  $\psi_{V,a}$ of $V$ at $a$} is defined to be
the negative of this value at 
infinity, i.\,e.\
\begin{equation*}
  \psi_{V,a} := -Z_{V,a}(\infty) \in
  \mathcal{M}^{\hat{\upmu}}_{|X_a|}. 
\end{equation*}
See \eqref{eq:formula-mot-nearby-fiber} below for a formula for
$\psi_{V,a}$ in terms of an embedded resolution.
The \define{motivic vanishing cycles of $V$ at $a$} are defined by
\begin{equation}
  \label{eq:defi-mot-van-fiber}
  \phi_{V,a}:= [|X_a| \xra{\id} |X_a|] 
  -\psi_{V,a}
  \in
  \mathcal{M}^{\hat{\upmu}}_{|X_a|}.  
\end{equation}
Here $|X_a|$ is endowed with the trivial
$\hat{\upmu}$-action.

\begin{remark}
  \label{rem:constant-potential}
  If $V$ is constant we have $\psi_{V,a}=0$. If $V$ is
  constant 
  $\not=a$ we have $\phi_{V,a}=0$. If $V$ is constant $=a$ we
  have $X=|X_a|$ and $\phi_{V,a}=[X \xra{\id} X]$.
\end{remark}

\begin{remark}
  \label{rem:motivic-fibers-signs} 
  Denef and Loeser choose different signs in the definition of
  the motivic vanishing cycles. In 
  \cite{guibert-loeser-merle-convolution}, the motivic nearby
  fiber (resp.\ motivic vanishing cycles) 
  of $V$ at $0$ is denoted $\mathcal{S}_V$ (resp.\
  $\mathcal{S}_V^\phi$). They are related to our definitions by
  \begin{align*}
    \psi_{V,a} & = \mathcal{S}_{V-a},\\
    \phi_{V,a} & = (-1)^{\dim X} \mathcal{S}_{V-a}^\phi.
  \end{align*}
  Our sign choice for the motivic vanishing cycles is justified in 
  Remark~\ref{rem:justify-sign-vanishing}.
\end{remark}

Let $\Sing(V) \subset X$ be the closed subscheme defined by the 
vanishing of the section $dV \in \Gamma(X, \Omega^1_{X/\kk})$
of the cotangent bundle.
The closed points of $\Sing(V)$ are the critical
points of $V$. Let $\Crit(V) =V(\Sing(V)(\kk)) \subset
\DA^1(\kk)=\kk$ 
be the set of critical values of $W$; it is finite by generic
smoothness on the target. Trivially we have $\Sing(V) \cap
X_a=\emptyset$ if $a$ is not a critical value.

If $Z$ is a scheme locally of finite type over $\kk$ we denote
its open
subscheme
consisting of
regular points by $Z^\reg$. The closed subset $Z^\sing \subset Z$ of
singular points has a unique structure of a reduced closed
subscheme of $Z,$ denoted by $|Z^\sing|$.

\begin{remark}
  \label{rem:SingV-cap-Xa-versus-Xa-sing}
  If $V=a$ then $\Sing(V) \cap X_a=X$ and
  $(X_a)^\sing=\emptyset$. 
  Otherwise
  the singular
  points of $X_a$ are precisely the elements of the
  scheme-theoretic intersection $\Sing(V) \cap X_a,$ i.\,e.\ we
  have the equality
  \begin{equation}
    \label{eq:reduced-SingV-cap-Xa-equals-reduced-Xa-sing}
    |\Sing(V) \cap X_a| = |(X_a)^\sing|  
  \end{equation}
  of $\kk$-varieties. This is
  trivial if $V$ is constant $\not=a,$ and otherwise it follows
  by considering Jacobian matrices.
\end{remark}

Let us prove that the motivic vanishing cycles $\phi_{V,a}$ live 
over $|\Sing(V) \cap X_a|$.

\begin{proposition}
  \label{p:motivic-vanishing-cycles-over-singular-part}
  We have $\phi_{V,a} \in
  \mathcal{M}^{\hat{\upmu}}_{|\Sing(V) \cap X_a|}$ canonically.
\end{proposition}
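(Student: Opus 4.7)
My plan is to apply Lemma~\ref{l:open-closed-decomposition} to the decomposition of $|X_a|$ into the closed subscheme $|\Sing(V) \cap X_a|$ and its open complement $U := |X_a| \setminus |\Sing(V) \cap X_a|$. Thus it suffices to show that the restriction of $\phi_{V,a}$ to $U$ vanishes in $\mathcal{M}^{\hat{\upmu}}_U$. The constant cases are immediate from Remark~\ref{rem:constant-potential} and Remark~\ref{rem:SingV-cap-Xa-versus-Xa-sing}: if $V \equiv a$ then $|\Sing(V) \cap X_a| = X = |X_a|$ and $U = \emptyset$, while if $V$ is constant with value $\neq a$ then $X_a = \emptyset$. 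So I may assume $V$ is non-constant, in which case Remark~\ref{rem:SingV-cap-Xa-versus-Xa-sing} yields $U = (X_a)^\reg$. Since $[|X_a| \xra{\id} |X_a|]$ carries the trivial $\hat{\upmu}$-action and restricts to $[U \xra{\id} U]$, the definition \eqref{eq:defi-mot-van-fiber} reduces the claim to the equality
$$\psi_{V,a}|_U = [U \xra{\id} U]$$
in $\mathcal{M}^{\hat{\upmu}}_U$, with trivial $\hat{\upmu}$-action on the right hand side.

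To establish this I would introduce the smooth open subvariety $X^\circ := X \setminus |\Sing(V)|$ of $X$ together with the restriction $V^\circ := V|_{X^\circ} \colon X^\circ \to \DA^1_\kk$. Since $dV$ is nowhere vanishing on $X^\circ$, the morphism $V^\circ$ is \emph{smooth}, and by Remark~\ref{rem:SingV-cap-Xa-versus-Xa-sing} the reduced fiber of $V^\circ$ over $a$ equals $U$. I would then invoke two compatibilities of the motivic nearby fiber construction: (a) compatibility with open immersions in the source, yielding $\psi_{V,a}|_U = \psi_{V^\circ, a}$; and (b) the fact that for a smooth morphism the motivic nearby fiber is simply the reduced fiber endowed with the trivial $\hat{\upmu}$-action, yielding $\psi_{V^\circ, a} = [U \xra{\id} U]$. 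Composing (a) and (b) closes the argument.

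The main obstacle is (a). It is ultimately a consequence of the fact that truncated arc spaces localize to open subschemes, so each coefficient of the motivic zeta function $Z_{V,a}(T)$ of \cite[(3.2.2)]{guibert-loeser-merle-convolution} restricts along $U \hra |X_a|$ to the corresponding coefficient of $Z_{V^\circ, a}(T)$, and hence so does the value at $T = \infty$ that defines $\psi$. Claim (b) is then a direct check from the embedded-resolution formula for $\psi_{V^\circ, a}$ applied to the trivial embedded resolution $\id \colon X^\circ \to X^\circ$ of the smooth divisor $U \subset X^\circ$: the formula collapses to a single stratum with multiplicity one and trivial cyclic cover, giving exactly $[U \xra{\id} U]$ with trivial $\hat{\upmu}$-action. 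Once (a) is in place, (b) is routine bookkeeping.
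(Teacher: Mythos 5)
Your proof reaches the same conclusion by a genuinely different route. The paper's proof fixes a single embedded resolution $h\colon R\to X$ of $|X_a|$ and analyzes the formula \eqref{eq:formula-mot-nearby-fiber} directly: the key geometric input is that $h$ restricts to an isomorphism over $X-(X_a)^\sing$ (argued via the \'etale-functoriality of principalization, resp.\ Villamayor's constructive desingularization), which forces the strata $E_I$ with $|I|\geq 2$ to land in the singular locus and makes the surviving multiplicities equal to $1$. You instead restrict first to $X^\circ=X\setminus|\Sing(V)|$, where $V$ becomes smooth, and then observe that the nearby fiber of a smooth morphism is the reduced fiber with trivial action. This isolates the smoothness of $V^\circ$ as the conceptual reason $\phi_{V,a}$ vanishes over $(X_a)^\reg$, and is arguably more modular --- \emph{provided} the localization step is actually grounded.

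That localization, your step (a), is where the proposal is too thin. You assert that the coefficients of $Z_{V,a}(T)$ restrict along $U\hookrightarrow|X_a|$ to those of $Z_{V^\circ,a}(T)$, but this is neither established in the paper nor a trivial unwinding of the arc-space definition: one must check that the contact loci, their $\upmu_n$-actions, and the structure maps to $|X_a|$ all restrict compatibly. A cleaner way to secure (a) staying within the paper's toolkit is to prove it from the formula \eqref{eq:formula-mot-nearby-fiber} itself: an embedded resolution $h$ of $|X_a|\subset X$ restricts to an embedded resolution $h^\circ:=h|_{h^{-1}(X^\circ)}$ of $|X^\circ_a|\subset X^\circ$, with compatible strata $E_I$, multiplicities $m_I$, and Galois covers $\tildew{E}^\circ_I$, so the pullback of the formula for $(V,h)$ along $U\hookrightarrow|X_a|$ matches the formula for $(V^\circ,h^\circ)$ term by term. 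You should also note that $X^\circ$ may be disconnected, outside the paper's standing hypothesis on $X$; this is harmless (everything localizes over connected components), but should be said. Once (a) is in place, your step (b) is exactly the computation carried out in the paper's proof of Corollary~\ref{c:mot-van-fiber-zero-if-Xa-smooth} with $h=\id$.
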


Therefore we will often view the motivic vanishing cycles
$\phi_{V,a}$ as an 
element of $\mathcal{M}^{\hat{\upmu}}_{|(\Sing(V) \cap X_a|}$ in
the following. 

\begin{proof}
  If $V$ is constant this follows directly from
  Remarks~\ref{rem:constant-potential}
  and \ref{rem:SingV-cap-Xa-versus-Xa-sing}
 
  So let us assume that $V$ is not constant.  
  As in \cite[3.3]{denef-loeser-arc-spaces}
  let
  $h \colon R \ra X$ be an embedded resolution of $|X_a|$
  so that the ideal sheaf of $h^{-1}(|X_a|)$ is the ideal sheaf of
  a simple normal crossing divisor
  (cf.\ \cite[Thm.~3.26]{kollar-singularities} or
  \cite[Thm.~2.2]{villamayor-on-constructive-desingularization} 
  for existence).
  Let $E=h^{-1}(X_a)$ be the divisor
  on $R$ defined by $V \circ h$.
  Let
  $(E_i)_{i \in \Irr(|E|)}$ 
  denote the irreducible components of $|E|$.
  Then $E=\sum_{i \in \Irr(|E|)} m_i E_i$ for unique
  $m_i \in \DN_+$. Let $I \subset \Irr(|E|)$ be given.  Define
  $E_I:= \cap_{i \in I} E_i$ and
  $E_I^\circ:= E_I \setminus \bigcup_{j \in \Irr(|E|) \setminus I}
  E_j$.
  Let $m_I$ be the greatest common divisor of the $m_i$ for
  $i \in I$. Then Denef and Loeser define an unramified Galois
  cover $\tildew{E}^\circ_I \ra E^\circ_I$ with Galois group
  $\upmu_{m_I}$. They establish the formula
  \begin{equation}
    \label{eq:formula-mot-nearby-fiber}
    \psi_{V,a}= \mathcal{S}_{V-a} = \sum_{\emptyset \not=I \subset \Irr(|E|)}
    (1-\DL)^{|I|-1}[\tildew{E}^\circ_I \ra |X_a|],
  \end{equation}
  see \cite[Sect. 3.3 and Def.~3.5.3]{denef-loeser-arc-spaces}.

  Note that $h$ induces an
  isomorphism $h^{-1}(U) \sira U$ where
  $U:= X - |X_a|^\sing$, by 
  part (ii) of
  \cite[Thm.~2.2]{villamayor-on-constructive-desingularization}.
  We can also deduce this from principalization 
  \cite[Thm.~3.26]{kollar-singularities} as follows.
  Since $|X_a|$ has codimension one and $h$ is a
  composition of blow-ups in smooth centers of codimension two
  and higher,  
  $h$ is an isomorphism over an open neighborhood of some regular
  point 
  of $|X_a|$ if $|X_a|$ is non-empty. 
  Since principalization 
  is 
  functorial with respect to \'etale morphisms, $h$ is an
  isomorphism over all regular points of $|X_a|$. 

  We obviously have open embeddings $(X_a)^\reg = |(X_a)^\reg| \subset
  |X_a|^\reg \subset |X_a|$ and hence a closed embedding
  $||X_a|^\sing| \subset |(X_a)^\sing|$.  
  Let $U':= X - (X_a)^\sing \subset U$, so $h^{-1}(U') \sira U'$
  is an isomorphism. Over $X_a \cap U' =(X_a)^\reg$ we obtain the
  isomorphism 
  \begin{equation}
    \label{eq:36}
    h \colon E \cap h^{-1}(U') \sira (X_a)^\reg,
  \end{equation}
  so $E \cap h^{-1}(U')$ is regular.

  If $|I|\geq 2$, then every element $e \in E_I$ lies in $|E|^\sing
  \subset E^\sing$, so $e \not\in E \cap h^{-1}(U')$ and hence
  $h(e) \in (X_a)^\sing$. Therefore  
  $\tildew{E}^\circ_I \ra |X_a|$ factors as 
  $\tildew{E}^\circ_I \ra |(X_a)^\sing| \subset |X_a|$.

  If $r \colon (X_a)^\reg = |(X_a)^\reg| \ra |X_a|$ is the
  inclusion we hence obtain
  \begin{equation*}
    r^*(\psi_{V,a}) = \sum_{i \in \Irr(|E|)}
    [\tildew{E}^\circ_i|_{E_i^\circ \cap h^{-1}(U')} \ra (X_a)^\reg].
  \end{equation*}
  If $E_i^\circ \cap h^{-1}(U')$ is nonempty then $m_i=1$ because
  $E \cap h^{-1}(U')$ is reduced, so $\tildew{E}^\circ_i \ra
  E^\circ_i$ is an isomorphism. Moreover, $E \cap h^{-1}(U')$ is
  the disjoint union of the 
  $E_i^\circ \cap h^{-1}(U')$, for $i \in \Irr(|E|)$.
  These facts and the isomorphism \eqref{eq:36} imply that
  \begin{equation*}
    r^*(\psi_{V,a}) 
    = 
    [(X_a)^\reg \xra{\id} (X_a)^\reg].
  \end{equation*}
  Hence $r^*(\phi_{V,a})=0$ by definition
  \eqref{eq:defi-mot-van-fiber}.
  The decomposition
  $(X_a)^\reg \subset X_a \supset |(X_a)^\sing|$ of $X_a$ into an
  open and a closed reduced subscheme gives rise to a similar
  decomposition
  $(X_a)^\reg=|(X_a)^\reg| \subset |X_a| \supset |(X_a)^\sing|$
  of $|X_a|$. 
  Hence Lemma~\ref{l:open-closed-decomposition} shows that
  $\phi_{V,a} \in \mathcal{M}_{|(X_a)^\sing|}^{\hat{\upmu}}$.
  Since $V$ is not constant, 
  \eqref{eq:reduced-SingV-cap-Xa-equals-reduced-Xa-sing} holds
  true. 
\end{proof}

\begin{corollary}
  \label{c:mot-van-fiber-zero-if-Xa-smooth}
  If $V$ is not constant and $X_a$ is smooth
  then $\phi_{V,a}=0$. 
\end{corollary}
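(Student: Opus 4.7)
The proof should be essentially a one-line consequence of Proposition~\ref{p:motivic-vanishing-cycles-over-singular-part} together with Remark~\ref{rem:SingV-cap-Xa-versus-Xa-sing}. The plan is to observe that the smoothness hypothesis on $X_a$ forces the support of $\phi_{V,a}$ to be empty.

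More precisely, I would proceed as follows. First, since $X_a$ is smooth, we have $(X_a)^{\sing} = \emptyset$, and hence $|(X_a)^{\sing}|$ is the empty $\kk$-variety. Second, because $V$ is not constant, Remark~\ref{rem:SingV-cap-Xa-versus-Xa-sing} (specifically the equality~\eqref{eq:reduced-SingV-cap-Xa-equals-reduced-Xa-sing}) yields
\begin{equation*}
  |\Sing(V) \cap X_a| = |(X_a)^{\sing}| = \emptyset.
\end{equation*}
Third, by Proposition~\ref{p:motivic-vanishing-cycles-over-singular-part} we can view $\phi_{V,a}$ canonically as an element of $\mathcal{M}^{\hat{\upmu}}_{|\Sing(V) \cap X_a|} = \mathcal{M}^{\hat{\upmu}}_{\emptyset}$, and the latter ring is zero. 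Hence $\phi_{V,a} = 0$.

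There is no real obstacle here; the content is entirely packaged into the earlier results. The only (mild) subtlety is that one must invoke the hypothesis ``$V$ is not constant'' in order to apply Remark~\ref{rem:SingV-cap-Xa-versus-Xa-sing}: without it, $\Sing(V) \cap X_a$ could equal all of $X$ even while $(X_a)^{\sing}$ is empty, so the conclusion would genuinely fail (cf.\ Remark~\ref{rem:constant-potential}, where $\phi_{V,a} = [X \xra{\id} X]$ when $V$ is the constant $a$).
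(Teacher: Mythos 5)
Your proof is correct and matches the paper's own first argument exactly: smoothness of $X_a$ gives $|\Sing(V)\cap X_a| = |(X_a)^{\sing}| = \emptyset$ (using $V$ nonconstant), and Proposition~\ref{p:motivic-vanishing-cycles-over-singular-part} places $\phi_{V,a}$ in the zero ring $\mathcal{M}^{\hat{\upmu}}_{\emptyset}$. The paper also offers a second, more direct computation (taking $h = \id$ as the embedded resolution and reading off $\psi_{V,a} = [|X_a| \xra{\id} |X_a|]$ from \eqref{eq:formula-mot-nearby-fiber}), but that alternative is not needed and your route is the cleaner consequence of the preceding proposition.
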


\begin{proof}
  In this case we have 
  $|\Sing(V) \cap X_a| = |(X_a)^\sing|= \emptyset$
  by \eqref{eq:reduced-SingV-cap-Xa-equals-reduced-Xa-sing}.
  More directly, we can take $h=\id$ as an embedded resolution
  in the above proof and obtain 
  $\psi_{V,a}= [|X_a| \xra{\id} |X_a|]$ from
  \eqref{eq:formula-mot-nearby-fiber} and
  hence $\phi_{V,a}=0$. 
\end{proof}

\begin{corollary}
  \label{c:mot-van-fiber-compactification}
  Assume that $X$ is a dense open subset of a smooth
  $\kk$-variety $X'$ and that $V \colon X \ra \DA^1_\kk$ extends
  to a morphism $V' \colon X' \ra \DA^1_\kk$ such that all
  critical points of $V'$ are contained in $X$, i.\,e.\
  $\Sing(V')=\Sing(V') \cap X=\Sing(V)$. 
  Then $\phi_{V,a}=\phi_{V',a}$.
\end{corollary}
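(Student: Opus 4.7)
The plan is to first dispose of the constant case and then to compare embedded resolutions of $|X_a| \subset X$ and $|X'_a| \subset X'$ via formula \eqref{eq:formula-mot-nearby-fiber}. If $V$ is constant, then $V'$ (continuous and agreeing with $V$ on the dense open $X$) is constant as well, so $\Sing(V') = X'$; the hypothesis $\Sing(V') \subset X$ then forces $X = X'$ and $V = V'$, making the equality trivial. I now assume $V$ (and hence $V'$) is non-constant.

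Since $\Sing(V') \subset X$, the morphism $V'$ is smooth on $X' \setminus X$, so $X'_a \cap (X' \setminus X)$ is smooth and $|X'_a|^\sing \subset X$. I would choose an embedded resolution $h' \colon R' \to X'$ of $|X'_a|$ as in the proof of Proposition~\ref{p:motivic-vanishing-cycles-over-singular-part}; by that proof, $h'$ is an isomorphism over the regular locus of $|X'_a|$, and in particular over $X' \setminus X$. Setting $R := h'^{-1}(X)$ and $h := h'|_R$, the morphism $h \colon R \to X$ is proper (as the base change of $h'$ along the open embedding $X \hookrightarrow X'$) and is an embedded resolution of $|X_a| = |X'_a| \cap X$ in $X$. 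Write $D := h'^{-1}(X'_a) = \sum_i m_i D_i$ and $E := h^{-1}(X_a) = D \cap R$; restriction to $R$ identifies $\Irr(|E|)$ with $\mathcal{A} := \{i \in \Irr(|D|) \mid D_i \cap R \neq \emptyset\}$. The crucial local observation is that at any point $p \in D_i \cap (R' \setminus R)$, the local equation of $D$ coincides via $h'$ with that of the smooth divisor $X'_a \cap (X' \setminus X)$ at $h'(p)$; this forces $m_i = 1$ and $D_i$ to be the only component of $D$ through $p$, so distinct components do not meet anywhere in $R' \setminus R$.

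Let $\iota \colon |X_a| \hookrightarrow |X'_a|$ be the open inclusion. A term-by-term comparison of formula \eqref{eq:formula-mot-nearby-fiber} applied to $h'$ and $h$ then shows that in the difference $\psi_{V',a} - \iota_! \psi_{V,a}$ all contributions from $|I| \geq 2$ vanish, and only singletons $I = \{i\}$ with $D_i$ meeting $R' \setminus R$ survive; each such term equals $[D_i \cap (X' \setminus X) \to |X'_a|]$ with trivial $\hat{\upmu}$-action (as $m_i = 1$), and their sum is $[|X'_a| \setminus |X_a| \to |X'_a|]$, since the smooth scheme $X'_a \cap (X' \setminus X)$ is the disjoint union of its connected components $D_i \cap (X' \setminus X)$. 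Combining with the scissor relation $[|X'_a|] = \iota_! [|X_a|] + [|X'_a| \setminus |X_a|]$ and definition \eqref{eq:defi-mot-van-fiber} yields $\phi_{V',a} = \iota_! \phi_{V,a}$ in $\mathcal{M}^{\hat{\upmu}}_{|X'_a|}$; since both sides lie in the canonical summand $\mathcal{M}^{\hat{\upmu}}_C$ for $C := |\Sing(V) \cap X_a| = |\Sing(V') \cap X'_a|$ by Proposition~\ref{p:motivic-vanishing-cycles-over-singular-part}, Lemma~\ref{l:open-closed-decomposition} gives the desired equality $\phi_{V,a} = \phi_{V',a}$.

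The step requiring the most care is the bookkeeping for the Denef--Loeser $\upmu_{m_I}$-covers $\tildew{D}^\circ_I \to D^\circ_I$ under restriction to $R$: one must verify that for $I \subset \mathcal{A}$ they restrict to the corresponding covers $\tildew{E}^\circ_J \to E^\circ_J$ with matching $\upmu_{m_I}$-action, and that in the singleton case over $R' \setminus R$ the cover (with $m_i = 1$) is trivial. Both follow from the purely local construction of these covers --- they depend only on the $m_i$ and on the local equations of the $D_i$, which agree in $R$ and in $R'$ --- but this is the one point in the argument that is not purely formal.
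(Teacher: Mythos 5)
Your proof is correct, and it follows the same overall strategy as the paper: dispose of the constant case directly, then take an embedded resolution $h' \colon R' \to X'$ of $|X'_a|$, restrict along the open embedding $X \hookrightarrow X'$ to get a compatible embedded resolution $h \colon R=h'^{-1}(X) \to X$ of $|X_a|$, and compare the Denef--Loeser formulas \eqref{eq:formula-mot-nearby-fiber} for the two nearby fibers. The only real difference is the order of operations in that comparison. The paper first restricts both $\psi_{V,a}$ and $\psi_{V',a}$ to the common singular locus $|(X_a)^\sing|=|(X'_a)^\sing|$, which by hypothesis sits inside $X$, so all contributions from over $X'\setminus X$ drop out automatically and one only needs compatibility of the Galois covers over that locus. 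You instead compare the full nearby fibers in $\mathcal{M}^{\hat{\upmu}}_{|X'_a|}$, verify explicitly that the terms with $|I|\geq 2$ cancel (every point of $R'\setminus R$ lies on a unique component, by your local observation that $D$ restricts there to a smooth multiplicity-one divisor), and identify the surviving singleton boundary terms with $[|X'_a|\setminus |X_a|]$ carrying trivial $\hat{\upmu}$-action; the scissor relation then gives $\phi_{V',a}=\iota_!\phi_{V,a}$, and restriction to the singular locus finishes. Both routes rest on the same essential input (restriction of the resolution and compatibility of the $\tilde{E}^\circ_I$), but the paper's ``restrict to the singular locus first'' trick sidesteps the boundary bookkeeping you carry out by hand; your version is more explicit at that cost. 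Your caveat at the end --- that one must verify the Denef--Loeser covers restrict compatibly --- is exactly the point the paper also appeals to (with a similar lack of detail), so you are not papering over anything the original proof does not.
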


\begin{proof}
  If $V$ is constant then $V'$ is constant and
  $X=\Sing(V)=\Sing(V')=X'$, so the claim is trivial.

  Assume that $V$ is not constant. Then we can assume that the 
  embedded resolution $h \colon R \ra X$ of $|X_a|$ from the
  proof of  
  Proposition~\ref{p:motivic-vanishing-cycles-over-singular-part}
  is the restriction to $X$ of a similar embedded resolution 
  $h' \colon R' \ra X'$ 
  of $|X'_a|$.
  Let $s \colon |(X_a)^\sing| \ra |X_a|$ 
  and $s' \colon |(X'_a)^\sing| \ra |X'_a|$ denote the closed
  embeddings. 
  Then $\phi_{V,a}=s_! s^*(\phi_{V,a})$ 
  by (the proof of)
  Proposition~\ref{p:motivic-vanishing-cycles-over-singular-part}
  and
  \begin{equation*}
    s^*\phi_{V,a} = 
    [|(X_a)^\sing| \ra |(X_a)^\sing|]
    -s^*(\psi_{V,a})
  \end{equation*}
  by definition 
  \eqref{eq:defi-mot-van-fiber}.
  Similarly, we have $\phi_{V',a}=s'_! s'^*(\phi_{V',a})$ and
  \begin{equation*}
    s'^*\phi_{V',a}= 
    [|(X'_a)^\sing| \ra |(X'_a)^\sing|]
    -s'^*(\psi_{V',a}).
  \end{equation*}
  By assumption and Remark~\ref{rem:SingV-cap-Xa-versus-Xa-sing}
  we have $|(X_a)^\sing|=|\Sing(V) \cap X_a|=|\Sing(V') \cap
  X'_a|=|(X'_a)^\sing|$. Therefore it is enough to show that
  $s^*(\psi_{V,a})=s'^*(\psi_{V',a})$. But both
  expressions have 
  explicit formulas obtained from
  equation \eqref{eq:formula-mot-nearby-fiber}; these expressions
  coincide since the Galois coverings 
  $\tildew{E}^\circ_I \ra E^\circ_I$ and 
  $\tildew{E}'^\circ_{I'} \ra E'^\circ_{I'}$ 
  constructed for 
  $h \colon R \ra X$ 
  and $h' \colon R' \ra X'$ are compatible and give rise to
  isomorphic varieties 
  with $\hat{\upmu}$-action
  over  
  $|(X_a)^\sing|=|(X'_a)^\sing|$.
\end{proof}

\section{Motivic Thom-Sebastiani theorem}
\label{sec:thom-sebastiani}

Let $V \colon  X \ra \DA^1_\kk$ be as in the previous
section~\ref{sec:motiv-vanish-fiber}.  
Let $Y$ be a smooth connected (nonempty) $\kk$-variety with a 
morphism $W \colon Y \ra \DA^1_\kk$. 
We define $V \circledast W$ to be the composition
\begin{equation*}
  V \circledast W \colon X \times Y \xra{V \times W} \DA^1_\kk \times
  \DA^1_\kk \xra{+} 
  \DA^1_\kk.  
\end{equation*}

\begin{theorem}
  [{Motivic Thom-Sebastiani Theorem,
  \cite[Thm.~5.18]{guibert-loeser-merle-convolution}}] 
  \label{t:thom-sebastiani-GLM-motivic-vanishing-cycles}
  Consider morphisms $V \colon X \ra \DA^1_\kk$ and $W \colon Y \ra
  \DA^1_\kk$ 
  as above and let $a, b \in \kk$. Let $i_{a,b}$ be the
  inclusion $|X_a| \times 
  |Y_b| \ra |(X \times Y)_{a+b}|$. 
  Then
  \begin{equation}
    \label{eq:thom-sebastiani-GLM-motivic-vanishing-cycles}
    i_{a,b}^*(\phi_{V \circledast W,a+b})
    =
    \Psi(\phi_{V,a} \times \phi_{W,b}) 
  \end{equation}
  in $\mathcal{M}^{\hat{\upmu}}_{|X_a| \times |Y_b|}$ where 
  $\phi_{V,a} \times \phi_{W,b}$ is the obvious element of
  $\mathcal{M}^{\hat{\upmu}
  \times \hat{\upmu}}_{|X_a| \times |Y_b|}$.
\end{theorem}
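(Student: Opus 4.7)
The plan is to deduce the theorem directly from the motivic Thom-Sebastiani theorem in its original form \cite[Thm.~5.18]{guibert-loeser-merle-convolution}, by first reducing to the case $a=b=0$ and then translating through the sign convention of Remark~\ref{rem:motivic-fibers-signs} and the dictionary of Remark~\ref{rem:dictionary}.

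First, I would reduce to $a=b=0$ by the substitution $V \leadsto V-a$ and $W \leadsto W-b$, which preserves smoothness and connectedness and satisfies $(V-a) \circledast (W-b) = V \circledast W - (a+b)$ as potentials on $X \times Y$. The fibers are unchanged as $\kk$-varieties: $|X_a|$ for $V$ coincides with $|X_0|$ for $V-a$, and similarly for the other two potentials. Inspecting the arc-space definition of $Z_{V,a}(T)$, or equivalently the embedded-resolution formula \eqref{eq:formula-mot-nearby-fiber} (since $h \colon R \ra X$ resolves $|X_a|$ for $V$ if and only if it resolves $|X_0|$ for $V-a$), one sees $\phi_{V,a}=\phi_{V-a,0}$, and analogously for $W$ and $V \circledast W$. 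Under this reduction $i_{a,b}$ becomes $i_{0,0}$ for the translated potentials, and both sides of \eqref{eq:thom-sebastiani-GLM-motivic-vanishing-cycles} transform correspondingly.

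Second, I would invoke \cite[Thm.~5.18]{guibert-loeser-merle-convolution}, which gives an identity of the form
\[
i_{0,0}^*\bigl(\mathcal{S}^\phi_{V \circledast W}\bigr) = \mathcal{S}^\phi_V \star \mathcal{S}^\phi_W
\]
in the GLM-equivariant Grothendieck group over $|X_0| \times |Y_0|$, where $\star$ denotes their convolution product. Via the dictionary of Remark~\ref{rem:dictionary}, their $\star$ is identified with our $\Psi \circ (\times)$: the pair product $\times$ landing in the $\hat{\upmu} \times \hat{\upmu}$-equivariant group, followed by the morphism $\Psi$ of \eqref{eq:10}. The concrete appearance of the Fermat-type loci $\{x^n+y^n=1\}$ and $\{x^n+y^n=0\}$ in \eqref{eq:psi-two-mu-s} is precisely what this dictionary matches against the GLM balanced-product construction.

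Third, I would insert the sign conversion from Remark~\ref{rem:motivic-fibers-signs}: $\phi_{V,a} = (-1)^{\dim X}\, \mathcal{S}^\phi_{V-a}$, and similarly for $W$ and for $V \circledast W$ on the variety $X \times Y$. Since $\dim(X \times Y) = \dim X + \dim Y$, the sign $(-1)^{\dim X + \dim Y}$ appearing on the left after converting $\phi_{V \circledast W,a+b}$ to $\mathcal{S}^\phi$ equals the product of the signs extracted from $\phi_{V,a}$ and $\phi_{W,b}$ on the right, and the $K_0(\Var_\kk)$-linearity of $\Psi$ lets the second sign factor pass through. The only real obstacle is bookkeeping, namely checking that the dictionary of \cite[2.3--2.6]{guibert-loeser-merle-convolution} identifies the GLM convolution with our $\Psi$ as described; this is a purely formal verification matching balanced products under $\upmu_n \times \upmu_n$ against Fermat pieces, but it must be done once to legitimize the translation.
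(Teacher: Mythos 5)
Your proposal matches the paper's strategy in the main case: both proofs rest on \cite[Thm.~5.18]{guibert-loeser-merle-convolution} together with the sign translation from Remark~\ref{rem:motivic-fibers-signs} and the dictionary of Remark~\ref{rem:dictionary}. (Your separate ``reduce to $a=b=0$'' step is essentially already absorbed into the sign formula $\phi_{V,a}=(-1)^{\dim X}\mathcal{S}^\phi_{V-a}$, so it doesn't do independent work, but it's harmless.) The sign bookkeeping using $\dim(X\times Y)=\dim X+\dim Y$ and the $K_0(\Var_\kk)$-linearity of $\Psi$ is correct and is exactly what makes the factors of $(-1)^{\dim}$ cancel.

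However, there is a gap: you do not address the degenerate situations where $V$ or $W$ is constant. The cited GLM theorem does not automatically cover these (the motivic zeta function $Z_{V,a}$ vanishes identically when $V$ is constant, so its value at $T=\infty$ is a separate convention, and GLM's hypotheses and resolution formalism implicitly assume a nontrivial fibration). The paper spends most of its actual proof on precisely these cases: when $V=a$, $W=b$ one checks both sides equal $[X\times Y\xra{\id}X\times Y]$ using Remark~\ref{rem:constant-potential} and Example~\ref{ex:p-product-over-k-trivial-on-one-factor}; when $V$ or $W$ is a different constant both sides vanish; and when only $W$ is constant $=b$ one has to compare explicit embedded-resolution formulas for $\phi_{V,a}$ and $\phi_{V\circledast W,a+b}$ on $X\times Y$. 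Without this verification, the reduction to \cite[Thm.~5.18]{guibert-loeser-merle-convolution} is not complete — and the constant cases genuinely matter later (e.g.\ for $\Phi$ on classes of the form $[Z\xra{0}\DA^1_\kk]$), so they cannot be quietly discarded.
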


\begin{proof}
  Using Remark~\ref{rem:motivic-fibers-signs}
  this is precisely
  \cite[Thm.~5.18]{guibert-loeser-merle-convolution}.  

  We would like to emphasize that
  \eqref{eq:thom-sebastiani-GLM-motivic-vanishing-cycles} also
  holds if $V$ or $W$ is constant. 
  Assume first that both $V$ and $W$ are constant; if $V=a$ and
  $W=b$ then both sides of
  \eqref{eq:thom-sebastiani-GLM-motivic-vanishing-cycles} are
  equal to $[X \times Y \xra{\id} X \times Y]$ (use
  Remark~\ref{rem:constant-potential} and 
  Example~\ref{ex:p-product-over-k-trivial-on-one-factor}); otherwise
  both sides are zero.

  Now assume that $V$ is not constant but $W$ is. If $W
  \not= b$ then again both sides of 
  \eqref{eq:thom-sebastiani-GLM-motivic-vanishing-cycles} are
  zero. If $W=b$ choose an embedded resolution of $|X_a| \subset
  X$ as in the
  proof of 
  Proposition~\ref{p:motivic-vanishing-cycles-over-singular-part}
  and obtain an explicit expression for $\phi_{V,a}$. If we take
  the product of 
  this embedded resolution with $Y=Y_b$ we obtain an embedded
  resolution of $|(X \times Y)_{a+b}|=|X_a| \times Y \subset X
  \times Y$ and an explicit expression for $\phi_{V \circledast W,a+b}$.
  Now use again Remark~\ref{rem:constant-potential} and 
  Example~\ref{ex:p-product-over-k-trivial-on-one-factor}.
\end{proof}

We want to globalize this theorem.
Since the set $\Crit(V) \subset \kk$ of
critical values of $V$ is finite, we have
\begin{equation}
  \label{eq:63}
  |\Sing(V)| = \coprod_{a \in \Crit(V)} |\Sing(V) \cap X_a|.
\end{equation}
Proposition~\ref{p:motivic-vanishing-cycles-over-singular-part}
shows that we can view 
$\phi_{V,a}$ as an element of
$\mathcal{M}^{\hat{\upmu}}_{|\Sing(V)|}$. Define
\begin{equation*}
  \tildew{\phi}_V:= \sum_{a \in \Crit(V)} \phi_{V,a}
  \in \mathcal{M}^{\hat{\upmu}}_{|\Sing(V)|}. 
\end{equation*}
Of course we could have equivalently taken the sum over all $a
\in \kk$,
by Corollary~\ref{c:mot-van-fiber-zero-if-Xa-smooth}
and Remark~\ref{rem:constant-potential}.

We obviously have
\begin{equation}
  \label{eq:Sing-V*W}
  \Sing(V \circledast W) =   \Sing(V) \times \Sing(W)
\end{equation}
and hence
$\Crit(W*V)=\Crit(W)+\Crit(V):=\{a+b\mid a \in
\Crit(W), b \in \Crit(V)\}$. 

\begin{corollary}
  [{Global motivic Thom-Sebastiani}]
  \label{c:global-thom-sebastiani}
  Let $V \colon X \ra \DA^1_\kk$ and $W \colon Y \ra \DA^1_\kk$
  be as above. Then
  \begin{equation}
    \label{eq:global-thom-sebastiani}
    \tildew{\phi}_{V \circledast W}
    =
    \Psi(\tildew{\phi}_V \times \tildew{\phi}_W) 
  \end{equation}
  in $\mathcal{M}^{\hat{\upmu}}_{|\Sing(V)| \times |\Sing(W)|}$ where 
  $\tildew{\phi}_V \times \tildew{\phi}_W$ is the obvious element of
  $\mathcal{M}^{\hat{\upmu}
  \times \hat{\upmu}}_{|\Sing(V)| \times |\Sing(W)|}$.
\end{corollary}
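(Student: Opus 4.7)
The plan is to decompose both sides into pieces indexed by pairs of critical values and to identify corresponding pieces via the local motivic Thom-Sebastiani theorem. Using the identification $|\Sing(V \circledast W)| = |\Sing(V)| \times |\Sing(W)|$ from \eqref{eq:Sing-V*W} together with the slicing
$$|\Sing(V \circledast W)| = \coprod_{c \in \Crit(V \circledast W)} |\Sing(V \circledast W) \cap (X \times Y)_c|$$
supplied by \eqref{eq:63}, and noting that any point of $\Sing(V) \times \Sing(W)$ lying in the fiber over $c$ necessarily comes from critical points $(x,y)$ with $V(x) = a \in \Crit(V)$, $W(y) = b \in \Crit(W)$, and $a+b=c$, I observe that each slice further refines as an open-closed decomposition
$$|\Sing(V \circledast W) \cap (X \times Y)_c| = \coprod_{\substack{a \in \Crit(V),\, b \in \Crit(W) \\ a + b = c}} |\Sing(V) \cap X_a| \times |\Sing(W) \cap Y_b|.$$

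Next I apply Lemma~\ref{l:open-closed-decomposition} iteratively to split each class $\phi_{V \circledast W, c}$, viewed in $\mathcal{M}^{\hat{\upmu}}_{|\Sing(V \circledast W) \cap (X \times Y)_c|}$, according to this decomposition. For a fixed pair $(a,b)$ with $a+b=c$, the inclusion of $|\Sing(V) \cap X_a| \times |\Sing(W) \cap Y_b|$ into $|(X \times Y)_c|$ factors through $i_{a,b} \colon |X_a| \times |Y_b| \hookrightarrow |(X \times Y)_c|$. Theorem~\ref{t:thom-sebastiani-GLM-motivic-vanishing-cycles} then identifies the pullback $i_{a,b}^* \phi_{V \circledast W, c}$ with $\Psi(\phi_{V,a} \times \phi_{W,b})$ in $\mathcal{M}^{\hat{\upmu}}_{|X_a| \times |Y_b|}$, and this class is supported on $|\Sing(V) \cap X_a| \times |\Sing(W) \cap Y_b|$ by Proposition~\ref{p:motivic-vanishing-cycles-over-singular-part} together with the fact that $\Psi$ preserves the base. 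Hence the $(a,b)$-piece of $\phi_{V \circledast W, c}$, pushed into $|\Sing(V)| \times |\Sing(W)|$, equals $\Psi(\phi_{V,a} \times \phi_{W,b})$ viewed on that same product.

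Summing over all $c$ and all admissible $(a,b)$, and using that the decomposition of $|\Sing(V)| \times |\Sing(W)|$ obtained from \eqref{eq:63} is compatible with the product decomposition of the two factors, yields
$$\tildew{\phi}_{V \circledast W} = \sum_{a \in \Crit(V),\, b \in \Crit(W)} \Psi(\phi_{V,a} \times \phi_{W,b}).$$
The right hand side of \eqref{eq:global-thom-sebastiani} expands, by bilinearity of $\times$ and $K_0(\Var_\kk)$-linearity of $\Psi$, to the same sum, so both sides agree. The main obstacle I anticipate is the careful bookkeeping of supports and pushforwards: one must verify that the pieces $\Psi(\phi_{V,a} \times \phi_{W,b})$ obtained from local Thom-Sebastiani on $|X_a| \times |Y_b|$ reassemble in $\mathcal{M}^{\hat{\upmu}}_{|\Sing(V)| \times |\Sing(W)|}$ without overlap or cancellation, which amounts to checking that the slices indexed by distinct $(a,b)$ remain disjoint in the global decomposition. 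Once that is in place, the proof reduces to repeated application of Lemma~\ref{l:open-closed-decomposition} and Theorem~\ref{t:thom-sebastiani-GLM-motivic-vanishing-cycles}.
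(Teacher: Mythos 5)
Your proposal is correct and follows essentially the same approach as the paper: decompose $|\Sing(V)| \times |\Sing(W)| = |\Sing(V \circledast W)|$ into the closed pieces $|\Sing(V) \cap X_a| \times |\Sing(W) \cap Y_b|$ indexed by $(a,b) \in \Crit(V) \times \Crit(W)$, reduce via Lemma~\ref{l:open-closed-decomposition} to checking equality on each piece, and identify each restriction with $\Psi(\phi_{V,a} \times \phi_{W,b})$ by Theorem~\ref{t:thom-sebastiani-GLM-motivic-vanishing-cycles} together with Proposition~\ref{p:motivic-vanishing-cycles-over-singular-part} and Remark~\ref{rem:Psi-compatible-with-pushforward-pullback}. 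The only cosmetic difference is that you first slice by $c = a+b$ and then refine by admissible pairs $(a,b)$, whereas the paper goes directly to the $(a,b)$-indexed decomposition and phrases the reduction in terms of pullbacks $(s_a \times s'_b)^*$ along the closed embeddings; the content is the same.
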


\begin{proof}
  Let $s_a \colon |\Sing(V) \cap X_a| \ra |\Sing(V)|$ be
  the closed embedding. Then obviously
  $s_a^*(\tildew{\phi})=\phi_{V,a}$. 
  Define $s'_b$ and $s''_c$ similarly for $W$ and $V \circledast W$.
  From \eqref{eq:63} and \eqref{eq:Sing-V*W} we see that
  $|\Sing(V \circledast W)|$ is the disjoint finite union of its
  closed  
  subvarieties $|\Sing(V) \cap X_a| \times |\Sing(W) \cap Y_b|$ where
  $(a,b) \in \Crit(V) \times \Crit(W)$.
  By Lemma~\ref{l:open-closed-decomposition} it is therefore
  enough to show that both sides of
  \eqref{eq:global-thom-sebastiani}
  coincide when restricted to each of these subvarieties.
  Consider the following commutative diagram
  \begin{equation*}
    \xymatrix{
      {|\Sing(V) \cap X_a| \times |\Sing(W) \cap Y_b|}
      \ar[d]^-{s_a \times s'_b} 
      \ar[r]^-{\iota_{a,b}}
      &
      {|\Sing(V \circledast W) \cap (X \times
        Y)_{a+b}|}
      \ar[d]^-{s''_{a+b}}\\
      {|\Sing(V)| \times |\Sing(W)|}
      \ar@{}[r]|-{=}
      & 
      {|\Sing(V \circledast W)|.}
    }
  \end{equation*}
  If we apply $(s_a \times s'_b)^*$ to both sides of
  \eqref{eq:global-thom-sebastiani}
  we obtain on the left
  \begin{equation*}
    \iota_{a,b}^*((s''_{a+b})^*(\tildew{\phi}_{V \circledast W}))
    = \iota_{a,b}^*(\phi_{V \circledast W,a+b})
  \end{equation*}
  and on the right
  \begin{equation*}
    \Psi((s_a \times s'_b)^*(\tildew{\phi}_V \times
    \tildew{\phi}_W)) 
    =
    \Psi(s_a^*(\tildew{\phi}_V) \times (s'_b)^*(\tildew{\phi}_W)) 
    =
    \Psi(\phi_{V,a} \times \phi_{W,b})
  \end{equation*}
  where we use
  Remark~\ref{rem:Psi-compatible-with-pushforward-pullback}. 
  But 
  $\iota_{a,b}^*(\phi_{V \circledast W,a+b})
  = \Psi(\phi_{V,a} \times \phi_{W,b})$
  is just a reformulation of
  Theorem~\ref{t:thom-sebastiani-GLM-motivic-vanishing-cycles}
  using 
  Proposition~\ref{p:motivic-vanishing-cycles-over-singular-part}
  and Remark~\ref{rem:Psi-compatible-with-pushforward-pullback}. 
\end{proof}


\begin{definition}
  \label{d:total-motivic-vanishing-cycles}
  For $V \colon  X \ra \DA^1_\kk$ as above we define 
  \begin{equation}
    \label{eq:phi-V-DA1}
    (\phi_V)_{\DA^1_\kk} := V_!(\tildew{\phi}_V)=\sum_{a \in \kk}
    V_!(\phi_{V,a}) 
    \in \tilde{\mathcal{M}}^{\hat{\upmu}}_{\DA^1_\kk} 
  \end{equation}
  where we use the isomorphism \eqref{eq:62}
  in order to change the target of
  $V_! \colon \mathcal{M}_{|\Sing(V)|}^{\hat{\upmu}} \ra 
  \mathcal{M}^{\hat{\upmu}}_{\DA^1_\kk}$ to
  $\tilde{\mathcal{M}}^{\hat{\upmu}}_{\DA^1_\kk}$.
\end{definition}

Recall the convolution product
\eqref{eq:convolution-A1}. 

\begin{corollary}
  \label{c:thom-sebastiani-motivic-vanishing-cycles}
  Let $V \colon X \ra \DA^1_\kk$ and $W \colon Y \ra \DA^1_\kk$
  be as above. 
  Then
  \begin{equation*}
    (\phi_{V \circledast W})_{\DA^1_\kk}
    =
    (\phi_{V})_{\DA^1_\kk}
    \star
    (\phi_{W})_{\DA^1_\kk}
  \end{equation*}
  in
  $(\tilde{\mathcal{M}}^{\hat{\upmu}}_{\DA^1_\kk},\star)$.
\end{corollary}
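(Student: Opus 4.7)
The plan is to unwind the definition of $\star$ from \eqref{eq:convolution-A1} on the right-hand side and reduce the claim to the global Thom-Sebastiani identity of Corollary~\ref{c:global-thom-sebastiani}. Since everything is defined over $\tilde{\mathcal{M}}$, I would work at the level of $\mathcal{M}^{\hat{\upmu}}$ and then transport via the isomorphism \eqref{eq:62}.

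First I would expand
\begin{equation*}
  (\phi_V)_{\DA^1_\kk}\star(\phi_W)_{\DA^1_\kk}
  = \Psi\bigl(\add_!\bigl(V_!(\tildew{\phi}_V)\times W_!(\tildew{\phi}_W)\bigr)\bigr),
\end{equation*}
using Definition~\ref{d:convolution-over-A1}. The next step is to move both pushforwards to the outside. By the standard compatibility of pushforward with external product (which holds by base change and can be checked on classes of varieties), one has
\begin{equation*}
  V_!(\tildew{\phi}_V)\times W_!(\tildew{\phi}_W)
  = (V\times W)_!\bigl(\tildew{\phi}_V\times\tildew{\phi}_W\bigr)
\end{equation*}
as elements of $\mathcal{M}^{\hat{\upmu}\times\hat{\upmu}}_{\DA^1_\kk\times\DA^1_\kk}$, where $V\times W$ is regarded as a morphism $|\Sing(V)|\times|\Sing(W)|\to\DA^1_\kk\times\DA^1_\kk$. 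Then Remark~\ref{rem:Psi-compatible-with-pushforward-pullback} lets me pull $\Psi$ past $(V\times W)_!$, yielding
\begin{equation*}
  \Psi\bigl((V\times W)_!(\tildew{\phi}_V\times\tildew{\phi}_W)\bigr)
  = (V\times W)_!\bigl(\Psi(\tildew{\phi}_V\times\tildew{\phi}_W)\bigr).
\end{equation*}

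At this point Corollary~\ref{c:global-thom-sebastiani} applies: since by \eqref{eq:Sing-V*W} we have $|\Sing(V\circledast W)| = |\Sing(V)|\times|\Sing(W)|$ (reducedness of products being guaranteed by our standing conventions on $\kk$), the right-hand factor becomes $\tildew{\phi}_{V\circledast W}$. Finally, the identity $\add\circ(V\times W) = V\circledast W$ of maps $|\Sing(V)|\times|\Sing(W)|\to\DA^1_\kk$ combined with functoriality of $\add_!\circ(V\times W)_! = (V\circledast W)_!$ gives
\begin{equation*}
  \add_!\bigl((V\times W)_!(\tildew{\phi}_{V\circledast W})\bigr)
  = (V\circledast W)_!(\tildew{\phi}_{V\circledast W})
  = (\phi_{V\circledast W})_{\DA^1_\kk},
\end{equation*}
which is the desired identity in $\tilde{\mathcal{M}}^{\hat{\upmu}}_{\DA^1_\kk}$.

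The substantive input is Corollary~\ref{c:global-thom-sebastiani}; everything else is pure formalism of pushforwards and the compatibility of $\Psi$ with them. The only step that deserves brief care is the bookkeeping surrounding $|\Sing(V\circledast W)| = |\Sing(V)|\times|\Sing(W)|$ and the passage from $\mathcal{M}^{\hat{\upmu}}_{\DA^1_\kk}$ to $\tilde{\mathcal{M}}^{\hat{\upmu}}_{\DA^1_\kk}$ via \eqref{eq:62}; I do not expect any genuine obstacle beyond that.
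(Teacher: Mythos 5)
Your proof is correct and is essentially the paper's own argument run in the opposite direction: you start from $(\phi_V)_{\DA^1_\kk}\star(\phi_W)_{\DA^1_\kk}$, unwind the definition of $\star$, pass $\Psi$ through the pushforwards via Remark~\ref{rem:Psi-compatible-with-pushforward-pullback}, and invoke Corollary~\ref{c:global-thom-sebastiani}, whereas the paper applies $(V\circledast W)_!=\add_!(V\times W)_!$ to Corollary~\ref{c:global-thom-sebastiani} and then pushes $\Psi$ inward. The only cosmetic slip is that your intermediate display only records commuting $\Psi$ with $(V\times W)_!$ and not also with $\add_!$, but both are covered by the same remark and your final line uses both, so nothing substantive is missing.
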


\begin{proof}
  Just apply $(V \circledast W)_!=\add_!(V \times W)_!$ to 
  \eqref{eq:global-thom-sebastiani} and note that
  \begin{multline*}
    \add_!((V \times W)_!(\Psi(\tildew{\phi}_V \times
    \tildew{\phi}_W)))
    =\add_!(\Psi((V \times W)_!(\tildew{\phi}_V \times
    \tildew{\phi}_W)))\\
    =\add_!(\Psi(V_!(\tildew{\phi}_V) \times
    W_!(\tildew{\phi}_W)))
    =
    (\phi_{V})_{\DA^1_\kk}
    \star
    (\phi_{W})_{\DA^1_\kk}.
  \end{multline*}
  using
  Remark~\ref{rem:Psi-compatible-with-pushforward-pullback}. 
\end{proof}


Define
$\ul{\phi_V}$ as the image of
$(\phi_V)_{\DA^1_\kk}$ under the morphism $\epsilon_!$ 
of rings from Lemma~\ref{l:convolution-products-compatible}, i.\,e.\
\begin{equation}
  \label{eq:phi-V-k}
  \ul{\phi_V} := 
  \epsilon_!((\phi_V)_{\DA^1_\kk})
  =
  \sum_{a \in \kk} (\epsilon_a)_!(\phi_{V,a})
  \in \mathcal{M}^{\hat{\upmu}}_{\kk} 
\end{equation}
where $\epsilon_a \colon |\Sing(V) \cap X_a| \ra \Spec \kk$
denotes the structure morphism.

\begin{corollary}
  \label{c:thom-sebastiani-motivic-vanishing-cycles-over-k}
  We have
  $\ul{\phi_{V \circledast W}}
  =
  \ul{\phi_{V}}
  *
  \ul{\phi_{W}}$
  in
  $\mathcal{M}^{\hat{\upmu}}_{\kk}$.
\end{corollary}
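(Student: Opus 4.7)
The proof is essentially a one-line consequence of the two results immediately preceding the statement. The plan is simply to apply the ring homomorphism $\epsilon_!$ from Lemma~\ref{l:convolution-products-compatible} to the equation of Corollary~\ref{c:thom-sebastiani-motivic-vanishing-cycles}.

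More precisely, by the definition \eqref{eq:phi-V-k} we have $\ul{\phi_V} = \epsilon_!((\phi_V)_{\DA^1_\kk})$, and similarly for $W$ and $V \circledast W$. Corollary~\ref{c:thom-sebastiani-motivic-vanishing-cycles} gives the identity
\begin{equation*}
  (\phi_{V \circledast W})_{\DA^1_\kk} = (\phi_V)_{\DA^1_\kk} \star (\phi_W)_{\DA^1_\kk}
\end{equation*}
in $(\tilde{\mathcal{M}}^{\hat{\upmu}}_{\DA^1_\kk}, \star)$. Since Lemma~\ref{l:convolution-products-compatible} asserts that $\epsilon_!$ is a morphism of $\mathcal{M}_\kk$-algebras from $(\tilde{\mathcal{M}}^{\hat{\upmu}}_{\DA^1_\kk}, \star)$ to $(\mathcal{M}^{\hat{\upmu}}_\kk, *)$, applying it to both sides yields the desired equality $\ul{\phi_{V \circledast W}} = \ul{\phi_V} * \ul{\phi_W}$ in $\mathcal{M}^{\hat{\upmu}}_\kk$.

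There is no genuine obstacle here, since both ingredients have already been established. The only point worth verifying, perhaps implicitly, is the compatibility between the two descriptions of $\ul{\phi_V}$ in \eqref{eq:phi-V-k}: the sum $\sum_{a \in \kk} (\epsilon_a)_! (\phi_{V,a})$ matches $\epsilon_!((\phi_V)_{\DA^1_\kk})$ by functoriality of pushforward, using the factorization of $\epsilon_a$ through $V$ followed by $\epsilon$, together with \eqref{eq:phi-V-DA1}. Once this is noted, the corollary is immediate.
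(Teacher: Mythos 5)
Your proof is correct and is exactly the argument the paper intends: the paper's proof states that the corollary is "obvious from Lemma~\ref{l:convolution-products-compatible} and Corollary~\ref{c:thom-sebastiani-motivic-vanishing-cycles}," and you have simply unpacked that remark by applying the ring homomorphism $\epsilon_!$ to both sides.
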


\begin{proof}
  This is obvious from
  Lemma~\ref{l:convolution-products-compatible} 
  and Corollary~\ref{c:thom-sebastiani-motivic-vanishing-cycles}.
\end{proof}

\section{Motivic vanishing cycles measure}
\label{sec:motiv-vanish-fiber-1}

\subsection{Some reminders}
\label{sec:some-reminders}

We recall some facts from
\cite[3.7-3.9]{guibert-loeser-merle-convolution}.
Let $X$ be a smooth connected $\kk$-variety and $V
\colon X \ra \DA^1_\kk$ a morphism. Let $U \subset X$ be a dense
open subvariety. Then Guibert, Loeser and Merle
define 
in \cite[Prop.~3.8]{guibert-loeser-merle-convolution}
an element 
\begin{equation*}
  \mathcal{S}_{V, U, X} \in
  \mathcal{M}_{|X_0|}^{\hat{\upmu}}  
\end{equation*}
(they denote this element just
by $\mathcal{S}_{V,U}$).

\begin{remark}
  The remarks at the end of 
  \cite[3.8]{guibert-loeser-merle-convolution}
  (and Remarks~\ref{rem:motivic-fibers-signs} and
  \ref{rem:constant-potential})
  imply:
  if $U=X$ we have
  \begin{equation}
    \label{eq:43} 
    \mathcal{S}_{V, X, X}=
    \mathcal{S}_V=\psi_{V,0};
  \end{equation}
  if $V=0$ we have
  \begin{equation*}
     \mathcal{S}_{V, U, X}=0=\psi_{V,0}=\mathcal{S}_{V}
  \end{equation*}
\end{remark}

\begin{theorem}
  [{\cite[Thm.~3.9]{guibert-loeser-merle-convolution}}]  
  \label{t:GLM-measure}
  Let $\alpha \colon A \ra \DA^1_\kk$ be an $\DA^1_\kk$-variety. 
  Then there exists a unique $\mathcal{M}_\kk$-linear map
  \begin{equation*}
    \mathcal{S}_\alpha^{\mathcal{M}_A} \colon \mathcal{M}_A \ra
    \mathcal{M}^{\hat{\upmu}}_{|A_0|} 
  \end{equation*}
  such that for every proper morphism $V' \colon Z \ra A$
  where $Z$ 
  is a smooth and connected $\kk$-variety, and every dense open
  subvariety $U$ of $Z$ 
  we have
  \begin{equation*}
    \mathcal{S}_\alpha^{\mathcal{M}_A}([U \ra A]) =
    V'_!(\mathcal{S}_{\alpha \circ V', U, X}).
  \end{equation*}
\end{theorem}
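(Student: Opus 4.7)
The plan is to prove uniqueness and existence separately, using a relative version of Bittner's presentation theorem.

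For \emph{uniqueness}, we show that the classes $[U \to A]$ appearing in the prescribed formula generate $\mathcal{M}_A$ as an $\mathcal{M}_\kk$-module. Any element of $\mathcal{M}_A$ is an $\mathcal{M}_\kk$-linear combination of classes $[W \to A]$ of $A$-varieties. Given such a $W$, Nagata compactification produces a proper $A$-variety $\bar{W}$ containing $W$ as a dense open subvariety, and Hironaka's resolution applied to $\bar{W}$ and its reduced boundary yields a smooth proper $A$-variety $Z \to A$ having $W$ as a dense open subvariety. Decomposing $Z$ into connected components reduces to the required form. Since the formula dictates the value on every such generator, uniqueness follows.

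For \emph{existence}, the construction proceeds as follows. First, one extends the GLM definition of $\mathcal{S}_{V,U,X}$ from dense open $U \subset X$ to arbitrary locally closed $U$ by additivity; this is consistent because the relative motivic zeta function $Z_{V,U,X}(T)$ is already additive in $U$ at the level of arc-space integrals. Then, for any $A$-variety $W \to A$, one chooses a smooth proper compactification $V' \colon Z \to A$ having $W$ as a locally closed subvariety, and sets
\[
  \mathcal{S}_\alpha^{\mathcal{M}_A}([W \to A]) := V'_!(\mathcal{S}_{\alpha \circ V', W, Z}).
\]
Three compatibilities must be verified: (a) the right-hand side is independent of the choice of $V'$; (b) the assignment is compatible with the scissor relations in $K_0(\Var_A)$; and (c) it descends to the localization inverting $\DL_A$. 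Item (b) reduces, via the additivity of $\mathcal{S}_{V,U,X}$ in its second argument, to item (a); item (c) reduces to a direct computation for the class $[\DA^1_A \to A]$ using the embedded-resolution formula.

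The main obstacle is item (a). Given two smooth proper compactifications $V'_i \colon Z_i \to A$ of $W$, resolve the closure of the graph of the birational map $Z_1 \dashrightarrow Z_2$ to obtain a third smooth proper $A$-variety $Z_3 \to A$ dominating both. By the weak factorization theorem, each morphism $Z_3 \to Z_i$ decomposes as a sequence of blow-ups and blow-downs along smooth centers contained in $Z_i \setminus W$. It therefore suffices to show that $V'_!(\mathcal{S}_{\alpha \circ V', W, Z})$ is unchanged when $Z$ is replaced by the blow-up of $Z$ along a smooth center disjoint from $W$. This is a local computation inside the alternating-sum formula for $\mathcal{S}_{V, W, Z}$ over strata of an embedded resolution (analogous to \eqref{eq:formula-mot-nearby-fiber}): the new exceptional divisor contributes terms that exactly cancel the modifications of the previously existing strata. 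This blow-up invariance is the technical heart of the argument and is essentially the content of the proof of Theorem~3.9 in \cite{guibert-loeser-merle-convolution}.
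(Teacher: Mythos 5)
The paper does not prove this theorem: the tag in brackets makes clear it is imported wholesale from Guibert--Loeser--Merle (\cite[Thm.~3.9]{guibert-loeser-merle-convolution}), and the surrounding text (``We recall some facts from \cite[3.7-3.9]{guibert-loeser-merle-convolution}'') confirms the authors are merely restating an external result. So there is no internal proof to compare against, and a proof sketch was never expected here.

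That said, your reconstruction of the GLM-style argument has one genuine gap. In the uniqueness step you take an arbitrary $A$-variety $W$, apply Nagata compactification to get a proper $\bar W \to A$, and then resolve $\bar W$, asserting that the result $Z$ is a smooth proper $A$-variety containing $W$ as a dense open subvariety. This fails when $W$ itself is singular: resolution of singularities modifies the singular locus of $\bar W$, which includes the singular points of $W$, so $W$ will in general not embed as an open subvariety of the resolved $Z$. You need to first reduce to the case of smooth $W$ --- for example by Noetherian stratification of $W$ into smooth locally closed pieces, or by invoking Bittner's presentation of $K_0(\Var_A)$ directly --- and only then compactify and resolve (using that canonical resolution does not modify the smooth locus). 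The rest of the outline (extending $\mathcal{S}_{V,U,X}$ to locally closed $U$, independence of the compactification via weak factorization, compatibility with scissor relations and with inverting $\DL$) is a reasonable and standard high-level plan, though it is closer in spirit to Bittner's approach in \cite{bittner-motivic-nearby} than to the arc-space/change-of-variables route actually taken in \cite{guibert-loeser-merle-convolution}.
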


Note that given any morphism $V \colon U \ra A$ where $U$ is a smooth
connected $\kk$-variety, there is a smooth connected
$\kk$-variety $Z$ containing $U$ as a dense open subscheme and a
proper morphism $V' \colon Z \ra A$ extending $V$ (use Nagata
compactification and resolve the singularities).

In particular, if $V$ is a proper morphism, then by definition
of $\mathcal{S}_\alpha^{\mathcal{M}_A}$ and using
\eqref{eq:43} we have
\begin{equation}
  \label{eq:42}
  \mathcal{S}_\alpha^{\mathcal{M}_A}([U \xra{V} A]) 
  = V_!(\mathcal{S}_{\alpha \circ V, U, U})
  = V_!(\mathcal{S}_{\alpha \circ V})
  = V_!(\psi_{\alpha \circ V,0}).
\end{equation}
In particular, if $A$ is smooth and connected we obtain 
$\mathcal{S}_\alpha^{\mathcal{M}_A}([A \xra{\id} A]) 
= \mathcal{S}_{\alpha}=\psi_{\alpha,0}$ which justifies the notation
$\mathcal{S}_\alpha^{\mathcal{M}_A}$.

We will apply Theorem~\ref{t:GLM-measure} only in the case that
$\alpha$ is a translation $\DA^1_\kk \ra \DA^1_\kk$, $x \mapsto
x-a$, for some $a \in \kk$.

\subsection{Additivity of the motivic vanishing cycles}
\label{sec:addit-vanish-cycl}

Theorem~\ref{t:GLM-measure} has the following consequence.


\begin{theorem}
  \label{t:total-vanishing-cycles-group-homo}
  There exists a unique $\mathcal{M}_\kk$-linear map
  \begin{equation*}
    \Phi' \colon  \mathcal{M}_{\DA^1_\kk} \ra
    \tilde{\mathcal{M}}_{\DA^1_\kk}^{\hat{\upmu}} 
  \end{equation*}
  such that 
  \begin{equation*}
    \Phi'([X \xra{V} \DA^1_\kk]) = (\phi_{V})_{\DA^1_\kk}
  \end{equation*}
  for all proper morphisms $V \colon  X \ra \DA^1_\kk$ of
  $\kk$-varieties 
  where $X$ is smooth over $\kk$ and connected
  (for the definition of $(\phi_{V})_{\DA^1_\kk}$ see
  Definition~\ref{d:total-motivic-vanishing-cycles}). 
\end{theorem}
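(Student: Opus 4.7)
The plan is to build $\Phi'$ point-by-point: for each $a \in \kk$ I will construct an $\mathcal{M}_\kk$-linear map $\Phi'_a \colon \mathcal{M}_{\DA^1_\kk} \to \mathcal{M}^{\hat{\upmu}}_{\DA^1_\kk}$ sending the class of a proper $V$ (from smooth connected $X$) to $V_!(\phi_{V,a})$, and then set $\Phi' := \sum_{a \in \kk} \Phi'_a$. The only delicate point will be showing that this sum is finitely supported on each input, for which Bittner's generation theorem is crucial.

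Given $a \in \kk$, let $\alpha_a \colon \DA^1_\kk \to \DA^1_\kk$, $x \mapsto x - a$; then Theorem~\ref{t:GLM-measure} applied with this $\alpha_a$ and $A = \DA^1_\kk$ (so that $|A_0|=\{a\}$) produces an $\mathcal{M}_\kk$-linear map $\mathcal{S}_a \colon \mathcal{M}_{\DA^1_\kk} \to \mathcal{M}^{\hat{\upmu}}_{\{a\}}$ which by~\eqref{eq:42} sends $[X \xra{V} \DA^1_\kk]$ to $V_!(\psi_{\alpha_a V,0}) = V_!(\psi_{V,a})$ for proper $V$ from smooth connected $X$. Base change along $i_a \colon \{a\} \hra \DA^1_\kk$ defines an additive functor $(Y \xra{W} \DA^1_\kk) \mapsto (|W^{-1}(a)| \to \{a\})$, hence an $\mathcal{M}_\kk$-linear map $F_a \colon \mathcal{M}_{\DA^1_\kk} \to \mathcal{M}^{\hat{\upmu}}_{\{a\}}$ (with trivial $\hat{\upmu}$-action). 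Set
\begin{equation*}
  \Phi'_a(c) := (i_a)_!\bigl(F_a(c) - \mathcal{S}_a(c)\bigr) \in \mathcal{M}^{\hat{\upmu}}_{\DA^1_\kk}.
\end{equation*}
A direct check using~\eqref{eq:defi-mot-van-fiber} then gives $\Phi'_a([X \xra{V} \DA^1_\kk]) = V_!(\phi_{V,a})$ whenever $V$ is proper and $X$ is smooth connected.

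To conclude, I would invoke the relative version of Bittner's theorem in characteristic zero~\cite{bittner-motivic-nearby}: $K_0(\Var_{\DA^1_\kk})$, and hence $\mathcal{M}_{\DA^1_\kk}$ as an $\mathcal{M}_\kk$-module, is generated by classes $[X \xra{V} \DA^1_\kk]$ with $V$ proper and $X$ smooth connected. For any $c \in \mathcal{M}_{\DA^1_\kk}$ pick such a finite expression $c = \sum_i r_i [X_i \xra{V_i} \DA^1_\kk]$; then $\Phi'_a(c) = \sum_i r_i (V_i)_!(\phi_{V_i,a})$ vanishes whenever $a$ lies outside the finite set $\bigcup_i \Crit(V_i)$, by Corollary~\ref{c:mot-van-fiber-zero-if-Xa-smooth}. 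This renders $\Phi' := \sum_{a \in \kk} \Phi'_a$ well-defined as an $\mathcal{M}_\kk$-linear map; composing with the isomorphism~\eqref{eq:62} lands us in $\tilde{\mathcal{M}}^{\hat{\upmu}}_{\DA^1_\kk}$. By Definition~\ref{d:total-motivic-vanishing-cycles} we obtain $\Phi'([X \xra{V} \DA^1_\kk]) = (\phi_V)_{\DA^1_\kk}$ for proper $V$ from smooth connected $X$, and uniqueness follows from the same generation statement. The main obstacle in the plan is precisely this summability step: finiteness of $\{a : \Phi'_a(c) \neq 0\}$ is not visible from an arbitrary representation of $c$, and Bittner's theorem is needed to reduce to generators on which the geometric vanishing of Corollary~\ref{c:mot-van-fiber-zero-if-Xa-smooth} kicks in.
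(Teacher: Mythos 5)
Your proposal is correct and follows essentially the same route as the paper's own proof: for each $a\in\kk$ you build the map $\Phi'_a$ by combining the pullback to the fiber over $a$ with the Guibert--Loeser--Merle map $\mathcal{S}_{\alpha_a}^{\mathcal{M}_{\DA^1_\kk}}$ from Theorem~\ref{t:GLM-measure}, then sum over $a$ and transport through~\eqref{eq:62}. The one place where you are more explicit than the paper is the summability of $\sum_a \Phi'_a$ -- the paper simply asserts that finitely many $\Phi'_a(m)$ are nonzero, whereas you correctly derive this from Bittner's generation theorem plus Corollary~\ref{c:mot-van-fiber-zero-if-Xa-smooth} (one should also invoke Remark~\ref{rem:constant-potential} to cover constant generators), which is precisely the intended justification.
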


\begin{proof}
  Uniqueness is clear 
  since $K_0(\Var_{\DA^1_\kk})$ is generated by the classes of
  proper morphisms $V \colon  X \ra \DA^1_\kk$ of $\kk$-varieties
  with $X$ 
  connected and smooth over $\kk$ 
  (and relations given by the blowing-up relations), see
  \cite[Thm.~5.1]{bittner-euler-characteristic}.

  Let $a \in \kk$ and let
  $\gamma_a \colon |X_a| \ra \{a\}=\Spec \kk$ be
  the obvious morphism. Apply Theorem~\ref{t:GLM-measure} to the
  morphism $\alpha \colon \DA^1_\kk \ra \DA^1_\kk$,
  $x \mapsto x-a$. We obtain an $\mathcal{M}_\kk$-linear map
  $\mathcal{M}_{\DA^1_\kk} \ra \mathcal{M}_{\{a\}}^{\hat{\upmu}}$
  that maps $[V \colon X \ra \DA^1_\kk]$ to
  $-(\gamma_a)_!(\psi_{V,a})$
  (use \eqref{eq:42}; we add a global minus sign)
  whenever $V \colon X \ra \DA^1_\kk$
  is proper with $X$ connected and smooth over $\kk$.  

  Obviously there
  is a unique 
  $\mathcal{M}_\kk$-linear map
  $\mathcal{M}_{\DA^1_\kk} \ra
  \mathcal{M}_{\{a\}}^{\hat{\upmu}}$
  mapping $[V \colon  X \ra
  \DA^1_\kk]$ to 
  $[|X_a| \ra \{a\}]=(\gamma_a)_!([|X_a| \ra |X_a|)$
  for any morphism $V \colon  X \ra \DA^1_\kk$ of 
  $\kk$-varieties.
  
  Let $\Phi'_a \colon  \mathcal{M}_{\DA^1_\kk} \ra
  \mathcal{M}_{\{a\}}^{\hat{\upmu}}$ be the sum of these two
  maps. If
  $V \colon X \ra \DA^1_\kk$
  is proper with $X$ connected and smooth over $\kk$ we have 
  $\Phi'_a([V \colon X \ra \DA^1_\kk]) =(\gamma_a)_!(\phi_{V,a})$
  by the  
  definition of the motivic vanishing cycles
  \eqref{eq:defi-mot-van-fiber}. 

  For any $a \in \kk,$ let $i_a \colon \{a\} \ra \DA^1_\kk$ be the
  inclusion. Observe that 
  \begin{equation*}
    \sum_{a \in \kk} (i_a)_!(\Phi'_a) \colon 
    \mathcal{M}_{\DA^1_\kk} \ra \mathcal{M}_{\DA^1_\kk}^{\hat{\upmu}}    
  \end{equation*}
  is
  well defined since for any given $m \in \mathcal{M}_{\DA^1_\kk}$
  only finitely many $\Phi'_a(m)$ are nonzero. 
  The composition of this morphism with the isomorphism \eqref{eq:62}
  has the required properties. This proves existence.
\end{proof}

\begin{remark}
  \label{rem:Phi'-on-unity-and-A1-0-A1}
  If $Z$ is a smooth $\kk$-variety we have $\Phi'([Z \xra{0}
  \DA^1_\kk])= [Z \xra{0} \DA^1_\kk]$. 
  If $Z$ is proper over $\kk$ this follows from 
  Remark~\ref{rem:constant-potential}. Otherwise we can
  compactify $Z$ to a smooth proper $\kk$-variety $\ol{Z}$ such
  that $\ol{Z}-Z$ is a simple normal crossing divisor and then
  express the class of $Z$ in terms of $\ol{Z}$ and the various
  smooth intersections of the involved smooth prime divisors.
  
  In particular we have
  $\Phi'([\Spec \kk \xra{0} \DA^1_\kk])=[\Spec \kk \xra{0}
  \DA^1_\kk]$
  and 
  $\Phi'([\DA^1_\kk \xra{0} \DA^1_\kk])=[\DA^1_\kk \xra{0}
  \DA^1_\kk]$
\end{remark}

\begin{remark}
  \label{rem:justify-sign-vanishing}
  We 
  keep our promise from
  Remark~\ref{rem:motivic-fibers-signs}
  to justify our sign choice.
  We do this
  by showing that
  Theorem~\ref{t:total-vanishing-cycles-group-homo} 
  does not hold if the right hand side of
  \eqref{eq:phi-V-DA1} 
  is replaced by
  $\sum_{a \in \kk} V_!(\mathcal{S}^\phi_{V-a})$.
  Assume that there is morphism $\Xi \colon
  \mathcal{M}_{\DA^1_\kk} \ra
  \mathcal{M}_{\DA^1_\kk}^{\hat{\upmu}}
  \cong
  \tilde{\mathcal{M}}_{\DA^1_\kk}^{\hat{\upmu}}$
  of abelian groups
  such that 
  $\Xi([X \xra{V} \DA^1_\kk]) = 
  \sum_{a \in \kk} V_!(\mathcal{S}^\phi_{V-a})$
  for all proper morphisms $V \colon  X \ra \DA^1_\kk$ of
  $\kk$-varieties 
  where $X$ is smooth over $\kk$ and connected.
  Remark~\ref{rem:constant-potential} implies that
  $\Xi([Z \xra{0}
  \DA^1_\kk])= (-1)^{\dim Z}[Z \xra{0} \DA^1_\kk]$ for all smooth proper
  connected $\kk$-varieties $Z$.

  Let $X$ be a smooth proper connected 2-dimensional
  $\kk$-variety 
  and $\tildew{X}$ its blowup in a 
  closed point $Y =\{x\} \subset X$. Let $E$ be the exceptional
  divisor. We view $X, \tildew{X}, Y, E$ as $\DA^1_\kk$-varieties
  via the zero morphism to $\DA^1_\kk$.
  In $K_0(\Var_{\DA^1_\kk})$ we obviously have $[X]-[Y]=
  [\tildew{X}] -[E]$. So if we apply $\Xi$ we obtain
  $[X]-[Y]=[\tildew{X}] +[E]$ since $E$ has odd dimension. 
  We obtain $2[E]=0$ in 
  $\mathcal{M}_{\DA^1_\kk}^{\hat{\upmu}}$.
  Let us explain why this is a contradiction.
  Note that $E \cong
  \DP^1_\kk$. Pulling back via the inclusion $\Spec \kk \xra{0}
  \DA^1_\kk$ and forgetting the group action shows that
  $2[\DP^1_\kk]=0$ in $\mathcal{M}_\kk$. 
  Taking the topological Euler characteristic with compact
  support (see \cite[Example
  4.3]{nicaise-sebag-grothendieck-ring-of-varieties}) yields the
  contradiction $4=0$ in $\DZ$.
\end{remark}

\begin{remark}
  \label{rem:Phi'-on-smooth-proper-morphisms}
  If $V \colon X \ra \DA^1_\kk$ is a smooth and proper morphism
  then 
  $\Phi'([X \xra{V} \DA^1_\kk])=0$. This follows from
  Corollary~\ref{c:mot-van-fiber-zero-if-Xa-smooth};
  note that
  $X$ is smooth over $\kk$ and $V$ is not constant (if $X$ is
  nonempty). 
\end{remark}

\begin{remark}
  \label{rem:Phi'-of-L-A1}
  We claim that $\Phi'(\DL_{\DA^1_\kk})=0$.
  Indeed, we have
  \begin{equation*}
    \DL_{\DA^1_\kk}=
    [\DA^1_{\DA^1_\kk}]=[\DA^1_\kk \times \DA^1_\kk \ra \DA^1_\kk]
    =
    [\DA^1_\kk \times \DP^1_\kk \ra \DA^1_\kk]
    -
    [\DA^1_\kk \times \Spec \kk \ra \DA^1_\kk]
  \end{equation*}
  in $K_0(\Var_{\DA^1_\kk})$. Now apply 
  Remark~\ref{rem:Phi'-on-smooth-proper-morphisms}.
  In fact, this argument 
  together with the compactification argument from
  Remark~\ref{rem:Phi'-on-unity-and-A1-0-A1}
  shows: if $Z$ is any smooth
  $\kk$-variety, then $\Phi'$ maps the class of the projection
  $\DA^1_\kk \times Z \ra \DA^1_\kk$ to zero.
\end{remark}

\begin{remark}
  \label{rem:Phi'-cannot-be-ring-morphism}
  Remark~\ref{rem:Phi'-of-L-A1} shows that
  the morphism $\Phi'$ from
  Theorem~\ref{t:total-vanishing-cycles-group-homo}
  is not a morphism of rings if we consider the usual 
  multiplication on
  $\mathcal{M}_{\DA^1_\kk}$: it
  maps the invertible element 
  $\DL_{\DA^1_\kk}$ to zero and hence would be the zero morphism
  (which it is not, by
  Remark~\ref{rem:Phi'-on-unity-and-A1-0-A1}).
  Therefore it seems presently more appropriate to restrict
  $\Phi'$ to  
  $K_0(\Var_{\DA^1_\kk})$. 
  See however Remark~\ref{rem:A1-0-A1-sent-to-invertible} below.
\end{remark}

\subsection{The motivic vanishing cycles measure}
\label{sec:motiv-vanish-cycl}

We define $\Phi$ to be the $K_0(\Var_\kk)$-linear composition
\begin{equation}
  \label{eq:52}
  \Phi \colon K_0(\Var_{\DA^1_\kk}) \ra 
  \mathcal{M}_{\DA^1_\kk} \xra{\Phi'}
  \tilde{\mathcal{M}}_{\DA^1_\kk}^{\hat{\upmu}} 
\end{equation}
where the second map is the morphism $\Phi'$ from
Theorem~\ref{t:total-vanishing-cycles-group-homo}.

Now we can state our main theorem which says that $\Phi$ is a
ring morphism if we equip source 
$K_0(\Var_{\DA^1_\kk})=  K_0(\Var^{\upmu_1}_{\DA^1_\kk})$ and
target $\tilde{\mathcal{M}}_{\DA^1_\kk}^{\hat{\upmu}}$ 
with the convolution product $\star$ 
from
section~\ref{sec:conv-vari-over}, see in particular
Remark~\ref{rem:no-group-action} and Proposition~\ref{p:A1-convolution-comm-ass-unit}.

\begin{theorem}
  \label{t:total-motivic-vanishing-cycles-ring-homo}
  The map \eqref{eq:52}
  from Theorem~\ref{t:total-vanishing-cycles-group-homo}
  is a morphism 
  \begin{equation*}
    \Phi \colon  (K_0(\Var_{\DA^1_\kk}), \star) \ra
    (\tilde{\mathcal{M}}_{\DA^1_\kk}^{\hat{\upmu}},\star) 
  \end{equation*}
  of $K_0(\Var_\kk)$-algebras.
  By composing with
  \eqref{epsilon!-star-*}
  we obtain a morphism
  \begin{equation}
    \label{eq:epsilon-Phi}
    \epsilon_! \circ \Phi \colon  (K_0(\Var_{\DA^1_\kk}), \star) \ra
    (\mathcal{M}_{\kk}^{\hat{\upmu}},*) 
  \end{equation}
  of $K_0(\Var_\kk)$-algebras. We call these two morphisms
  \textbf{motivic vanishing cycles measures}.
\end{theorem}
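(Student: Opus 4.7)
Since Theorem~\ref{t:total-vanishing-cycles-group-homo} already provides the $K_0(\Var_\kk)$-linearity of $\Phi$, what remains is to verify that $\Phi$ sends unit to unit and intertwines the two convolution products $\star$. The unit check is immediate from Remark~\ref{rem:Phi'-on-unity-and-A1-0-A1} together with Proposition~\ref{p:A1-convolution-comm-ass-unit}. Once $\Phi$ is known to be a ring morphism, the second claim concerning $\epsilon_!\circ\Phi$ is automatic, since $\epsilon_!$ is a morphism of $K_0(\Var_\kk)$-algebras by Lemma~\ref{l:convolution-products-compatible}.

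For multiplicativity $\Phi(A\star B)=\Phi(A)\star\Phi(B)$, the plan is to reduce via Bittner's theorem \cite{bittner-euler-characteristic} (as in the proof of Theorem~\ref{t:total-vanishing-cycles-group-homo}) and the $K_0(\Var_\kk)$-bilinearity of both sides to the case where $A=[X\xra{V}\DA^1_\kk]$ and $B=[Y\xra{W}\DA^1_\kk]$ with $V$, $W$ proper and $X$, $Y$ smooth connected. On such generators Remark~\ref{rem:no-group-action} gives $A\star B=[X\times Y\xra{V\circledast W}\DA^1_\kk]$, and $X\times Y$ is again smooth and connected. Corollary~\ref{c:thom-sebastiani-motivic-vanishing-cycles} already yields $\Phi(A)\star\Phi(B)=(\phi_V)_{\DA^1_\kk}\star(\phi_W)_{\DA^1_\kk}=(\phi_{V\circledast W})_{\DA^1_\kk}$, so the task reduces to the single identity
\[
\Phi([X\times Y\xra{V\circledast W}\DA^1_\kk])=(\phi_{V\circledast W})_{\DA^1_\kk}.
\]

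The main obstacle is that $V\circledast W$ is in general not proper (as the addition $\DA^1_\kk\times\DA^1_\kk\ra\DA^1_\kk$ is not), so the defining property of $\Phi$ from Theorem~\ref{t:total-vanishing-cycles-group-homo} does not apply to it directly. This is precisely where Proposition~\ref{p:compactification} is needed: it furnishes a smooth $\kk$-variety $Z$ containing $X\times Y$ as a dense open subvariety, together with a proper extension $\overline{V\circledast W}\colon Z\ra\DA^1_\kk$ of $V\circledast W$ satisfying $\Sing(\overline{V\circledast W})=\Sing(\overline{V\circledast W})\cap(X\times Y)=\Sing(V\circledast W)$. Using the scissor relation
\[
[X\times Y\xra{V\circledast W}\DA^1_\kk]=[Z\xra{\overline{V\circledast W}}\DA^1_\kk]-[Z\setminus(X\times Y)\xra{\overline{V\circledast W}|_{Z\setminus(X\times Y)}}\DA^1_\kk]
\]
in $K_0(\Var_{\DA^1_\kk})$, one computes the $\Phi$-image of both terms on the right. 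The first term, after decomposing $Z$ into its smooth connected components, evaluates by the defining property of $\Phi$ to $(\phi_{\overline{V\circledast W}})_{\DA^1_\kk}$, which coincides with $(\phi_{V\circledast W})_{\DA^1_\kk}$ by Corollary~\ref{c:mot-van-fiber-compactification} (applied fibrewise over each $a\in\kk$). The restricted map $\overline{V\circledast W}|_{Z\setminus(X\times Y)}$ is smooth (from the critical-locus condition) and proper (as $Z\setminus(X\times Y)$ is closed in $Z$, which is proper over $\DA^1_\kk$), so Remark~\ref{rem:Phi'-on-smooth-proper-morphisms} makes its $\Phi$-image vanish component by component. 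Combining these yields the required identity and hence multiplicativity; the hard part is thus entirely concentrated in constructing the compactification $Z$ with the controlled singular locus and verifying that the boundary contributes nothing to $\Phi$.
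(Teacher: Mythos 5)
Your overall strategy matches the paper's: reduce to Bittner generators, use Corollary~\ref{c:thom-sebastiani-motivic-vanishing-cycles} to identify $\Phi(A)\star\Phi(B)$ with $(\phi_{V\circledast W})_{\DA^1_\kk}$, and then invoke the compactification $h\colon Z\ra\DA^1_\kk$ of Proposition~\ref{p:compactification} to compute $\Phi$ on the non-proper morphism $V\circledast W$. The unit check and the $\epsilon_!$-part are also handled correctly.

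However, there is a genuine gap in how you dispose of the boundary contribution. You apply a single scissor relation and then assert that the restriction $\overline{V\circledast W}|_{Z\setminus(X\times Y)}$ is a \emph{smooth} morphism, citing the critical-locus condition. This is false in general: by part~\ref{enum:compactification-boundary-snc} of Proposition~\ref{p:compactification}, $Z\setminus(X\times Y)=\bigcup_{i=1}^s D_i$ is the support of a simple normal crossing divisor, hence singular wherever two or more $D_i$ cross. A $\kk$-scheme that is not smooth over $\kk$ cannot be smooth over $\DA^1_\kk$, so Remark~\ref{rem:Phi'-on-smooth-proper-morphisms} (which presupposes the source is smooth over $\kk$, and indeed $\Phi'$ is only characterized on proper morphisms from smooth connected varieties) does not apply to this restriction. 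Passing ``component by component'' does not repair this, since the connected components of the boundary are still singular along the crossings.

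The correct remedy is the one carried out in the paper: instead of a single scissor relation, apply the inclusion--exclusion expansion for the snc boundary,
\begin{equation*}
  [X\times Y\xra{V\circledast W}\DA^1_\kk]
  = [Z\xra{h}\DA^1_\kk]
  - \sum_i [D_i\xra{h_i}\DA^1_\kk]
  + \sum_{i<j}[D_{ij}\xra{h_{ij}}\DA^1_\kk] - \cdots,
\end{equation*}
and then use part~\ref{enum:compactification-intersections-proper-and-smooth} of Proposition~\ref{p:compactification}, which guarantees that \emph{each} $D_{i_1\dots i_p}$ is smooth quasi-projective and each $h_{i_1\dots i_p}$ is projective and smooth. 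Remark~\ref{rem:Phi'-on-smooth-proper-morphisms} now applies legitimately to every stratum, each boundary term vanishes under $\Phi$, and the argument closes exactly as you intended via Corollary~\ref{c:mot-van-fiber-compactification}.
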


\begin{proof}
  The second claim is obvious from
  Lemma~\ref{l:convolution-products-compatible}, so let us prove
  the first claim.
  Remark~\ref{rem:Phi'-on-unity-and-A1-0-A1} shows that $\Phi$ maps
  the identity element to the identity element.
  Remark~\ref{rem:Phi'-on-unity-and-A1-0-A1} shows that $\Phi$ is
  compatible with the algebra structure maps, cf.\ \eqref{eq:46}.

  We use that $K_0(\Var_{\DA^1_\kk})$ is generated by the classes of
  projective morphisms $V \colon  X \ra \DA^1_\kk$ with $X$
  a connected quasi-projective $\kk$-variety that is smooth over
  $\kk$ 
  (and relations given by the blowing-up relations), see
  \cite[Thm.~5.1]{bittner-euler-characteristic}.

  So let $X$ and $Y$ be connected quasi-projective $\kk$-varieties that
  are smooth over $\kk$ and let $V \colon X \ra \DA^1_\kk$ and $W \colon Y \ra
  \DA^1_\kk$ be projective morphisms.
  Then we know by
  Theorem~\ref{t:total-vanishing-cycles-group-homo}
  and
  Corollary
  \ref{c:thom-sebastiani-motivic-vanishing-cycles}
  that
  \begin{equation*}
    \Phi([X \xra{V} \DA^1_\kk]) \star
    \Phi([Y \xra{W} \DA^1_\kk])
    =
    (\phi_{V})_{\DA^1_\kk}
    \star
    (\phi_{W})_{\DA^1_\kk}
    = (\phi_{V \circledast W})_{\DA^1_\kk}.
  \end{equation*}
  Our aim is to show that
  \begin{equation*}
    (\phi_{V \circledast W})_{\DA^1_\kk}=  
    \Phi([X \times Y \xra{V \circledast W} \DA^1_\kk]).
  \end{equation*}
  This is not obvious since $V \circledast W \colon  X \times Y \ra
  \DA^1_\kk$ is not 
  proper in general. 

  We apply Proposition~\ref{p:compactification} below and
  use notation from there.
  We obtain the equality
  \begin{multline*}
    [X\times Y \xra{V \circledast W} \DA^1_\kk]
    =
    [Z \xra{h} \DA^1_\kk]
    -\sum_{i}[D_i\xra{h_i} \DA^1_\kk]
    +\sum_{i<j}[D_{ij} \xra{h_{ij}} \DA^1_\kk]\\
    - \dots + (-1)^s[D_{12 \dots s} \xra{h_{12 \dots s}} \DA^1_\kk]
  \end{multline*}
  in $K_0(\Var_{\DA^1_\kk})$.
  On the right-hand side, $Z$ and all $D_{i_1 \dots i_p}$ 
  are smooth quasi-projective $\kk$-varieties, $h$ is a projective
  morphism, and all 
  $h_{i_1 \dots i_p}$ are projective and smooth morphisms, 
  by part
  \ref{enum:compactification-intersections-proper-and-smooth} 
  of
  Proposition~\ref{p:compactification}.
  Hence we can compute $\Phi([X\times Y \xra{V \circledast W}
  \DA^1_\kk]$ now. 
  Remark~\ref{rem:Phi'-on-smooth-proper-morphisms}
  shows that
  $\Phi$ vanishes
  on all
  $[D_{i_1 \dots i_p}\xra{h_{i_1 \dots i_p}} \DA^1_\kk]$. We
  obtain
  \begin{equation*}
    \Phi([X\times Y \xra{V \circledast W} \DA^1_\kk])=\Phi([Z \xra{h}
    \DA^1_\kk]) = (\phi_{h})_{\DA^1_\kk}
  \end{equation*}
  and are left to show that
  \begin{equation*}
    (\phi_{h})_{\DA^1_\kk} = (\phi_{V \circledast W})_{\DA^1_\kk}.
  \end{equation*}
  But this holds true by 
  Corollary~\ref{c:mot-van-fiber-compactification}
  which we can apply by
  parts
  \ref{enum:compactification-diagram},
  \ref{enum:compactification-critical-points},
  \ref{enum:compactification-boundary-snc}
  of Proposition~\ref{p:compactification}.
\end{proof}

\begin{remark}
  \label{rem:evaluate-on-product}
  If $X$ and $Y$ are smooth connected $\kk$-varieties and $V
  \colon X \ra \DA^1_\kk$ and $W \colon Y \ra \DA^1_\kk$ are
  proper morphisms then
  Theorems~\ref{t:total-motivic-vanishing-cycles-ring-homo}
  and \ref{t:thom-sebastiani-GLM-motivic-vanishing-cycles}
  show that
  \begin{equation*}
    \Phi([X \times Y \xra{V \circledast W} \DA^1_\kk]) =
    \Phi([X \xra{V} \DA^1_\kk]) \ast \Phi([Y \xra{W} \DA^1_\kk]) 
    =
    (\phi_{V})_{\DA^1_\kk}
    \ast 
    (\phi_{V})_{\DA^1_\kk}
    =
    (\phi_{V \circledast W})_{\DA^1_\kk}.
  \end{equation*}
  So even though $V \circledast W$ might not be proper, the
  motivic vanishing cycles measure $\Phi$ sends it to 
  $(\phi_{V \circledast W})_{\DA^1_\kk}$. 
\end{remark}

\begin{remark}
  \label{rem:A1-0-A1-sent-to-invertible}
  Recall the element
  $\DL_{(\DA^1_\kk,0)}=[\DA^1_\kk \xra{0} \DA^1_\kk] \in
  K_0(\Var^{\upmu_n}_{\DA^1_\kk})$
  defined in \eqref{eq:def-L-A1-0}. For $n=1$ it is an element of
  $K_0(\Var_{\DA^1_\kk})$ and we have
  $(K_0(\Var_{\DA^1_\kk}), \star)[(\DL_{(\DA^1_\kk,0)})^{-1}]=
  (\tilde{\mathcal{M}}_{\DA^1_\kk}, \star)$.
  Remark~\ref{rem:Phi'-on-unity-and-A1-0-A1}  
  shows that $\Phi(\DL_{(\DA^1_\kk,0)})= \DL_{(\DA^1_\kk,0)}$
  where we view $\DL_{(\DA^1_\kk,0)}$ in the obvious way as an element of
  $K_0(\Var^{\hat{\upmu}}_{\DA^1_\kk})$. 
  Therefore, $\Phi$ factors as the composition
  \begin{equation*}
    \Phi \colon  (K_0(\Var_{\DA^1_\kk}), \star) 
    \ra 
    (\tilde{\mathcal{M}}_{\DA^1_\kk},\star) 
    \ra
    (\tilde{\mathcal{M}}_{\DA^1_\kk}^{\hat{\upmu}},\star). 
  \end{equation*}
  The second map is a morphism of $\mathcal{M}_\kk$-algebras.
  It is, up to the isomorphism
  $\mathcal{M}_{\DA^1_\kk}
  \sira
  \tilde{\mathcal{M}}_{\DA^1_\kk}$
  from 
  \eqref{eq:71} (for $n=1$), the morphism
  $\Phi'$ from 
  Theorem~\ref{t:total-vanishing-cycles-group-homo}.
  This makes up for
  Remark~\ref{rem:Phi'-cannot-be-ring-morphism}.
  As observed in Remarks~\ref{rem:Phi'-on-smooth-proper-morphisms}
  and~\ref{rem:Phi'-of-L-A1}, 
  $\Phi$ vanishes on many other elements, for example on
  $[\DA^1_\kk \xra{\id} \DA^1_\kk]$ or on
  $\DL_{\DA^1_\kk}$.
\end{remark}

\subsection{Compactification}
\label{sec:compactification}

For the convenience of the reader we recall our compactification
result from \cite{valery-olaf-matfak-motmeas}.

\begin{proposition}
  [{\cite[Prop.~6.1]{valery-olaf-matfak-motmeas}}]
  \label{p:compactification}
  Let $\kk$ be an algebraically closed field of characteristic zero.
  Let $X$ and $Y$ be smooth 
  $\kk$-varieties and let
  $V \colon  X \ra \DA^1_\kk$ and $W \colon Y\ra \DA^1_\kk$ be 
  projective
  morphisms (hence $X$ and $Y$ are
  quasi-projective $\kk$-varieties).
  Consider the convolution
  \begin{equation*}
    V \circledast W \colon X\times Y\xra{V \times W} \DA^1_\kk\times
    \DA^1_\kk\xra{+} \DA^1_\kk. 
  \end{equation*}
  Then there exists a smooth quasi-projective $\kk$-variety $Z$ with
  an open embedding $X\times Y \hra Z$ and a
  projective morphism $h \colon Z\ra \DA^1_\kk$ such that the
  following conditions are satisfied.
  \begin{enumerate}[label=(\roman*)]
  \item
    \label{enum:compactification-diagram}
    The diagram
    \begin{equation*}
      \xymatrix{
        {X\times Y} 
        \ar@{^{(}->}[r]
        \ar[d]^-{V \circledast W} & {Z} \ar[d]^-h\\
        {\DA^1_\kk} \ar@{}[r]|-{=} & {\DA^1_\kk}
      }
    \end{equation*}
    commutes.
  \item
    \label{enum:compactification-critical-points}
    All critical points of $h$ are contained
    in $X\times Y,$ i.\,e.\ $\Sing(V \circledast W)=\Sing(h)$.
  \item
    \label{enum:compactification-boundary-snc}
    We have $Z \setminus X \times Y =  \bigcup_{i=1}^s D_i$ where
    the
    $D_i$ are pairwise distinct smooth
    prime divisors.
    More precisely, $Z \setminus X \times Y$ is the support
    of a simple normal crossing divisor.
  \item
    \label{enum:compactification-intersections-proper-and-smooth}
    For every $p$-tuple $(i_1, \dots, i_p)$ of indices
    (with $p \geq 1$) the morphism
    \begin{equation*}
      h_{i_1 \dots i_p} \colon  D_{i_1 \dots i_p}:=D_{i_1} \cap \dots \cap
      D_{i_p} \ra \DA^1_\kk   
    \end{equation*}
    induced by $h$ is projective and smooth. In
    particular, all
    $D_{i_1 \dots i_p}$ are smooth quasi-projective $\kk$-varieties.
  \end{enumerate}
\end{proposition}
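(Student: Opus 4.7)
The plan is to compactify $X$ and $Y$ separately, resolve the indeterminacy that arises when extending the addition $+\colon \DA^1_\kk\times\DA^1_\kk \to \DA^1_\kk$ to a product of projective compactifications, and then modify along the boundary so that every stratum becomes smooth over $\DA^1_\kk$. First, since $V$ is projective, Nagata compactification followed by Hironaka's resolution produces a smooth projective $\kk$-variety $\bar{X}$ containing $X$ as an open subscheme, with $\bar{X}\setminus X$ SNC, together with an extension $\bar{V}\colon\bar{X}\to\DP^1_\kk$ of $V$ satisfying $\bar{V}^{-1}(\DA^1_\kk)=X$; in particular $\bar{X}\setminus X = \bar{V}^{-1}(\infty)$. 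The analogous construction for $W$ yields $\bar{W}\colon\bar{Y}\to\DP^1_\kk$.

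The addition map extends to a rational map $\DP^1_\kk\times\DP^1_\kk \dashrightarrow \DP^1_\kk$ whose indeterminacy is concentrated at $(\infty,\infty)$; a finite sequence of blowups at this locus yields a smooth projective surface $\tilde{P}$ with a morphism $\sigma\colon\tilde{P}\to\DP^1_\kk$ extending addition and such that $\sigma^{-1}(\DA^1_\kk)\to\DA^1_\kk$ is a smooth projective morphism with $\DP^1_\kk$-fibers. Form the Zariski closure $\bar{Z}^{(0)}$ of the graph of $V\circledast W$ in $\bar{X}\times\bar{Y}\times\DP^1_\kk$; it is a projective $\kk$-variety carrying a projective morphism $\bar{h}^{(0)}\colon\bar{Z}^{(0)}\to\DP^1_\kk$ extending $V\circledast W$ and a birational morphism $\bar{Z}^{(0)}\to\bar{X}\times\bar{Y}$ that restricts to the identity on $X\times Y$. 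Apply Hironaka's strong resolution of singularities together with principalization of the ideal defining the complement of $X\times Y$ to obtain a smooth projective $\bar{Z}$ with a birational projective modification $\bar{Z}\to\bar{Z}^{(0)}$ which is an isomorphism over $X\times Y$, and for which $\bar{Z}\setminus X\times Y$ is supported on an SNC divisor. Setting $Z := \bar{h}^{-1}(\DA^1_\kk)$ and $h := \bar{h}|_Z$ gives a smooth quasi-projective $Z$ with a projective $h$, establishing (i) and (iii) with components $D_1,\dots,D_s$ the SNC pieces.

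Observe that (iv) already implies (ii): if $h|_{D_i}$ is smooth at $p\in D_i$, then $dh$ is surjective on $T_p D_i \subseteq T_p Z$, hence on $T_p Z$, so $p\notin\Sing(h)$; applied to all $D_i$ this forces $\Sing(h)\subseteq X\times Y$, and combined with $h|_{X\times Y}=V\circledast W$ gives $\Sing(h)=\Sing(V\circledast W)$. To enforce (iv) we iterate: whenever some stratum $D_{i_1\cdots i_p}$ fails to be smooth over $\DA^1_\kk$, blow up a suitable smooth centre inside the non-smooth locus (which is automatically contained in the boundary, hence disjoint from $X\times Y$) and re-run principalization; in characteristic zero one can attach to the boundary stratification a combinatorial invariant of toric type that strictly decreases under such modifications, guaranteeing termination in finitely many steps. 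The main obstacle is precisely this last iteration: arranging that every intersection of boundary divisors is smooth over $\DA^1_\kk$ is essentially a semi-stable reduction statement along the boundary, and requires a delicate combinatorial argument to ensure termination while preserving the properties (i)–(iii) achieved in the earlier steps; the rest of the construction is a standard application of Nagata, Hironaka, and the universal property of the graph closure.
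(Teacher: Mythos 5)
Your setup — compactifying $V$ and $W$ separately to $\bar V\colon\bar X\to\DP^1_\kk$, $\bar W\colon\bar Y\to\DP^1_\kk$ with $\bar V^{-1}(\DA^1_\kk)=X$ and SNC boundary, taking the graph closure of $V\circledast W$ in $\bar X\times\bar Y\times\DP^1_\kk$, resolving, and restricting over $\DA^1_\kk$ — is sound and does deliver (i) and (iii), and your observation that (iv) implies (ii) by surjectivity of $dh$ on the subspace $T_pD_i\subseteq T_pZ$ is correct. (A small inconsistency: the auxiliary surface $\tilde P\to\DP^1_\kk$ you construct by blowing up $(\infty,\infty)$ plays no role afterwards; the graph closure you actually use sits in $\bar X\times\bar Y\times\DP^1_\kk$, not in anything built from $\tilde P$.)

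The genuine gap is (iv), which is the real content of the proposition, and you essentially say so yourself. The paragraph ``blow up a suitable smooth centre inside the non-smooth locus \dots\ re-run principalization \dots\ a combinatorial invariant of toric type that strictly decreases'' is not an argument: no invariant is named, no reason is given why blowing up (which typically increases the number of boundary components and creates new strata) should make it decrease, and no reason is given why the loop preserves (i)--(iii) rather than, say, re-introducing singularities of $Z$ or enlarging $Z\setminus(X\times Y)$ beyond an SNC configuration. Making every stratum $D_{i_1\dots i_p}\to\DA^1_\kk$ smooth is a semistable-reduction--type statement over a one-dimensional base that is not achievable by generic ``resolve and hope'' moves; one needs to exploit the specific toroidal structure of $\bar V\circledast\bar W$ near the corner $(\bar X\setminus X)\times(\bar Y\setminus Y)$ (where locally $\bar V=u/\prod x_i^{a_i}$, $\bar W=v/\prod y_j^{b_j}$ in SNC coordinates) and carry out an explicit combinatorial resolution of the resulting toric fan so that the output has simplicial/smooth cones compatible with the projection to $\DA^1_\kk$. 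Until that step is carried out concretely, the proof is incomplete precisely at the point where all the work lies.
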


\section{Comparison with the matrix factorization motivic
  measure} 
\label{sec:comp-with-matr}

We would like to place
Theorem~\ref{t:total-motivic-vanishing-cycles-ring-homo} 
in a certain context and compare the motivic measure $\Phi$ 
or rather $\epsilon_! \circ \Phi$ with
another motivic measure of a different nature. 

\subsection{Categorical motivic measures}
\label{sec:categorical-measures}

First let us recall the {\it categorical} measure
\begin{equation*}
  \nu \colon K_0(\Var_\kk)\ra K_0(\sat^\DZ_\kk)  
\end{equation*}
constructed in \cite{bondal-larsen-lunts-grothendieck-ring}.
Here $K_0(\sat^\DZ_\kk)$ is the free abelian group generated by
quasi-equivalence classes of saturated differential
$\DZ$-graded 
$\kk$-categories with relations coming from semiorthogonal
decompositions into admissible subcategories on the level of
homotopy categories. The map $\nu $ sends the class $[X]$ of a
smooth projective $\kk$-variety $X$ to the class
$[\D^\bd(\Coh(X))]$ 
of (a suitable
differential $\DZ$-graded $\kk$-enhancement of) its derived
category
$\D^\bd(\Coh(X))$. The
tensor product of differential $\DZ$-graded $\kk$-categories induces a ring structure on
$K_0(\sat^\DZ_\kk)$ and $\nu $ is a ring homomorphism.  In recent
papers
\cite{valery-olaf-matfak-semi-orth-decomp,
  valery-olaf-matfak-motmeas} we have constructed a motivic
measure
\begin{equation*}
  \mu \colon 
  (K_0(\Var_{\DA^1_\kk}), \star)  
  \ra K_0(\sat_\kk^{{\DZ}_2})  
\end{equation*}
which is a relative analogue of the measure $\nu $. Here
$K_0(\sat_\kk^{{\DZ}_2})$ is defined in exactly the same way as
$K_0(\sat^\DZ_\kk)$ except that this time we consider saturated
differential ${{\DZ}_2}$-graded $\kk$-categories. If
$[X \xra{W} {\DA}^1_\kk]\in K_0(\Var_{{\DA}^1_\kk})$ where $X$ is
smooth over $\kk$ and $W$ is proper, then $\mu([X\xra{W}{\DA}^1_\kk])$
is defined as the class 
$[\bfMF(W)]$
of (a suitable differential $\DZ_2$-graded $\kk$-enhancement of)
the category
\begin{equation*}
  \bfMF(W):=\prod_{a\in k}\bfMF(X,W-a)^\natural  
\end{equation*}
Here $\bfMF(X,W-a)^\natural$ is the Karoubi envelope of the
category $\bfMF(X,W-a)$ of matrix factorizations of the potential
$W-a$.

The measures $\nu$ and $\mu$ are related by the commutative
diagram of ring homomorphisms 
(as announced in the introduction of \cite{valery-olaf-matfak-semi-orth-decomp})
\begin{equation}
  \label{eq:folding-diagram}
  \xymatrix{
    {K_0(\Var_\kk)} \ar[r]^-{\nu} \ar[d] 
    \ar[d]
    & {K_0(\sat_\kk^{\DZ})} \ar[d]\\
    {(K_0(\Var_{\DA^1_\kk}), \star)} \ar[r]^-{\mu} 
    & {K_0(\sat_\kk^{\DZ_2})}
  }
\end{equation}
where $K_0(\Var_\kk) \ra  K_0(\Var_{{\DA}^1_\kk})$ is the ring
homomorphism \eqref{eq:46} (for $n=1$)
and
$K_0(\sat^\DZ_\kk)  \ra  K_0(\sat_\kk^{{\DZ}_2})$ is induced from
the {\it folding} 
(see \cite{olaf-folding-derived-categories-in-prep})
which assigns to a differential ${\DZ}$-graded $\kk$-category the
corresponding differential ${{\DZ}}_2$-graded $\kk$-category. 

\subsection{Comparing vanishing cycles and matrix factorization
  measures}
\label{sec:comp-vanish-cycl}



To each saturated differential ${{\DZ}}_2$-graded $\kk$-category $A$
one assigns the finite dimensional ${{\DZ}}_2$-graded vector
space
\begin{equation*}
  \HP(A)=\HP_0(A)\oplus \HP_1(A)  
\end{equation*}
over the Laurent power series field $\kk((u))$ - the periodic
cyclic homology of $A$ (see \cite{keller-invariance}).

Put
$\chi_{\HP}(A):=\dim_{\kk((u))}\HP_0(A)-\dim_{\kk((u))}\HP_1(A)$. Since
$\HP$ is additive on semiorthogonal decompositions of
triangulated categories (see
\cite{keller-cyclic-homology-exact-cat}) 
this
assignment descends to a group homomorphism
\begin{equation*}
  \chi_{\HP}\colon K_0(\sat_{k}^{{\DZ}_2})\ra {\DZ}  
\end{equation*}
Because of the K\"unneth property for $\HP$ 
(see \cite{shklyarov-hodge-theoretic-property-kuenneth} and references therein) the map $\chi_{\HP}$ is in fact a ring homomorphism.

On the other hand, if $\kk={\DC}$ we have the usual ring
homomorphism (see \cite{looijenga-motivic-measures})
\begin{equation}
  \label{eq:chi_c}
  \chi_\opc:=
  \sum (-1)^i\dim \opH_\opc^i \ \colon \mathcal{M}_{\DC}\ra {\DZ}
\end{equation}
Notice that $\chi_\opc({\DL})=1$, hence $\chi_\opc$ is indeed well-defined as a homomorphism from
$\mathcal{M}_{\DC}$.

Forgetting the action of $\hat{\upmu}$ obviously defines a 
map
\begin{equation}
  \label{eq:14}
  \mathcal{M}_\DC^{\hat{\upmu}} \ra \mathcal{M}_\DC
\end{equation}
of $\mathcal{M}_\DC$-modules.
Clearly, this map is a ring
homomorphism if we equip its source with the usual
multiplication.
However, this is not true if we equip its source with the 
convolution product $*$ as we will explain in
Lemma~\ref{l:forgetful-convolution} below.
Nevertheless we have the following result.

\begin{proposition}
  \label{p:chi_c-covolution}
  The composition of $\chi_\opc$ (see \eqref{eq:chi_c}) with the map ``forget the
  \mbox{$\hat{\upmu}$-action}''
  \eqref{eq:14}
  defines a ring homomorphism
  \begin{equation}
    \label{eq:72}
    \chi_\opc \colon (\mathcal{M}_\DC^{\hat{\upmu}},*) \ra \DZ
  \end{equation}
  which we denote again by $\chi_\opc$.
\end{proposition}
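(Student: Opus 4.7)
The plan is to verify multiplicativity directly on generators, by unpacking the definition of the convolution product $*$ and computing Euler characteristics of the resulting Fermat-type varieties. Since both $\chi_\opc$ and the forgetful map are additive, and $K_0(\Var^{\hat{\upmu}}_\kk) = \colim_n K_0(\Var^{\upmu_n}_\kk)$, it will suffice to check that
\begin{equation*}
  \chi_\opc([A] * [B]) = \chi_\opc(A) \cdot \chi_\opc(B)
\end{equation*}
for classes of $\DC$-varieties $A$, $B$ with a good $\upmu_n$-action, for each $n \geq 1$, together with the identity axiom $\chi_\opc([\Spec \DC]) = 1$ (which is trivial).

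The key computation uses the explicit formula~\eqref{eq:convolution-explicit}, which writes $[A]*[B]$ as a signed combination of the two pieces
\begin{equation*}
  Q_\delta^{(n)}(A,B) := \bigl(A \times B \times^{\upmu_n \times \upmu_n} \underset{x}{\DGm} \times \underset{y}{\DGm}\bigr)\big|_{x^n+y^n=\delta},\qquad \delta \in \{0,1\}.
\end{equation*}
The $\upmu_n \times \upmu_n$-action is free on the $\DGm \times \DGm$-factor and hence on the total space, so the quotient is a finite étale cover of degree $n^2$. Over $\DC$ this is a topological covering, and finite covering maps multiply $\chi_\opc$ by their degree; combining this with multiplicativity of $\chi_\opc$ on products yields
\begin{equation*}
  \chi_\opc(Q_\delta^{(n)}(A,B)) = \frac{1}{n^2}\,\chi_\opc(A)\,\chi_\opc(B)\,\chi_\opc\bigl((\DGm \times \DGm)|_{x^n+y^n=\delta}\bigr).
\end{equation*}

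The main calculation is therefore the Euler characteristics of the two Fermat-type pieces $F^{(n)}_\delta := (\DGm \times \DGm)|_{x^n+y^n=\delta}$. For $\delta = 0$ the equation factors as $y = \zeta x$ over the $n$ $n$-th roots $\zeta$ of $-1$, so $F^{(n)}_0$ is a disjoint union of $n$ copies of $\DGm$ and $\chi_\opc(F^{(n)}_0) = 0$. For $\delta = 1$, I would start from the smooth projective Fermat curve of degree $n$ (genus $(n-1)(n-2)/2$, so Euler characteristic $3n - n^2$), remove the $n$ points at infinity to get $\chi = 2n - n^2$ on the affine Fermat, and then remove the $n$ points where $x = 0$ and the $n$ points where $y = 0$ to obtain $\chi_\opc(F^{(n)}_1) = -n^2$. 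Plugging these into the formula above gives
\begin{equation*}
  \chi_\opc([A]*[B]) = -\frac{-n^2}{n^2}\chi_\opc(A)\chi_\opc(B) + \frac{0}{n^2}\chi_\opc(A)\chi_\opc(B) = \chi_\opc(A)\chi_\opc(B),
\end{equation*}
which is independent of $n$, so compatible with the colimit defining $\hat{\upmu}$.

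The only real obstacle is the genus/Euler-characteristic count for $F^{(n)}_1$, which is standard but must be done carefully (keeping track of points at infinity and on the coordinate axes). Once this is in place, everything else is formal: the identity $[\Spec \DC]$ clearly maps to $1$, the stated equality extends from generators to all of $K_0(\Var^{\upmu_n}_\DC)$ by additivity, and the compatibility with the transition maps $K_0(\Var^{\upmu_n}_\DC) \to K_0(\Var^{\upmu_{n'}}_\DC)$ follows because the formula is $n$-independent, giving a ring homomorphism on the colimit $\mathcal{M}_\DC^{\hat{\upmu}}$.
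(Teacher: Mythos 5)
Your proof is correct, and it takes a genuinely different route from the paper's. You work upstairs: you observe that the $\upmu_n\times\upmu_n$-action on $A\times B\times\DGm\times\DGm$ is free, so the total space is a degree-$n^2$ topological covering of the balanced product, and you then compute $\chi_\opc$ of the two pieces of the Fermat-type curve $(\DGm\times\DGm)|_{x^n+y^n=\delta}$ directly — via the genus formula for the smooth projective Fermat curve of degree $n$ for $\delta=1$, and a factorization into lines for $\delta=0$. The paper instead works downstairs: it uses that $\DGm\xra{x\mapsto x^n}\DGm$ is a $\upmu_n$-torsor to rewrite $(A\times^{\upmu_n}\DGm)\times(B\times^{\upmu_n}\DGm)$ as a locally trivial fibration over $\underset{x'}{\DGm}\times\underset{y'}{\DGm}$ (coordinates $x'=x^n$, $y'=y^n$) with fiber $A^\an\times B^\an$, whereupon the conditions $x^n+y^n=0$ and $x^n+y^n=1$ become the $n$-independent conditions $x'+y'=0$ and $x'+y'=1$, i.e.\ $\DC^\times$ and $\DC^\times-\{1\}$, with $\chi_\opc$ equal to $0$ and $-1$. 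The paper's change of coordinates buys it independence of $n$ for free and avoids any genus computation; your approach pays for working at the wrong level by having to compute $\chi_\opc((\DGm\times\DGm)|_{x^n+y^n=1})=-n^2$ carefully (points at infinity plus coordinate-axis points), but the cancellation of $n^2$ against the covering degree recovers the $n$-independence in the end. Both your covering-degree step and the paper's fibration step rely on the same underlying fact — multiplicativity of $\chi_\opc$ along locally trivial fibrations in the analytic topology — so the proofs are cousins rather than strangers. For completeness you might also note explicitly that the action is free because it is free on the $\DGm\times\DGm$ factor, and that the passage from $K_0$ to $\mathcal{M}_\DC^{\upmu_n}$ is harmless since $\chi_\opc(\DL)=1$.
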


\begin{proof}
  Let $A$ and $B$ be complex varieties
  with
  a good $\upmu_n$-action for some $n \geq 1$. We need to show
  that $A \times B$ and 
  \begin{equation*}
    [A]*[B] = 
    [((A \times^{\upmu_n} \underset{x}{\DGm})
    \times 
    (B \times^{\upmu_n} \underset{y}{\DGm}))|_{x^n+y^n=0}]
    -[((A \times^{\upmu_n} \underset{x}{\DGm})
    \times 
    (B \times^{\upmu_n} \underset{y}{\DGm}))|_{x^n+y^n=1}]
  \end{equation*}
  (see
  \eqref{eq:convolution-explicit})
  have the same Euler characteristic
  with compact support.
  Since $\DGm \ra \DGm$, $x \mapsto x^n$, is a $\upmu_n$-torsor
  (in the \'etale topology) we have a pullback square
  \begin{equation*}
    \xymatrix{
      {A \times \DGm}  
      \ar[rr]^-{(a,x) \mapsto x}
      \ar[d]
      &&
      {\DGm} 
      \ar[d]^-{x \mapsto x^n}
      \\
      {A \times^{\upmu_n} \DGm}  
      \ar[rr]^-{[a,x] \mapsto x^n}
      &&
      {\DGm.}
    }
  \end{equation*}
  Its lower horizontal morphism is an \'etale-locally
  trivial fibration with fiber $A$. Therefore it is a locally
  trivial fibration if we pass to the analytic topologies.
  In this way we obtain a locally trivial fibration
  \begin{equation*}
    f \colon (A^\an \times^{\upmu_n(\DC)} \DC^\times)
    \times 
    (B^\an \times^{\upmu_n(\DC)} \DC^\times)
    \xra{([a,x],[b,y]) \mapsto (x^n,y^n)} \DC^\times \times \DC^\times
  \end{equation*}
  with fiber $A^\an \times B^\an$. 
  Consider the subsets $N:=\{x'+y'=0\} \cong \DC^\times$ and 
  $E:=\{x'+y'=1\} \cong \DC^\times -\{1\}$ of the base
  of this
  fibration where
  $x'$ and $y'$ are the obvious coordinates.
  Then
  \begin{multline*}
    \chi_\opc([A]*[B]) =
    \chi_\opc(f^{-1}(N))-\chi_\opc(f^{-1}(E))\\
    =
    \chi_\opc(A^\an \times B^\an)
    (\chi_\opc(N) - \chi_\opc(E))
    =\chi_\opc(A^\an \times B^\an).
  \end{multline*}
  This proves what we need.
\end{proof}

Although not strictly needed for our purposes we would like to
include the following result (which is also true for $\kk$
instead of $\DC$).

\begin{lemma}
  \label{l:forgetful-convolution}
  The map ``forget the \mbox{$\hat{\upmu}$-action}''
  $f \colon \mathcal{M}_\DC^{\hat{\upmu}} \ra \mathcal{M}_\DC$
  (see
  \eqref{eq:14}) does not define a
  ring homomorphisms 
  $(\mathcal{M}_\DC^{\hat{\upmu}},*) \ra \mathcal{M}_\DC$.
  %
  %
\end{lemma}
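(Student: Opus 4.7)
The plan is to exhibit a concrete counterexample. Take $n = 2$ and let $A = B$ be the variety $\upmu_2$ viewed as a $\DC$-variety over $\Spec \DC$ with its natural right-translation $\upmu_2$-action on itself; their classes live in $\mathcal{M}_\DC^{\upmu_2}$ and hence in $\mathcal{M}_\DC^{\hat{\upmu}}$. Then $f([A]) = f([B]) = 2$, so $f([A]) \cdot f([B]) = 4$ in $\mathcal{M}_\DC$, and the task is to compute $f([A] * [B])$ and show it differs from $4$.

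The first step is to trivialize the balanced product. Since $\upmu_2 \times \upmu_2$ acts simply transitively on $A \times B = \upmu_2 \times \upmu_2$, the assignment $(a, b, x, y) \mapsto (ax, by)$ descends to a $\DC$-variety isomorphism
\[
  A \times B \times^{\upmu_2 \times \upmu_2} \DGm \times \DGm \sira \DGm \times \DGm.
\]
Because $a^2 = b^2 = 1$ for $a, b \in \upmu_2$, the function $x^2 + y^2$ on the source transports to $x'^2 + y'^2$ on $\DGm \times \DGm$. Combining with the explicit convolution formula \eqref{eq:convolution-explicit} yields
\[
  f([A] * [B]) = [(\DGm \times \DGm)|_{x^2+y^2=0}] - [(\DGm \times \DGm)|_{x^2+y^2=1}] \in \mathcal{M}_\DC.
\]

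Next I would compute each class. The locus $x^2 + y^2 = 0$ in $\DGm \times \DGm$ factors as $(x - iy)(x + iy) = 0$, yielding two disjoint copies of $\DGm$ and contributing $2(\DL - 1)$. The smooth affine conic $\{x^2 + y^2 = 1\}$ is isomorphic to $\DGm$ over $\DC$ via the parametrization $u := x + iy$ (so that $x = (u + u^{-1})/2$ and $y = (u - u^{-1})/(2i)$); restricting to $x, y \neq 0$ removes the four points $u \in \{\pm 1, \pm i\}$, contributing class $\DL - 5$. Altogether
\[
  f([A] * [B]) = 2(\DL - 1) - (\DL - 5) = \DL + 3.
\]

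The main subtlety is to separate $\DL + 3$ from $4$ in $\mathcal{M}_\DC$. Ordinary Euler characteristic with compact support sends $\DL \mapsto 1$ and hence does \emph{not} detect a difference; this is consistent with Proposition~\ref{p:chi_c-covolution}, which shows that $\chi_\opc \circ f$ \emph{is} a ring homomorphism. I would therefore invoke a finer invariant, namely the Hodge-Deligne (or $E$-) polynomial, which is a well-defined ring homomorphism $E \colon \mathcal{M}_\DC \ra \DZ[u, v, (uv)^{-1}]$ satisfying $E(\DL) = uv$. Since $E(\DL + 3) = uv + 3 \neq 4 = E(4)$, the classes $\DL + 3$ and $4$ are distinct in $\mathcal{M}_\DC$, so $f([A] * [B]) \neq f([A]) \cdot f([B])$ and $f$ is not a ring homomorphism.
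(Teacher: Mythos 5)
Your proof is correct and takes essentially the same approach as the paper: both use $[\upmu_2]$ as the counterexample, both trivialize the balanced product explicitly, both arrive at $f([\upmu_2]*[\upmu_2]) = \DL + 3 = [\DGm] + 4$ (the paper keeps the $\upmu_2$-action a bit longer and changes coordinates to $a=x+iy$, $b=x-iy$ rather than factoring/parametrizing directly, but the computations are interchangeable), and both conclude via the Hodge--Deligne polynomial.
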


\begin{proof}
  Let $M=\upmu_2 \in \Var^{\upmu_2}_\DC$ with obvious action of
  $\upmu_2$.
  We claim that $f([M] * [M]) \not= f([M])f([M])$.

  We clearly have $f([M])f([M])=4 [\Spec \DC]=4$.
  On the other hand multiplication defines an isomorphism
  $M \times^{\upmu_2} \DGm \sira \DGm$ and therefore
  \eqref{eq:convolution-explicit} yields
  \begin{equation*}
    [M] * [M] = 
    [(\underset{x}{\DGm} \times 
      \underset{y}{\DGm})|_{x^2+y^2=0}]
      -[(\underset{x}{\DGm} \times 
      \underset{y}{\DGm})|_{x^2+y^2=1}]
  \end{equation*}
  The $\upmu_2$-action on $\DGm \times \DGm$ is the diagonal action.
  Instead of using the coordinates $(x,y)$ on $\DA^2_\DC$ let us
  use the coordinates $(a,b)$ where $a=x+iy$ and $b=x-iy$.
  Then $x^2+y^2=ab$ and the conditions $x \not= 0$ and $y\not=0$
  are equivalent to $a+b \not=0$ and $a-b \not=0$. 
  Hence
  \begin{equation*}
    [M] * [M] 
    = 
    [\underset{(a,b)}{\DA^2_\DC}|_{ab=0,\; a \not=\pm b}]
    -[\underset{(a,b)}{\DA^2_\DC}|_{ab=1,\; a \not=\pm b}]
  \end{equation*}
  The $\upmu_2$-action on $\DA^2_\DC$ is again the diagonal
  action.
  The first summand is the coordinate cross without the origin
  and equal to $2[\DGm]$ with obvious $\upmu_2$-action. 
  To treat the second summand note that the map $(\DGm - \upmu_4)
  \ra  
  {\DA^2_\DC}|_{ab=1, a \not=\pm b}$, $a \mapsto (a, a^{-1})$
  defines a $\upmu_2$-equivariant isomorphism.
  Hence
  \begin{equation}
    \label{eq:mu2-convolution-squared}
    [M] * [M] = 2[\DGm]-[\DGm]+[\upmu_4]=[\DGm]+2[\upmu_2]
  \end{equation}
  and $f([M] * [M])=[\DGm]+4$.

  But the element $[\DGm]=f([M] * [M])-f([M])f([M])$ is certainly
  not zero in $\mathcal{M}_\DC$: taking the  
  Hodge-Deligne polynomial defines a ring homomorphism
  $\mathcal{M}_\DC \ra \DZ[u,v,u^{-1},v^{-1}]$ which sends
  $[\DGm]$ to $uv-1$, cf.\ \cite[Example
  4.6]{nicaise-sebag-grothendieck-ring-of-varieties}. 
\end{proof}

\begin{theorem} 
  \label{t:comparison}
  We have the following commutative diagram of ring homomorphisms
  \begin{equation*}
    \xymatrix{
      {(K_0(\Var_{\DA^1_\DC}), \star)} 
      \ar[r]^-{\mu} \ar[d]_-{\epsilon_!\circ \Phi}
      & {K_0(\sat_{\DC}^{{\DZ}_2})} \ar[d]^-{\chi_{\HP}} \\
      {(\mathcal{M}_{\DC}^{\hat{\upmu}},*)} \ar[r]^-{\chi_\opc} 
      & {\DZ}
    }
  \end{equation*}
  where the left vertical arrow is 
  the map 
  \eqref{eq:epsilon-Phi}
  from Theorem~\ref{t:total-motivic-vanishing-cycles-ring-homo}
  and the lower horizontal map is 
  the ring homomorphism \eqref{eq:72}.
\end{theorem}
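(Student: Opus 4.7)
The plan is to verify the commutativity on a convenient set of generators, where it reduces to a per-point comparison that follows from two results cited in the introduction: Efimov's computation of the periodic cyclic homology of categories of matrix factorisations, and Guibert--Loeser--Merle's comparison between motivic and geometric vanishing cycles.

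First I would observe that both composites $\chi_\opc \circ \epsilon_! \circ \Phi$ and $\chi_{\HP} \circ \mu$ are in particular morphisms of abelian groups out of $K_0(\Var_{\DA^1_\DC})$. By Bittner's presentation \cite[Thm.~5.1]{bittner-euler-characteristic}, this group is generated by classes $[X \xra{V} \DA^1_\DC]$ with $X$ a smooth connected quasi-projective $\DC$-variety and $V$ projective, so it suffices to verify the equality on such a class. Evaluating the upper-right composite yields
\begin{equation*}
\chi_{\HP}(\mu([X \xra{V} \DA^1_\DC])) = \sum_{a \in \DC} \chi_{\HP}(\bfMF(X, V-a)^\natural),
\end{equation*}
which is a finite sum because $\bfMF(X, V-a)^\natural$ is quasi-equivalent to zero whenever $X_a$ is smooth. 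The lower-left composite evaluates, by Theorem~\ref{t:total-vanishing-cycles-group-homo} and the definition~\eqref{eq:phi-V-k}, to
\begin{equation*}
\chi_\opc(\epsilon_! \Phi([X \xra{V} \DA^1_\DC])) = \chi_\opc(\ul{\phi_V}) = \sum_{a \in \Crit(V)} \chi_\opc\bigl((\epsilon_a)_!(\phi_{V,a})\bigr).
\end{equation*}
Thus the theorem is reduced to the pointwise equality $\chi_{\HP}(\bfMF(X, V-a)^\natural) = \chi_\opc((\epsilon_a)_!(\phi_{V,a}))$ for each $a \in \DC$.

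The central step is to identify both sides with the Euler characteristic with compact support of the sheaf of geometric vanishing cycles of $V-a$ on $|\Sing(V) \cap X_a|$. For the right-hand side I would use Remark~\ref{rem:motivic-fibers-signs} to write $\phi_{V,a} = (-1)^{\dim X}\, \mathcal{S}^\phi_{V-a}$ and then appeal to the Guibert--Loeser--Merle comparison \cite{guibert-loeser-merle-convolution} between the motivic and the topological vanishing cycles: after forgetting the $\hat{\upmu}$-action, the image of $\mathcal{S}^\phi_{V-a}$ under $\chi_\opc$ agrees with the Euler characteristic with compact support of the constructible complex $\phi_{V-a}(\DQ_X)$. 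For the left-hand side I would invoke Efimov's theorem \cite{efimov-cyclic-homology-matrix-factorizations}, which identifies $\HP_*(\bfMF(X, V-a))$ with the $\DZ_2$-folded hypercohomology of $\phi_{V-a}(\DQ_X)$ up to a dimensional shift and sign; passing to the $\DZ_2$-graded Euler characteristic $\chi_{\HP}$ translates this into the same topological invariant.

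The main obstacle I anticipate is the bookkeeping of signs and shifts. Efimov's identification carries a degree/parity convention involving $\dim X$, and our normalisation of $\phi_{V,a}$ contributes the factor $(-1)^{\dim X}$; the heart of the verification is to check that these two contributions cancel so that the pointwise equality above holds without any residual sign. Compatibility of the forgetful map $\mathcal{M}_\DC^{\hat{\upmu}} \ra \mathcal{M}_\DC$ with both $\chi_\opc$ (established in Proposition~\ref{p:chi_c-covolution}) and the $\DZ_2$-folding on the matrix-factorization side must also be checked, but is straightforward once the sign conventions have been fixed. Summing the pointwise equalities over $a \in \Crit(V)$ then yields the theorem on Bittner's generators, and hence on all of $K_0(\Var_{\DA^1_\DC})$.
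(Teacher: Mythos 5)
Your strategy is essentially the paper's: reduce to Bittner generators, reduce to a pointwise identity $\chi_{\HP}(\bfMF(X,V-a)^\natural)=\chi_\opc((\epsilon_a)_!\phi_{V,a})$ at each critical value, and then compare the two sides via Efimov's theorem and a motivic-to-geometric vanishing cycles comparison. A few details differ or need to be made precise. First, the paper never tracks a $(-1)^{\dim X}$ sign: it works with $\phi_{V,a}=[|X_a|]-\psi_{V,a}$ throughout, cites GLM's comparison \emph{for the nearby fibre} (their Prop.~3.17, giving $\opSH_{X_0}(\psi_{W,0})=[\psi^\geom_W\DC_{X^\an}]$ with no sign), and passes to vanishing cycles using the geometric distinguished triangle \eqref{eq:extr}. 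With these choices Efimov's theorem enters with a single overall minus sign on each side and the two simply match; your proposed route via $\mathcal{S}^\phi_{V-a}$ and an explicit $(-1)^{\dim X}$ cancellation against Efimov's shift should also work, but it is strictly more bookkeeping and you would need to confirm that GLM's cited comparison is actually stated for $\mathcal{S}^\phi$ rather than for $\mathcal{S}$. Second, the phrase ``the image of $\mathcal{S}^\phi_{V-a}$ under $\chi_\opc$ agrees with the Euler characteristic of $\phi^\geom_{V-a}\DC_X$'' quietly conflates two steps: the GLM comparison is an equality in a Grothendieck group of constructible complexes over $|X_a|$, and one must then check that the ``constructible sheaf realization'' is compatible with pushforward and reduces to $\chi_\opc$ over a point. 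The paper makes this precise via the homomorphism $\opSH_Y\colon K_0(\Var_Y)\to K_0(\D^\bd_\opc(Y^\an))$ of Lemma~\ref{l:SH}; your proof should make it explicit too. Third, $\mu(W)$ involves the Karoubi closures $\bfMF(X,W-a)^\natural$, while Efimov's theorem concerns $\bfMF(X,W-a)$; the paper invokes the localization theorem in cyclic homology to identify their periodic cyclic homologies, a step your outline omits.
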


\begin{proof} 
  The abelian group $K_0(\Var_{{\DA}^1_{\DC}})$ is
  generated by classes 
  $[X\xra{W} {\DA}^1_{\DC}]$ where $X$ is smooth over ${\DC}$ and
  the map $W$ is projective
  (see \cite{bittner-euler-characteristic}).
  So it suffices to prove commutativity on such generators.

  Fix a projective map $W \colon X\ra {\DA}^1_{\DC}$ of a smooth
  ${\DC}$-variety $X$. Then by definition
  \begin{equation*}
    \mu (W)=\sum_{a\in {\DC}}[\bfMF(X,W-a)^\natural ]
    \in
    K_0(\sat_{{\DC}}^{{\DZ}_2})   
  \end{equation*}
  and
  \begin{equation*}
    \epsilon_!\circ \Phi (W)
    =\sum_{a \in {\DC}}(\epsilon_a)_!\phi_{W,a}\in \mathcal{M}_{{\DC}}   
  \end{equation*}
  with notation as in \eqref{eq:phi-V-k}.
  So it suffices to prove that
  \begin{equation*} 
    \chi_{\HP}(\bfMF(X,W-a)^\natural)=\chi_\opc((\epsilon_a)_!\phi_{W,a})
  \end{equation*}
  for any given $a\in {\DC}$. We may and will assume that $a=0$.

  Let $X^{\an}$ denote the space $X$ with the analytic
  topology. Recall the classical functors of nearby and vanishing
  cycles
  \begin{equation*}
    \psi^{\geom}_W,\phi^{\geom}_W\colon \D^\bd_\opc(X^{\an})\to
    \D^\bd_\opc(X_0^{\an}) 
  \end{equation*}
  between the corresponding derived categories of constructible
  sheaves with complex coefficients.
  For $F\in \D^\bd_\opc(X^{\an})$ we have a distinguished triangle
  \begin{equation}
    \label{eq:extr}
    F\vert_{X_0^{\an}}\ra \psi^{\geom}_WF\ra \phi^{\geom}_WF\to
    F\vert_{X_0^{\an}}[1] 
  \end{equation}
  in  $\D^\bd_\opc(X_0^{\an})$ (see
  \cite[Exp.~XIII]{SGA7-II}).

  In particular for the constant sheaf ${\DC}_{X^{\an}}$ we have
  the complex $\phi^{\geom}_W{\DC}_{X^{\an}}$ of sheaves on
  $X_0^{\an}$. Consider its hypercohomology with compact
  supports
  $\opH_\opc^\bullet (X_0^{\an}, \phi^{\geom}_W{\DC}_{X^{\an}})$
  and its Euler characteristic
  $\sum_i(-1)^i\dim \opH^i_\opc(X_0^{\an}, \phi^{\geom}_W{\DC}_{X^{\an}})$.
  (Note that in our case we may as well consider the
  hypercohomology $\opH^\bullet$ instead of $\opH^\bullet_\opc$,
  since $X_0^\an$ is compact.) It follows from
  \cite[Thm.~1.1]{efimov-cyclic-homology-matrix-factorizations}
  that 
  \begin{equation*}
    \chi_{\HP}(\bfMF(X,W))
    =-\sum_i(-1)^i\dim \opH^i_\opc(X_0^{\an},\phi^{\geom}_W{\DC}_{X^{\an}}). 
  \end{equation*}
  By the localization theorem in cyclic homology it follows that the
  Karoubi closure $\bfMF(X,W)^\natural$ has the same cyclic
  homology as $\bfMF(X,W)$, i.\,e.\
  $\chi_{\HP}(\bfMF(X,W))=\chi_{\HP}(\bfMF(X,W)^\natural)$. 
  Hence it remains to prove the equality
  \begin{equation}
    \label{eq:val4} 
    \chi_\opc((\epsilon_0)_!\phi_{W,0})=-\sum_i(-1)^i\dim
    \opH^i_\opc(X_0^{\an}, \phi^{\geom}_W{\DC}_{X^{\an}}).
  \end{equation}

  \begin{lemma}
    \label{l:SH}
    \begin{enumerate}
    \item 
      \label{enum:SH-exist}
      For every variety $Y$ there
      exists a unique 
      group homomorphism
      \begin{equation*}
        \opSH_Y\colon K_0(\Var_Y)\ra K_0(\D_\opc^\bd(Y^{\an}))  
      \end{equation*}
      such that $\opSH_Y([Z\xra{f}Y])=[{\bR}f_!{\DC}_{Z^{\an}}]$.
    \item 
      \label{enum:SH-push}
      Given a morphism of varieties $g\colon Y\ra T$ the diagram
      \begin{equation*}
        \xymatrix{
          {K_0(\Var_Y)} \ar[r]^-{\opSH_Y} \ar[d]_-{g_!}
          & {K_0(\D_\opc^\bd(Y^{\an}))} \ar[d]^-{K_0({\bR}g_!)}\\
          {K_0(\Var_T)} \ar[r]^-{\opSH_T} 
          & {K_0(\D_\opc^\bd(T^{\an}))}
        }
      \end{equation*}
      commutes.
    \item
      \label{enum:SH-C}
      If $Y=\Spec {\DC}$, then $K_0(\D^\bd_\opc((\Spec
      {\DC})^\an))={\DZ}$ (by taking the alternating sum of the
      cohomologies) and $\opSH_{\Spec {\DC}}([Z\ra \Spec
      {\DC}])=\chi_\opc([Z])$. 
    \end{enumerate}
  \end{lemma}

  \begin{proof} 
    \ref{enum:SH-exist}
    For a variety $S$ and an open embedding $j\colon
    U\hookrightarrow S$ with 
    complementary closed embedding $i\colon
    Z=S-U\hookrightarrow S$ recall the short exact sequence 
    of sheaves
    \begin{equation*}
      0\ra j_!{\DC}_{U^{\an}}\ra {\DC}_{S^{\an}}\ra i_!{\DC}_{Z^{\an}}\ra 0.
    \end{equation*}
    This implies that the map
    $\opSH_Y([Z\xra{f}Y])=[{\bR}f_!{\DC}_{Z^{\an}}]$ indeed
    descends to a homomorphism $\opSH_Y\colon K_0(\Var_Y)\to
    K_0(\D_\opc^\bd(Y^{\an}))$. Uniqueness is obvious. 

    \ref{enum:SH-push}
    Given a morphism $f\colon Z\ra Y$ we have by definition
    \begin{equation*}
      K_0({\bR}g_!)\cdot \opSH_Y([Z\xra{f}Y])=
      K_0({\bR}g_!)[{\bR}f_!{\DC}_{Z^{\an}}]=[{\bR}(gf)_!{\DC}_{Z^{\an}}]  
    \end{equation*}
    and
    \begin{equation*}
      \opSH_T \cdot g_!([Z\xra{f}Y])
      =\opSH_T([Z\xra{gf}T]=[{\bR}(gf)_!{\DC}_{Z^{\an}}]  
    \end{equation*}

    \ref{enum:SH-C}
    This is clear.
  \end{proof}
  
  Now \cite[Prop.~3.17]{guibert-loeser-merle-convolution} implies
  the following equality in 
  $K_0(\D^\bd_\opc(X^{\an}_0))$: 
  \begin{equation*}
    \opSH_{X_0}(\psi_{W,0})=[\psi_W^{\geom}({\DC}_{X^{\an}})].
  \end{equation*}
  Applying part \ref{enum:SH-push}
  of Lemma~\ref{l:SH}
  to the map $\epsilon_0\colon
  X_0\ra {\Spec {\DC}}$ and
  using part \ref{enum:SH-C}
  we conclude that
  \begin{equation*}
    \chi_\opc((\epsilon_0)_!\psi_{W,0})=\sum_i(-1)^i\dim
    \opH^i_\opc(X_0^{\an},\psi_W^{\geom}({\DC}_{X^{\an}})). 
  \end{equation*}
  Notice that on one hand by definition of $\phi_{W,0}$ we have
  \begin{equation*}
    \chi_\opc((\epsilon_0)_!\phi_{W,0})
    =\chi_\opc((\epsilon_0)_![X_0\xra{\id}
    X_0])-\chi_\opc((\epsilon_0)_!\psi_{W,0})    
  \end{equation*}
  and on the other hand by the distinguished triangle
  \eqref{eq:extr} we have 
  \begin{multline*}
    \sum_i(-1)^i\dim \opH^i_\opc(X_0^{\an},\phi_W^{\geom}({\DC}_{X^{\an}}))=
    \sum_i(-1)^i\dim
    \opH^i_\opc(X_0^{\an},\psi_W^{\geom}({\DC}_{X^{\an}}))
    \\
    -\sum_i(-1)^i\dim \opH^i_\opc(X_0^{\an},{\DC}_{X_0^{\an}})  
  \end{multline*}
  It remains to notice that
  \begin{equation*}
    \chi_\opc((\epsilon_0)_![X_0\xra{\id} X_0])=\sum_i(-1)^i\dim
    \opH^i_\opc(X_0^{\an},{\DC}_{X_0^{\an}})   
  \end{equation*}
  This proves equality \eqref{eq:val4} and finishes the proof of the
  theorem. 
\end{proof}

We give two simple examples in which the equality \eqref{eq:val4} can
be verified directly. 

\begin{example} 
  \label{expl:a^n}
  Let $X={\DA}^1_\DC$ and $W(a)=a^n$ for some $n\geq 1$.
  Then $\phi^{\geom}_W{\DC}_{X^{\an}}={\DC}^{\oplus n-1}_{(0)}$. 
  Hence the right-hand side of 
  equation \eqref{eq:val4} is equal to $-(n-1)$.

  On the other hand, in the notation of 
  the proof of
  Proposition~\ref{p:motivic-vanishing-cycles-over-singular-part}
  (with the identity as embedded resolution) 
  the divisor $E$ is $n\cdot (0)$ and
  hence its $\upmu_n$-Galois covering $\tilde{E}$ is
  isomorphic to $\upmu_n$. 
  From \eqref{eq:formula-mot-nearby-fiber}
  we obtain
  \begin{equation*}
    \phi_{W,0}=[|X_0|]-\psi_{W,0}=[(0)]-\upmu_n.  
  \end{equation*}
  Thus $\chi_\opc((\epsilon_0)_!\phi_{W,0})$ is also equal to $-(n-1)$.
\end{example}

\begin{example} 
  \label{expl:ab}
  Let $X={\DA}^2_\DC$ and $W\colon X\ra {\DA}^1_\DC$,
  $W(a,b)=ab$. (This is not proper, but should make no 
  difference since the complex $\phi^{\geom}_W{\DC}_{X^{\an}}$
  has compact support.) 
  Then $\phi^{\geom}_W{\DC}_{X^{\an}}={\DC}_{(0,0)}[-1]$. Hence
  the right-hand side of \eqref{eq:val4} is equal to 1. 

  On the other hand, in the notation of 
  the proof of
  Proposition~\ref{p:motivic-vanishing-cycles-over-singular-part}
  the divisor $E$ is
  the coordinate cross (with components of multiplicity one) 
  and so  \eqref{eq:formula-mot-nearby-fiber}
  yields
  \begin{equation*}
    \phi_{W,0}=[X_0]-\psi_{W,0}=(\DGm +\DGm+pt)-(\DGm+\DGm-\DGm)={\DL}  
  \end{equation*}
  Hence $\chi_\opc((\epsilon_0)_!\phi_{W,0})=1$.

  Here is another way to compute this example.
  Using coordinates $(s,t)$ on $\DA^2_\DC$ so that $a=s+it$ and
  $b=s-it$ we have $W(a,b)=ab=s^2+t^2= s^2 \circledast t^2$. 
  Example~\ref{expl:a^n} shows that
  $\phi_{s^2,0}=[(0)]-\upmu_2$ and
  $\chi_\opc((\epsilon_0)_!\phi_{s^2,0})=
  -1$.
  We have $\epsilon_!\Phi(s^2)=(\epsilon_0)_!\phi_{s^2,0}$
  and
  \begin{equation*}
    \chi_\opc((\epsilon_0)_!\phi_{W,0})=
    \chi_\opc(\epsilon_!\Phi(ab))=
    \chi_\opc(\epsilon_!\Phi(s^2))
    \chi_\opc(\epsilon_!\Phi(t^2))
    =(-1)^2=1
  \end{equation*}
  using multiplicativity of our motivic measures.
  We can also use the motivic Thom-Sebastiani 
  Theorem~\ref{t:thom-sebastiani-GLM-motivic-vanishing-cycles}
  and compute 
  (use Remark~\ref{rem:action-on-B-trivial}
  and (the computation leading to) equation 
  \eqref{eq:mu2-convolution-squared})
  \begin{multline*}
    \phi_{W,0}=
    \Psi(\phi_{s^2,0} \times \phi_{t^2,0})=
    \Psi([(0)] \times [(0)])
    -
    2 \Psi([(0)] \times [\upmu_2])
    + \Psi([\upmu_2] \times [\upmu_2])\\
    = [(0)]-2 [\upmu_2]+([\DGm] +2[\upmu_2])) =\DL.
  \end{multline*}
  Here the $\mu_2$-action on $\DG_m$ is a priori the obvious one
  but can then also be assumed to be trivial by the defining
  relations 
  of the equivariant Grothendieck group.
\end{example}

\subsection{Summarizing diagram}
\label{sec:summarizing-diagram}

We collect the motivic measures considered in this paper in the following
commutative diagram (in case $\kk={\DC}$; see
\eqref{eq:folding-diagram} and Theorem~\ref{t:comparison}).
\begin{equation*}
  \xymatrix{
    {K_0(\Var_\DC)} \ar[r]^-{\nu} \ar[d]
    & {K_0(\sat_\DC^{\DZ})} \ar[d]\\
    {(K_0(\Var_{\DA^1_\DC}), \star)} \ar[r]^-{\mu} \ar[d]_{\epsilon_!
      \circ \Phi}
    & {K_0(\sat_\DC^{\DZ_2})} \ar[d]^-{\chi_\HP}
    \\
    {(\mathcal{M}^{\hat{\upmu}}_\DC, *)} 
    \ar[r]^-{\chi_\opc}
    & {\DZ}
  }
\end{equation*}
The upper left vertical arrow and the vertical composition on the
left are the algebra structure maps. The composition from the top
left corner to the bottom right corner is induced by mapping a
complex variety to its Euler characteristic with compact support.


\def\cprime{$'$} \def\cprime{$'$} \def\cprime{$'$} \def\cprime{$'$}
  \def\Dbar{\leavevmode\lower.6ex\hbox to 0pt{\hskip-.23ex \accent"16\hss}D}
  \def\cprime{$'$} \def\cprime{$'$}
\providecommand{\bysame}{\leavevmode\hbox to3em{\hrulefill}\thinspace}
\providecommand{\MR}{\relax\ifhmode\unskip\space\fi MR }
\providecommand{\MRhref}[2]{%
  \href{http://www.ams.org/mathscinet-getitem?mr=#1}{#2}
}
\providecommand{\href}[2]{#2}

\end{document}